\newtheorem{thm}{Theorem}
\newtheorem{lem}[thm]{Lemma}
\newtheorem{prop}[thm]{Proposition}
\newtheorem{algo}{Algorithm}
\theoremstyle{definition}
\newtheorem{rmk}[thm]{Remark}
\newcommand{\norm}[1]{\left\lVert #1 \right\rVert}
\newcommand{\normsq}[1]{\norm{#1}^{2}}
\newcommand{\gra}[1][]{\operatorname{gra}}
\newcommand{\minus}{\scalebox{0.75}[1.0]{$-$}}
\setlist[enumerate]{label=$\rm{(\roman*)}$,leftmargin=\parindent}
\numberwithin{equation}{section}
\numberwithin{thm}{section}
\numberwithin{table}{section}
\numberwithin{figure}{section}
\newcommand{\R}{\mathbb{R}}
\newcommand{\cX}{\mathcal{X}}
\newcommand{\cY}{\mathcal{Y}}
\newcommand{\cH}{\mathcal{H}}
\newcommand{\cG}{\mathcal{G}}
\newcommand{\cK}{\mathcal{K}}
\newcommand{\cE}{\mathcal{E}}
\newcommand{\prox}{\mathrm{prox}}
\newcommand{\Id}{\mathrm{Id}}
\newcommand{\bO}{\mathcal{O}}
\newcommand{\Lag}{\mathcal{L}}
\newcommand{\zer}{{\rm Zer}}
\newcommand{\gr}{{\rm Graph}}
\newcommand{\dist}{{\rm dist}}
\title{Fast Reflected Forward-Backward algorithm: achieving fast convergence rates for convex optimization with linear cone
constraints}
\author{Radu Ioan Bo\c{t}\footnote{Faculty of Mathematics, University of Vienna, Oskar-Morgenstern-Platz 1, 1090 Vienna, Austria, e-mail: \url{radu.bot@univie.ac.at}. Research partially supported by Austrian Science Fund (FWF), projects W 1260-N35 and P 34922-N.}
\and Dang-Khoa Nguyen \footnote{Faculty of Mathematics and Computer Science, University of Science, Ho Chi Minh City 700000, Vietnam, e-mail: \url{ndkhoa@hcmus.edu.vn}.} \footnote{Vietnam National University, Ho Chi Minh City 700000, Vietnam}
\and Chunxiang Zong\footnote{Department of Mathematics, Northwest Normal University, Lanzhou 730070, People's Republic of China, e-mail: \url{zongchx@nwnu.edu.cn}.  Research supported by Outstanding Youth Science Fund of Gansu Province (No.  25JRRA001).}}
\begin{document}
	
\maketitle	

\begin{abstract}
In this paper, we derive a Fast Reflected Forward-Backward (Fast RFB) algorithm to solve the problem of finding a zero of the sum of a maximally monotone operator and a monotone and Lipschitz continuous operator in a real Hilbert space. Our approach extends the class of reflected forward-backward methods by introducing a Nesterov momentum term and a correction term, resulting in enhanced convergence performance.
The iterative sequence of the proposed algorithm is proven to converge weakly, and the Fast RFB algorithm demonstrates impressive convergence rates, achieving $o\left( \frac{1}{k} \right)$ as $k \to +\infty$ for both the discrete velocity and the tangent residual at the \emph{last-iterate}.
When applied to minimax problems with a smooth coupling term and nonsmooth convex regularizers, the resulting algorithm demonstrates significantly improved convergence properties compared to the current state of the art in the literature.  For convex optimization problems with linear cone constraints, our approach yields a fully splitting primal-dual algorithm that ensures not only the convergence of iterates to a primal-dual solution, but also a \emph{last-iterate} convergence rate of $o\left( \frac{1}{k} \right)$ as $k \to +\infty$ for the objective function value, feasibility measure, and complementarity condition. This represents the most competitive theoretical result currently known for algorithms addressing this class of optimization problems. Numerical experiments are performed to illustrate the convergence behavior of Fast RFB.
\end{abstract}	

\noindent \textbf{Key Words.} monotone inclusion,
reflected forward-backward splitting algorithm,
Nesterov momentum,
Lyapunov analysis,
fast convergence rates,
convergence of the iterates, saddle point problem,
fast primal-dual algorithm
\vspace{1ex}

\noindent \textbf{AMS subject classification.} 49M29, 65K05, 68Q25, 90C25, 90C47
	
\section{Introduction}\label{sec1}	

\subsection{Problem formulation}\label{subsec11}

In recent years, there has been a significant surge in research on minimax problems, primarily driven by their emerging applications in machine learning and robust optimization. Notable instances include generative adversarial networks (GANs) \cite{Arjovsky2017Chintala,Goodfellow2014Pouget}, which use minimax frameworks to improve data generation and adversarial training methods, and distributionally robust optimization \cite{Levy2020Carmon,Lin2022Fang}, which employs minimax principles to ensure model performance under varying distributional shifts. Beyond these areas, minimax problems have also found applications in online learning \cite{Bhatia2020Sridharan}, where they help develop algorithms that adapt to dynamic environments, and in reinforcement learning \cite{Azar2017Osband,Dai2018Shaw}, contributing to more efficient decision-making processes. This highlights the versatility and fundamental significance of minimax approaches across a wide range of domains.

Consider a minimax problem of the form
\begin{equation}\label{minmax}
\min_{x\in \cX}\max_{\lambda \in \cY} \Psi \left( x , \lambda \right) \coloneqq f(x) + \Phi\left( x , \lambda \right) - g(\lambda),
\end{equation}
where $\cX$ and $ \cY$ are two real Hilbert spaces, $f \colon \cX \to \R \cup \left\lbrace + \infty \right\rbrace$ and $g \colon \cY \to \R \cup \left\lbrace + \infty \right\rbrace$ are proper, convex, and lower semicontinuous functions, and
$\Phi \colon \cX \times \cY \rightarrow \R$ is a convex-concave and differentiable coupling function with Lipschitz continuous gradient. We are interested in finding saddle points of $\Psi$, which are pairs $\left( x_{*} , \lambda_{*} \right) \in  \cX \times \cY$ fulfilling
\begin{equation*}
\Psi(x_{*},\lambda) \leq \Psi\left( x_{*} , \lambda_{*} \right) \leq \Psi(x,\lambda_{*}) \textrm{ for every } \left( x , \lambda \right)\in  \cX \times \cY.
\end{equation*}
The minimax setting \eqref{minmax} is highly versatile, providing a general framework for studying a wide range of problems, including unconstrained composite convex minimization, mixed variational inequalities, and constrained convex minimization problems, see \cite{Condat2013,Lin2018Nadarajah,Chambolle2016Pock,Nesterov2013,Cai2009Osher,Tran_Dinh2019,Goldstein2014,Xu2017}.

An element $\left( x_{*} , \lambda_{*} \right)\in \cX \times \cY$ is a saddle point of \eqref{minmax} if and only if it is a solution of the system of optimality conditions
\begin{equation}\label{minimax:optcond}
\begin{pmatrix}0 \\ 0 \end{pmatrix} \in \begin{pmatrix} \partial f(x) \\ \partial g(\lambda) \end{pmatrix} + \begin{pmatrix} \nabla_{x} \Phi \left(x, \lambda\right) \\ -\nabla_{\lambda} \Phi \left(x, \lambda\right) \end{pmatrix},
\end{equation}
where $\partial f : \cX \rightarrow 2^{\cX} $ and $\partial g : \cY \rightarrow 2^{\cY}$ denote the convex subdifferentials of $f$ and $g$, respectively.

This motivates us to develop solution methods for solving the following monotone inclusion problem
\begin{equation}\label{intro:pb:eq}
0\in M(z) + F(z),
\end{equation}
where $\cH$ is a real Hilbert space, $M \colon \cH \rightarrow 2^{\cH}$ is a (possibly set-valued) maximally monotone operator and $F \colon \cH \rightarrow \cH$ is a single-valued monotone and $L$-Lipschitz continuous operator. We assume $\zer(M + F):=\{z \in \cH : 0 \in  M(z) + F(z)\} \neq \emptyset$.

The graph of a set-valued operator $M \colon \cH \rightarrow 2^{\cH}$ is defined as $\gr(M):=\{(z,u) \in \cH \times \cH : u \in M(z)\}.$ The operator $M$ is said to be monotone if $\langle v-u, y-z \rangle \geq 0$ for all $(z,u), (y,v) \in \gr(M)$. A monotone operator $M \colon \cH \rightarrow 2^{\cH}$ is said to be maximal monotone if there exists no other monotone operator $M' \colon \cH \rightarrow 2^{\cH}$ such that $\gr(M) \subsetneq \gr(M')$. The convex subdifferential of a proper, convex and lower semicontinuous function defined on a real Hilbert space is a maximally monotone operator \cite{Bot2010, BauschkeCombettes2}.
For $\cH := \cX \times \cY$, and
\begin{align}
& M \colon \cH \rightarrow 2^{\cH},  \ \left( x , \lambda \right) \mapsto \big(\partial f \left( x \right), \partial g\left( \lambda\right)\big), \label{op:A} \\
& F \colon \cH \rightarrow \cH, \ \left( x , \lambda \right) \mapsto (\nabla_{x} \Phi \left(x, \lambda\right),-\nabla_{\lambda} \Phi \left(x, \lambda\right)),\label{op:F}
\end{align}
both maximally monotone operators, the system of optimality conditions \eqref{minimax:optcond} reduces to the monotone inclusion problem \eqref{intro:pb:eq}.

If $M:=N_C$ with $C \subseteq \cH$ being a nonempty convex and closed set, \eqref{intro:pb:eq} becomes
\begin{equation}\label{Problem1}
0\in N_{C}(z) + F(z).
\end{equation}
This nothing else than the variational inequality problem
\begin{equation}
\textnormal{find}\,\,\, z\in C\,\,\, \textnormal{such that}\,\,\, \langle F(z), u-z\rangle\geq 0 \,\,\,\textnormal{for all}\,\,\, u\in C,\nonumber
\end{equation}
that has been extensively studied in the literature -- see, for instance, \cite{Diakonikolas2020,Wei2021Lee,Golowich2020aPattathil,Golowich2020bPattathil,Gorbunov2021Loizou,Sedlmayer2023Nguyen}.

In the following, we will review several methods for solving monotone inclusions of the form \eqref{intro:pb:eq}, as well as saddle point problems of the form \eqref{minmax}.

\subsection{Numerical methods for monotone inclusions with monotone and Lipschitz continuous operators}\label{subsec12}

In this subsection, we will survey the most prominent numerical methods for solving \eqref{intro:pb:eq}, excluding algorithms that rely on  $F$ being cocoercive and thus belong to the framework of the classical Forward-Backward (FB) method.

The Extragradient (EG) method was introduced by Korpelevich \cite{Korpelevich1976} and Antipin \cite{Antipin1976} and is one of the first and most famous algorithms for solving \eqref{minimax:optcond}. Based on the EG method, Tran-Dinh \cite{Tran-Dinh2023} has recently developed the following algorithm to solve \eqref{intro:pb:eq}
\begin{equation}\
(\forall k \geq 0) \ \left\{
\begin{aligned}\label{algo:EG}
		w_{k} & =  J_{\frac{\gamma}{\eta}M} \left( z_{k} - \frac{\gamma}{\eta} F(z_{k}) \right), \\
		z_{k+1} & =  J_{\gamma M} \left( z_{k} - \gamma F(w_{k}) \right),
\end{aligned}\right.
\end{equation}
where $J_{\gamma M} \coloneq \left( \Id + \gamma M \right) ^{-1} : \cH \to \cH$ denotes the resolvent of $M$ with parameter $\gamma >0$, which plays the role of a step size, and $\eta$ is a scaling factor. The scaling factor $\eta$ allows to provide a unified framework for different methods. For example, the classical EG method is obtained from  \eqref{algo:EG} for $\eta = 1$, $M$ given by \eqref{op:A}, with $f$ and $g$ the indicator functions of two nonempty, convex and closed subsets of $\cX$ and $\cY$, respectively, $F$ given by \eqref{op:F}, and the step size required to satisfy $0<\gamma<\frac{1}{L}$ (see, also, \cite{Facchinei2003Pang}). Assuming that $M$ is maximally 3-cyclically monotone, the author demonstrated convergence for the iterates generated by \eqref{algo:EG}, and that the tangent residual achieves best-iterate and last-iterate convergence rates of $\bO \left( \frac{1}{\sqrt{k}} \right)$ as $k \to +\infty$.

By tangent residual we  mean the quantity
\begin{equation*}
r_{tan}(z) \coloneq \dist(0, M(z) + F(z)) = \inf_{\xi\in M(z)} \left\lVert \xi+F(z) \right\rVert.
\end{equation*}
Since \eqref{intro:pb:eq} can be rewritten as the following fixed point problem
\begin{equation*}
z = J_{\gamma M} \left( z - \gamma F \left( z \right) \right),
\end{equation*}
another widely used measure of optimality is the so-called fixed-point residual
\begin{equation*}
r_{fix}(z) \coloneq \left\lVert z - J_{\gamma M} \left( z - \gamma F \left( z \right) \right) \right\rVert.
\end{equation*}
However, the derivation of convergence rates in terms of the tangent residual is more desirable. This is not only because it gives an upper bound on the fixed point residual, i.e. (see, for instance, \cite{Cai2022Zheng})
\begin{equation}\label{tanfixres}
0 \leq r_{fix}(z) \leq  r_{tan}(z) \quad \forall z \in \cH,
\end{equation}
but also because it allows the convergence rates to be transferred to function values when applied to convex optimization problems and minimax problems such as \eqref{minmax}.

In order to reduce the computational cost of the EG method caused by evaluating the operator $F$ at two different points in each iteration, Popov introduced in \cite{Popov1980} the Optimistic Gradient Descent Ascent (OGDA) method, which requires only one evaluation of the operator per iteration. Its extension to solving \eqref{intro:pb:eq} provided in \cite{Tran-Dinh2023} is as follows
\begin{equation}\
(\forall k \geq 1) \ \left\{
\begin{aligned}\label{algo:OGDA}
		w_{k} & =  J_{\frac{\gamma}{\eta}M} \left( z_{k} - \frac{\gamma}{\eta} F(w_{k-1}) \right), \\
		z_{k+1} & =  J_{\gamma M} \left( z_{k} - \gamma F(w_{k}) \right),
\end{aligned}\right.
\end{equation}
and differs from \eqref{algo:EG} in that in the first block $F(w_{k-1})$ replaces $F(z_k)$, see also \cite{Bohm2023}.
The classical OGDA method is obtained from \eqref{algo:OGDA} in the same way as described above, as is the EG method from \eqref{algo:EG}, but with the step size required to satisfy $0<\gamma<\frac{1}{2L}$.  Assuming that $M$ is maximally 3-cyclically monotone, in \cite{Tran-Dinh2023}, convergence for the iterates generated by \eqref{algo:OGDA}, and that the tangent residual achieves best-iterate and last-iterate convergence rates of $\bO \left( \frac{1}{\sqrt{k}} \right)$ as $k \to +\infty$ are shown.

Both the EG and OGDA methods require two evaluations of the resolvent $J_{\gamma M}$ per iteration. To address this computational demand, Tseng~\cite{Tseng2000} proposed the Forward-Backward-Forward (FBF) method, inspired by the EG framework. The FBF method reduces the per-iteration complexity by requiring only a single evaluation of $J_{\gamma M}$ per iteration, making it particularly advantageous when $J_{\gamma M}$ is expensive to compute. Its iterative scheme is as follows
\begin{equation}\
(\forall k \geq 0) \  \left\{
\begin{aligned}\label{algo:Tseng}
		w_{k} & = J_{\gamma M} \left( z_{k} - \gamma F(z_{k}) \right), \\
		z_{k+1} & = w_{k} - \gamma F(w_{k}) + \gamma F(z_{k}),
\end{aligned}\right.
\end{equation}
and generates a sequence $(z_k)_{k \geq 0}$ that converges to a solution of \eqref{intro:pb:eq} for $ 0 < \gamma < \frac{1}{L} $. If $M=0$, the FBF method reduces to the classical EG method \cite{Korpelevich1976}. For cases where $M$ represents the convex subdifferential of a proper, convex and lower semicontinuous function, an ergodic convergence rate of $\bO \left( \frac{1}{k} \right)$ for the restricted gap function as $k \to +\infty$ was established in \cite{Bohm2022Sedlmayer}. Furthermore, the best-iterate convergence rate of $\bO \left( \frac{1}{\sqrt{k}} \right)$ for the tangent residual as $k \to +\infty$ has been shown in two distinct contexts: using a potential function approach for the star-co-monotone case in \cite{Luo2022Tran-Dinh}, and through an alternative method in \cite{Tran-Dinh2023} that leverages results from \cite{Facchinei2003Pang} combined with the concept of star co-hypomonotonicity.

By replacing $F(z_{k})$ with $F(w_{k-1})$ in \eqref{algo:Tseng}, the following Past Forward-Backward-Forward (PFBF) method is obtained
\begin{equation}\
(\forall k \geq 1) \ \left\{
\begin{aligned}\label{algo:PFBF}
		w_{k} & = J_{\gamma M} \left( z_{k} - \gamma F(w_{k-1}) \right), \\
		z_{k+1} & = w_{k} - \gamma F(w_{k}) + \gamma F(w_{k-1}),
\end{aligned}\right.
\end{equation}
which requires only one forward estimation per iteration. For this method, the best-iterate convergence rate of $\bO \left( \frac{1}{\sqrt{k}} \right)$ for the tangent residual as $k \to +\infty$ was established in \cite{Luo2022Tran-Dinh} under the assumption that $M$ is star-co-monotone. By interchanging the roles of $w_{k}$ of $z_{k}$ in \eqref{algo:PFBF} and simplifying to a single sequence, one arrives at the Forward-Reflected-Backward (FRB) method proposed by Malitsky and Tam in \cite{Malitsky2020Tam}, described as
\begin{equation}\label{algo:Malitsky-Tam}
	(\forall k \geq 1) \ z_{k+1} = J_{\gamma M} \left( z_{k} - 2 \gamma F(z_{k}) + \gamma F(z_{k-1}) \right),
\end{equation}
which converges to a solution of \eqref{intro:pb:eq} provided $0 < \gamma < \frac{1}{2L}$. This iterative scheme can also be derived from \eqref{algo:Tseng} by reusing $F(w_{k-1})$ instead of $F(z_{k})$ in the first line, similar to how the OGDA method is derived from the EG method. For variational inequalities,  \cite{Cai2022Zheng} demonstrated that the FRB method achieves a last-iterate convergence rate of $\bO \left( \frac{1}{\sqrt{k}} \right)$ for the tangent residual as $k \to +\infty$.

In \cite{Cevher2020Vu}, Cevher and V\~{u} proposed the following Reflected Forward-Backward (RFB) method
\begin{equation}\label{algo:Malitsky}
			(\forall k \geq 1) \ z_{k+1} = J_{\gamma M} \left( z_{k} - \gamma F(2 z_{k} - z_{k-1}) \right),
\end{equation}
which converges to a solution of \eqref{intro:pb:eq} provided $0 < \gamma < \frac{\sqrt{2}-1}{L} $. In \eqref{algo:Malitsky}, the evaluation of $F$ through a second forward step is circumvented by using a suitable linear combination of the iterates. If $F$ is linear, \eqref{algo:Malitsky} is equivalent to \eqref{algo:Malitsky-Tam}. In \cite{Tran-Dinh2023}, best-iterate and last-iterate convergence rates of $\bO \left( \frac{1}{\sqrt{k}} \right)$ as $k \to +\infty$ for the tangent residual were established. For the variational inequality problem \eqref{Problem1}, \eqref{algo:Malitsky} reduces to the Projected Reflected Gradient (PRG) method introduced in \cite{Malitsky2015}. Last-iterate convergence rates for the PRG were provided in \cite{Cai2022Zheng}.

In recent years, there has been significant interest in the development of numerical methods with fast convergence properties for solving monotone inclusions. Using the performance estimation problem framework, Kim introduced in \cite{Kim2021} an Accelerated Proximal Point (APP) method for solving \eqref{intro:pb:eq} in the special case where $F \equiv 0$. This method achieves a convergence rate of  $\bO \left( \frac{1}{k} \right)$ as $k +\infty$ for the fixed-point residual, thereby outperforming the classical Proximal Point method \cite{BauschkeCombettes2}. For problems without a set-valued operator ($M \equiv 0$ in \eqref{intro:pb:eq}), a Fast Optimistic Gradient Descent Ascent (Fast OGDA) method was proposed in \cite{Bot2023RobertNguyen}. This method, derived as a discretization of a fast-converging continuous time model, not only ensures the convergence of the iterates but also achieves a last-iterate convergence rate of $o \left( \frac{1}{k} \right)$ as $k \to +\infty$ for the operator norm of $F$.

The Extra-Anchored Gradient (EAG) method, inspired by Halpern iteration \cite{Halpern1967}, has been developed to address \eqref{intro:pb:eq}. This algorithm achieves a last-iterate convergence rate of $\bO  \left( \frac{1}{k} \right)$  as $k \to +\infty$ for the tangent residual when $M$ is maximally 3-cyclically monotone \cite{Tran-Dinh2023}. The method extends an earlier algorithm proposed by Yoon and Ryu in \cite{Yoon2021Ryu}. Building on these ideas, the Past Extra-Anchored Gradient (PEAG) method was introduced in \cite{Tran-Dinh2024}, leveraging concepts from \cite{Tran-Dinh2021Luo} and the Optimistic Gradient Descent Ascent (OGDA) method \cite{Popov1980}. The PEAG method is designed to solve \eqref{intro:pb:eq} under the assumption that $M+F$ is co-hypomonotone. It also guarantees a last-iterate convergence rate of $\bO \left( \frac{1}{k} \right)$ for the tangent residual as $k \to +\infty$.

Also building on Halpern iteration \cite{Halpern1967}, Cai and Zheng proposed in \cite{Cai2022Zheng} an Accelerated Reflected Gradient (ARG) method. This method, which can be viewed as an acceleration of the RFB method, addresses the monotone inclusion \eqref{intro:pb:eq} under the assumptions that $M$ is maximally monotone, $F$ is Lipschitz continuous, and $M+F$ is negatively comonotone. The ARG method achieves a convergence rate of $\bO \left( \frac{1}{k} \right)$ as $k \to +\infty$ for the tangent residual.

Additional fast methods that exploit either Nesterov momentum \cite{Nesterov2004} or Halpern iteration can be found in \cite{Cai2022Oikonomou,Tran-Dinh2024,Mainge2021,Mainge2023Weng-Law}. A more detailed discussion of these approaches, along with comparisons to our proposed method, will be presented in the next section.

\subsection{Numerical methods for saddle point problems with convex regularizes}\label{subsec13}

In the literature, several attempts have been made to solve the saddle point problem \eqref{minmax} directly, without relying on the more general formulation \eqref{intro:pb:eq}.

A particularly well-studied instance of this problem involves a bilinear coupling term, namely, $\Phi(x, \lambda):= \langle Ax, \lambda \rangle$, where $A\colon\cX\rightarrow \cY$ is a linear continuous operator. In this case, the problem \eqref{minmax} reduces to
\begin{equation}\label{minmax_linear}
\min_{x \in \cX}\max_{\lambda \in \cY} f(x) + \langle Ax, \lambda \rangle - g(\lambda).
\end{equation}
To address \eqref{minmax_linear}, a primal-dual approach was first proposed in \cite{Arrow1958Hurwicz} and further developed in \cite{Zhu2008Chan}, with convergence properties guaranteed under the assumption that $f$ is strongly convex. Later, Chambolle and Pock \cite{Chambolle2011Pock} introduced a fully splitting primal-dual algorithm to solve \eqref{minmax_linear} in the finite-dimensional setting. They demonstrated that the sequence of iterates $\left( x_{k} , \lambda_{k} \right) _{k\geq 0}$ converges to a saddle point of \eqref{minmax_linear}, and they established an ergodic convergence rate of $\bO \left(\frac{1}{k}\right)$ as $k \to +\infty$ for the so-called restricted primal-dual gap. When $f$ is strongly convex, an accelerated version of this primal-dual algorithm achieves an improved ergodic convergence rate of  $\bO \left(\frac{1}{k^2}\right)$ as $k \to +\infty$ again with respect to the restricted primal-dual gap \cite{Chambolle2016Pock}. The Chambolle-Pock algorithm has since inspired a variety of primal-dual methods for solving \eqref{minmax_linear}, including those proposed in \cite{Davis2015,Chambolle2016Pock,Jiang2021Cai,Tran-Dinh2021A,Tran-Dinh2020Zhu,Valkonen2020,Tran-Dinh2018Fercoq}, to name just a few.

Compared to the bilinear case, the study of \eqref{minmax} in its general form has been less extensive. Nemirovski and Juditsky \cite{Nemirovski2004,Juditsky2011Nemirovski} introduced the Mirror-Prox method to address \eqref{minmax} in the absence of regularizers  ($f=g=0$) achieving an ergodic convergence rate of $\bO  \left( \frac{1}{k} \right)$ as $k \to +\infty$ for the restricted gap function. This method was later extended in \cite{He2015Juditsky} to handle convex regularizers using Bregman distances, while maintaining the same ergodic convergence rate. Hamedani and Aybat \cite{Hamedani2021Aybat} proposed an Accelerated Primal-Dual (APD) algorithm incorporating a Nesterov momentum term, which generalizes the Chambolle-Pock approach \cite{Chambolle2011Pock} to the saddle point problem \eqref{minmax}. The APD algorithm is defined as
\begin{equation*}\
(\forall k \geq 1) \ \left\{
\begin{aligned}
	s_{k}      & = (1+ \theta_k)\nabla_{\lambda} \Phi (x_k,\lambda_k)-\theta_k \nabla_{\lambda} \Phi (x_{k-1},\lambda_{k-1}),\\
    \lambda_{k+1}    & = \prox_{\sigma_k g} \left( \lambda_{k} + \sigma_k s_k \right), \\
	x_{k+1}    & = \prox_{\tau_k f} \left( x_{k} - \tau_k \nabla_{x} \Phi (x_k,\lambda_{k+1})\right).
\end{aligned}\right.
\end{equation*}
Under suitable conditions on the parameter sequence $\left( \tau_{k} , \sigma_{k} \right) _{k \geq 0}$, the authors proved that the iterates converge to a saddle point. Moreover, the ergodic sequence $\left( \bar{x}_{k} , \bar{\lambda}_{k} \right)_{k\geq 0}$ satisfied $\Psi\left( \bar{x}_{k},\lambda_{*} \right) -\Psi\left( x_{*},\bar{\lambda}_{k} \right) \to 0$ with a convergence rate of $\bO \left( \frac{1}{k} \right)$ in the general convex setting and of $\bO  \left( \frac{1}{k^2} \right)$ as $k \to +\infty$ when $\Phi(x,\cdot)$ is linear for each fixed $x$ and $f$ is strongly convex.

Recently, Chang, Yang, and Zhang \cite{Chang2024Yang} introduced an enhancement of the APD algorithm by employing adaptive linesearch techniques, which only assume local Lipschitz continuity for $\nabla_x \Phi$ and $\nabla_\lambda \Phi$. This enhanced method retains similar convergence and convergence rate properties as the original APD algorithm.

\subsection{Our contribution}\label{subsec14}

This paper introduces an accelerated first-order method for solving the monotone inclusion problem \eqref{intro:pb:eq}. The proposed approach ensures the weak convergence of iterates to a solution of \eqref{intro:pb:eq} and achieves last-iterate convergence rates of $o \left( \frac{1}{k} \right)$ as $k \to +\infty$ for both the discrete velocity and the tangent residual. We demonstrate the versatility of the algorithm by applying it to the minimax problem \eqref{minmax} and convex optimization problems with linear cone constraints. Finally, we validate the theoretical results and explore the impact of algorithm parameters through comprehensive numerical experiments.

The contributions of the paper are as follows:

\begin{itemize}
\item We propose a Fast Reflected Forward-Backward (Fast RFB) algorithm, which incorporates a Nesterov momentum term and a correction term, for solving the monotone inclusion problem \eqref{intro:pb:eq}. This method requires only a single operator evaluation and one resolvent computation $J_{\gamma M}$ per iteration. The iterative sequence $(z_k)_{k \geq 0}$ generated by the algorithm weakly converges to a solution of \eqref{intro:pb:eq}. Furthermore, the Fast RFB algorithm achieves a last-iterate convergence rate of $o \left( \frac{1}{k} \right)$ as $k \rightarrow +\infty$ for both the discrete velocity $\left\lVert z_k - z_{k-1} \right\rVert$ and the tangent residual $r_{\mathrm{tan}} \left( z_k \right) = \dist (0, M(z_k) + F(z_k)) = \inf_{\xi\in M(z_k)} \left\lVert \xi + F \left( z_k \right) \right\rVert$.

\item Building on the Fast RFB method, we develop a primal-dual full-splitting algorithm for solving the saddle point problem \eqref{minmax}. The proposed algorithm ensures the weak convergence of the sequence of primal-dual iterates $\left(x_{k},\lambda_{k} \right)_{k \geq 0}$ to a saddle point. Additionally, it achieves last-iterate convergence rates of  $o \left( \frac{1}{k} \right)$ as $k \to +\infty$ or the discrete primal and dual velocities, the tangent residual, and the primal-dual gap.

\item As a particular instance of the saddle problem \eqref{minmax}, we apply the proposed primal-dual full splitting algorithm to solve optimization problems of the form
\begin{equation}
\begin{aligned}\label{convexcone1}
\min \ & f \left( x \right)+h \left( x \right),\\
\textrm{subject to} \	& Ax - b \in -\mathcal{K},
\end{aligned}
\end{equation}
where $\cX$ and $\cY$ are real Hilbert spaces, $\mathcal{K}$ is a nonempty, convex and closed cone in $\cY$, $f \colon \cX \rightarrow \R \cup \left\lbrace + \infty \right\rbrace$ is a proper, convex, and lower semicontinuous function, $h \colon \cX \rightarrow \R$ is a convex and differentiable function such that $\nabla h$ is $L_{\nabla h}$-Lipschitz continuous, and $A \colon \cX\rightarrow \cY$ is a linear continuous operator.

We generate a sequence  $\left(x_{k},\lambda_{k} \right)_{k \geq 0}$ of primal-dual iterates which converges weakly to a primal-dual solution of \eqref{convexcone1}. In addition, we achieve as $k \to +\infty$ convergence rates for the velocities
\begin{equation*}
\left\lVert x_{k} - x_{k-1} \right\rVert = o \left( \dfrac{1}{k} \right)	
\qquad \textrm{ and } \qquad
\left\lVert \lambda_{k} - \lambda_{k-1} \right\rVert = o \left( \dfrac{1}{k} \right),
\end{equation*}
the tangent residual
\begin{equation*}
\left\lVert u_k + \nabla h(x_k) + A^{*} \lambda_k \right\rVert = o \left( \dfrac{1}{k} \right)
\qquad \textrm{ and } \qquad
\left\lVert v_k - Ax_{k} + b \right\rVert = o \left( \dfrac{1}{k} \right),
\end{equation*}
where $u_k \in \partial f(x_k)$ and $v_k \in N_{{\cal K}^*}(\lambda_k)$, for all $k \geq 0$, the primal-dual gap
\begin{equation*}
{\cal L} \left( x_{k} , \lambda_{*} \right)- {\cal L} \left( x_{*} , \lambda_{k} \right) = o \left( \dfrac{1}{k} \right),
\end{equation*}
the complementarity condition
\begin{equation*}
|\langle \lambda_k, Ax_{k} - b \rangle| = o \left( \dfrac{1}{k} \right),
\end{equation*}
and the objective function values
\begin{equation*}
\left\vert \left( f+h \right) \left( x_{k} \right) - \left( f+h \right) \left( x_{*} \right) \right\rvert = o \left( \dfrac{1}{k} \right).
\end{equation*}
Here, ${\cal L}$ denotes the Lagrangian attached to \eqref{convexcone1}, and $\left( x_{*} , \lambda_{*} \right)$ a primal-dual optimal solution.

\item The approach we consider involves the minimization of the sum of a nonsmooth convex and a smooth convex function subject to linear equality constraints. While there is an extensive body of work on full splitting primal-dual methods for solving this class of problems, few results exist regarding fast-converging methods in terms of objective function values and feasibility measures, while also ensuring the convergence of the iterates. Our approach contributes to filling this gap.
\end{itemize}

\section{A Fast Reflected Forward-Backward algorithm for monotone inclusions}\label{sec2}

\subsection{The Fast RFB algorithm}\label{subsec21}

In this section, we formulate the algorithm and conduct a thorough analysis of its convergence.

\begin{mdframed}
\begin{algo}
\label{algo:im}
Let
$$\alpha>2, \quad\frac{\alpha}{2}< c <\alpha - 1 ,
 \quad \textnormal{and}\quad 0<\gamma <\frac{1}{2L}.$$
For initial points $z_0, y_{0}, w_{0} \in \cH$, and $z_{1} = J_{\gamma M} \left( y_{0} - \gamma F \left( w_{0} \right)\right)$, we set
\begin{equation}
\label{algo:z-y}
(\forall k \geq  1) \ \left \{\begin{aligned}
y_{k}
& = z_{k} + \left( 1 - \dfrac{\alpha}{k + \alpha} \right) \left( z_{k} - z_{k-1} \right) + \left( 1 - \dfrac{c}{  k + \alpha } \right) \left( y_{k-1} - z_{k} \right),  \\
w_{k}
&= z_{k} + \left( y_{k} - y_{k-1} \right),\\
z_{k+1}
& = J_{\gamma M} \left( y_{k} -\gamma F \left( w_{k} \right) \right).
\end{aligned} \right.
\end{equation}
\end{algo}
\end{mdframed}

In the following we give an equivalent formulation for Algorithm \ref{algo:im}, which will play a central role in the convergence analysis.

\begin{prop}
\label{rmk:sub}
Let $z_{0}, y_{0}, w_{0} \in \cH$, $z_{1} = J_{\gamma M} \left( y_{0}  -\gamma F \left( w_{0} \right)\right)$, and $\xi_{1} = \frac{1}{\gamma} \left( y_{0} - z_{1} \right) - F \left( w_{0} \right)\in M \left( z_{1} \right)$. Then the sequence $(z_k)_{k \geq 0}$ generated in Algorithm \ref{algo:im} can also be generated equivalently by the following iterative scheme
\begin{equation}
\label{algo:z-xi}
(\forall k \geq  1) \ \left \{\begin{aligned}
w_{k} 		& = z_{k} + \left( 1 - \dfrac{\alpha}{k + \alpha} \right) \left( z_{k} - z_{k-1} \right) - \frac{c}{k+\alpha}  \gamma \big(\xi_k  + F \left( w_{k-1} \right)\big),  \\
z_{k+1} 	& =  J_{\gamma M} \big( w_{k} - \gamma F \left( w_{k} \right) + \gamma \left(F \left( w_{k-1} \right) + \xi_{k}\right) \big),\\
\xi_{k+1}	& = \frac{1}{\gamma}\left( w_{k}-z_{k+1}\right)  - F \left( w_{k} \right) + F \left( w_{k-1} \right) + \xi_{k}.
\end{aligned} \right.
\end{equation}
In addition, it holds
\begin{equation}
\label{algo:inc-M}
\xi_{k}\in M \left( z_{k} \right) \qquad \quad \forall k\geq 1.
\end{equation}
\end{prop}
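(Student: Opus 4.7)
The plan is to proceed by induction on $k$, using as a bridge between the two formulations the explicit definition
\[
\xi_{k} \coloneq \frac{1}{\gamma}\bigl(y_{k-1}-z_{k}\bigr) - F(w_{k-1}) \qquad (k \geq 1).
\]
The induction hypothesis I would carry is twofold: (a) $z_{k} = J_{\gamma M}\bigl(y_{k-1}-\gamma F(w_{k-1})\bigr)$, and (b) $\xi_{k}\in M(z_{k})$. The base case $k=1$ is given by the initialization; (b) follows from (a) and the resolvent identity, namely $\bigl(\Id + \gamma M\bigr)(z_{k})\ni y_{k-1}-\gamma F(w_{k-1})$ rearranges directly into $\xi_{k}\in M(z_{k})$, establishing \eqref{algo:inc-M}. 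The inductive step for (a) will be produced by showing that the identity $z_{k+1}=J_{\gamma M}(y_{k}-\gamma F(w_{k}))$ from the last line of \eqref{algo:z-y} can be rewritten as the second line of \eqref{algo:z-xi}, so (a) propagates to index $k+1$.

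The derivation of the three lines of \eqref{algo:z-xi} is then pure bookkeeping. For the first line, I rewrite $y_{k-1}-z_{k} = \gamma\bigl(\xi_{k}+F(w_{k-1})\bigr)$ by the definition of $\xi_{k}$, and combine this with $w_{k}-z_{k} = y_{k}-y_{k-1} = (y_{k}-z_{k})-(y_{k-1}-z_{k})$ after substituting the Nesterov-type update for $y_{k}-z_{k}$; the coefficient of $(y_{k-1}-z_{k})$ collapses from $\bigl(1-\tfrac{c}{k+\alpha}\bigr)-1 = -\tfrac{c}{k+\alpha}$, which yields exactly the first line of \eqref{algo:z-xi}. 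For the second line, I use $y_{k} = w_{k} + (y_{k-1}-z_{k}) = w_{k}+\gamma\bigl(\xi_{k}+F(w_{k-1})\bigr)$ to transform
\[
y_{k}-\gamma F(w_{k}) = w_{k}-\gamma F(w_{k})+\gamma\bigl(F(w_{k-1})+\xi_{k}\bigr),
\]
so applying $J_{\gamma M}$ on both sides gives the second line of \eqref{algo:z-xi} and simultaneously provides the inductive step for (a). For the third line, I split $y_{k}-z_{k+1} = (w_{k}-z_{k+1})+(y_{k}-w_{k}) = (w_{k}-z_{k+1})+\gamma\bigl(\xi_{k}+F(w_{k-1})\bigr)$ inside $\xi_{k+1}=\frac{1}{\gamma}(y_{k}-z_{k+1})-F(w_{k})$.

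I do not anticipate a real obstacle: the proposition is an equivalence between two forms of the same recursion, so everything reduces to consistent substitution of $y_{k-1}-z_{k}$ by $\gamma(\xi_{k}+F(w_{k-1}))$ and of $y_{k}-w_{k}$ by the same quantity (which follows from $w_{k}=z_{k}+y_{k}-y_{k-1}$). The only point requiring care is to keep the induction on (a) aligned with the step from $k$ to $k+1$, so that the formula $\xi_{k+1}=\frac{1}{\gamma}(y_{k}-z_{k+1})-F(w_{k})$ used to re-enter the loop is legitimate, which is precisely what the third line of \eqref{algo:z-y} guarantees.
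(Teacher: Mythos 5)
Your proposal is correct and follows essentially the same route as the paper: both arguments hinge on reading $\xi_{k}\in M(z_{k})$ off the resolvent step and then substituting $y_{k-1}-z_{k}=\gamma\bigl(\xi_{k}+F(w_{k-1})\bigr)$ consistently into the updates for $w_{k}$, $z_{k+1}$ and $\xi_{k+1}$ (the paper packages this as an intermediate four-line scheme, you do the algebra directly with an explicit induction). The reversibility of these substitutions, which you note at the end, is exactly how the paper handles the converse direction as well.
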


\begin{proof}
Given $z_{0}, w_{0}, y_{0} \in \cH$ and $z_{1} = J_{\gamma M} \left( y_{0}  -\gamma F \left( w_{0} \right)\right)$,
it holds
\begin{equation*}
\xi_{1} = \frac{1}{\gamma} \left( y_{0} - z_{1} \right)  - F \left( w_{0} \right)\in M \left( z_{1} \right) .
\end{equation*}
In the same way, by invoking also the third update block of Algorithm \ref{algo:im}, we obtain
\begin{equation*}
\xi_{k+1} := \frac{1}{\gamma}\left( y_{k}-z_{k+1}\right)  - F \left( w_{k} \right)\in M(z_{k+1}) \quad \forall k \geq 0.
\end{equation*}
Therefore, the iterative scheme in Algorithm \ref{algo:im} can be equivalently written as
\begin{equation}\
(\forall k \geq  1) \ \left\{
\begin{aligned}\label{ouralg1}	
y_{k} 		& = z_{k} + \left( 1 - \dfrac{\alpha}{k + \alpha} \right) \left( z_{k} - z_{k-1} \right) + \left( 1 - \frac{c}{k+\alpha} \right) \gamma \left(\xi_k + F \left( w_{k-1} \right)\right),  \\
w_{k}
&= y_{k} - \gamma \left( \xi_k + F \left( w_{k-1} \right) \right),\\
z_{k+1} 	& =  J_{\gamma M} \left( y_{k} -\gamma F \left( w_{k} \right)\right),\\
\xi_{k+1}	& = \frac{1}{\gamma}\left( y_{k}-z_{k+1}\right)  -  F \left( w_{k} \right),
\end{aligned}\right.
\end{equation}
which, after some simplifications, it transforms into \eqref{algo:z-xi}.

Conversely, starting from \eqref{algo:z-xi}, we can define a sequence $\left( y_{k} \right)_{k \geq 1}$ following the first update block in  \eqref{ouralg1}. Consequently, for every $k \geq 1$ it holds
\begin{equation*}
\gamma \left( \xi_{k+1}  +  F \left( w_{k} \right) \right) = y_k-z_{k+1} ,
\end{equation*}
leading to the transformation of \eqref{algo:z-xi} into \eqref{algo:z-y}.
\end{proof}

\begin{rmk}\label{rem22}
If the \emph{momentum term} $z_{k} - z_{k-1}$ and the \emph{correction term} $y_{k-1} - z_{k}$ are removed from the first update block of \eqref{algo:z-y}, the resulting algorithm reduces to the Reflected Forward-Backward (RFB) method
\begin{equation}\
(\forall k \geq 1) \ \left\{
\begin{aligned}\label{RFB}
w_{k}
& = 2z_{k} - z_{k-1},\\
z_{k+1}
& = J_{\gamma M} \left( z_{k} -\gamma F \left( w_{k} \right) \right),
\end{aligned}\right.
\end{equation}
as introduced in \cite{Cevher2020Vu}. This method extends the PRG method proposed by Malitsky in \cite{Malitsky2015} for variational inequalities. For \eqref{RFB}, best-iterate and last-iterate convergence rates of $\bO \left( \frac{1}{\sqrt{k}} \right)$ as $k \to +\infty$ for the tangent residual were established in \cite{Tran-Dinh2023}.

When $F$ is linear, \eqref{RFB} simplifies to the Forward-Reflected-Backward (FRB) method \eqref{algo:Malitsky-Tam} by Malitsky and Tam \cite{Malitsky2020Tam}
\begin{equation*}
(\forall k \geq 1) \ z_{k+1} := J_{\gamma M} \left( z_{k} -2\gamma F \left( z_{k} \right) + \gamma F \left( z_{k-1} \right) \right).
\end{equation*}
While convergence results for the iterates were established in \cite{Malitsky2020Tam} in the general setting, a last-iterate convergence rate of $\bO \left( \frac{1}{\sqrt{k}} \right)$ as $k \to +\infty$ for the tangent residual was demonstrated in \cite{Cai2022Zheng} in the specific case of variational inequalities.
\end{rmk}

\begin{rmk}\label{rem23}
The Accelerated Reflected Gradient (ARG) method, introduced by Cai and Zheng in \cite{Cai2022Zheng}, is defined as follows for initial points $z_{0}=z_{1} \in \cH$ and $z_{2}=J_{\gamma M}\left(z_{1} -\gamma F(z_{1})\right)$,
\begin{equation*}\
(\forall k \geq 1) \ \left\{
\begin{aligned}
x_{k} & = 2 z_{k} -  z_{k-1} + \frac{1}{k+1}\left(z_{0} - z_{k} \right) - \frac{1}{k}\left(z_{0} - z_{k-1} \right), \\
z_{k+1} & = J_{\gamma M} \left( z_{k} - \gamma F\left( x_{k} \right) + \frac{1}{k+1} \left(z_{0} -z_{k} \right)\right),
\end{aligned}\right.
\end{equation*}
where $0<\gamma\leq\frac{1}{2\sqrt{6}L}$. The ARG method builds on the anchoring technique employed in \eqref{RFB}, a concept originating in \cite{Halpern1967}. It was shown to achieve a last-iterate convergence rate of $\bO \left(\frac{1}{k}\right)$ as $k \to +\infty$ for the tangent residual.

The Accelerated Extragradient (AEG) method was introduced by Tran-Dinh in \cite{Tran-Dinh2024} accelerates the FBF method by employing the momentum term $\frac{k+1}{k+3}\left( x_{k}-x_{k-1} \right)$ alongside additional correction terms. It is defined as follows for initial points $x_0, x_1, w_0, z_1 \in \cH$
\begin{equation*}\
(\forall k \geq 1) \ \left\{
\begin{aligned}\label{AEG}
x_{k} & = J_{\gamma M} \left( z_{k} - \gamma F(z_{k}) + \frac{k+1}{k+2}\gamma w_{k-1} \right), \\
w_{k} & = \frac{1}{\gamma} \left( z_{k} - x_{k} + \frac{k+1}{k+2} \gamma w_{k-1}\right) + F (x_{k}) -  F(z_{k}),\\
z_{k+1} & = x_{k} + \frac{k+1}{k+3}\left( x_{k}-x_{k-1} \right) - \frac{k+2}{k+3}\gamma \left( F (x_{k}) -  F(z_{k}) \right),
\end{aligned}\right.
\end{equation*}
where $0<\gamma< \frac{1}{L}$.

To further reduce the number of evaluations of $F$, Tran-Dinh proposed the Accelerated Past Extragradient (APEG) method in \cite{Tran-Dinh2024}, defined as follows for initial points $x_0, x_1, w_0, z_1 \in \cH$
\begin{equation}\
(\forall k \geq 1) \ \left\{
\begin{aligned}
x_{k} & = J_{\gamma M} \left( z_{k} -\gamma F(z_{k}) + \frac{k+1}{k+2}\gamma w_{k-1} \right), \\
w_{k} & = \frac{1}{\gamma} \left( z_{k} - x_{k} + \frac{k+1}{k+2} \gamma w_{k-1}\right),\\
z_{k+1} & = x_{k} + \frac{k+1}{k+3}\left( x_{k}-x_{k-1} \right) + \frac{5(k+2)}{6(k+3)}\gamma w_{k} -\frac{5(k+1)}{6(k+3)}\gamma w_{k-1},\nonumber
\end{aligned}\right.
\end{equation}
where $0<\gamma\leq \frac{3}{2\sqrt{29}L}$.

Both the AEG and APEG methods have been shown to achieve last-iterate convergence rates of $\bO \left(\frac{1}{k}\right)$ as $k \to +\infty$ for the tangent residual.

In contrast, the Fast RFB algorithm introduces both momentum and correction terms to the Reflected Forward-Backward framework \eqref{RFB}. This design not only ensures weak convergence to a solution of \eqref{intro:pb:eq} but also achieves a superior convergence rate of $\bO\left(\frac{1}{k}\right)$ as $k \to +\infty$ for the discrete velocity and the tangent residual.
\end{rmk}

\subsection{Convergence analysis}\label{subsec22}

In the following, we will use the notation
\begin{equation}\label{eq:vk}
v_{k} := F(w_{k-1}) + \xi_{k} \quad \forall k \geq 1.
\end{equation}
With this notation, from Algorithm~\ref{algo:im} we obtain
\begin{equation}
\label{algo:im:z}
w_{k} = z_{k} + \dfrac{k}{k + \alpha} \Big( z_{k} - z_{k-1} \Big) -  \dfrac{c}{k + \alpha} \gamma v_{k} \quad \mbox{and} \quad
z_{k+1}	= w_{k} - \gamma \Big( v_{k+1} - v_{k} \Big) \quad \forall k \geq 1.
\end{equation}
Since $ 0 < \gamma < \frac{1}{2L}$, the definition of $v_{k}$ together with the Lipschitz continuity of $F$ give for all $k \geq 1$
\begin{align}\label{split:Lip}
\left\lVert \xi_{k+1} + F \left( z_{k+1} \right) - v_{k+1} \right\rVert
& = \left\lVert F \left( z_{k+1} \right) - F \left( w_{k} \right) \right\rVert
\leq L \left\lVert z_{k+1} - w_{k} \right\rVert \nonumber\\
&= \gamma L \left\lVert v_{k+1} - v_{k} \right\rVert
\leq \frac{1}{2}  \left\lVert v_{k+1} - v_{k} \right\rVert
\leq \left\lVert v_{k+1} - v_{k} \right\rVert.
\end{align}
By summing up the two equations in \eqref{algo:im:z}, we obtain that for all $k \geq 1$ it holds
\begin{equation}\label{split:d-u}
\left( k + \alpha \right) \left( z_{k+1} - z_{k} \right) - k \left( z_{k} - z_{k-1} \right)
= \minus c \gamma v_{k+1} - \gamma (k+\alpha-c) \left( v_{k+1} - v_{k} \right) .
\end{equation}

Let $z_{\ast} \in \zer(M+F)$, $0 \leq \lambda \leq \alpha - 1$ and $1 < s < 2$. In the lines of \cite{Bot2023RobertNguyen}, we denote for all $k \geq 1$
\begin{align}
\label{split:defi:u-k-1-lambda}
u_{\lambda,s,k} := & 2 \lambda \left( z_{k} - z_{\ast} \right) + 2k \left( z_{k} - z_{k-1} \right) + s\gamma k v_{k},\\
\label{split:defi:E-k}
\cE_{\lambda,s,k} := & \dfrac{1}{2} \left\lVert u_{\lambda,s,k} \right\rVert ^{2} + 2 \lambda \left( \alpha - 1 - \lambda \right) \left\lVert z_{k} - z_{\ast} \right\rVert ^{2} + 2 \lambda \gamma \left((2-s)k+2(\alpha-c)\right) \left\langle z_{k} - z_{\ast}, v_{k} \right\rangle \nonumber\\
& + \frac{1}{2}\gamma^2\left((2-s)k+2(\alpha-c)\right) \left( s k + 2 c \right) \left\lVert v_{k} \right\rVert ^{2},
\end{align}
and for all $k \geq 2$
\begin{align}
\label{split:defi:F}
\cG_{\lambda,s,k} &:= \cE_{\lambda,s,k} - 2 \gamma \big((2-s)k+2(\alpha -c)\big)k \left\langle z_{k} - z_{k-1}, F \left( z_{k} \right) - F \left( w_{k-1} \right) \right\rangle \nonumber\\
&\quad\phantom{:} + \gamma^3 L\big((2-s)k+2(\alpha -c)\big) \left(  k+\alpha -c +c \gamma L \sqrt{(2-s)k+2(\alpha -c )} \right) \left\lVert v_{k} - v_{k-1} \right\rVert^{2}.
\end{align}
The proof of the following lemma is given in Appendix B.
\begin{lem}
\label{lem:dE}
Let $z_{\ast} \in \zer(M+F)$ and $\left(z_{k} \right)_{k \geq 0}$ be the sequence generated by Algorithm~\ref{algo:im}. For $0 \leq \lambda \leq \alpha - 1$ and $1 < s < 2$, the following identity holds for all $k \geq 1$
\begin{equation}\label{split:dE}
\begin{split}
\cE_{\lambda,s,k+1} - \cE_{\lambda,s,k} = & \ \minus 4 \left( c - 1 \right) \lambda \gamma \left\langle z_{k+1} - z_{\ast}, v_{k+1} \right\rangle+ 2 \left( \lambda + 1 - \alpha \right) \left( 2k + \alpha + 1 \right) \left\lVert z_{k+1} - z_{k} \right\rVert ^{2} \\
& + 2\gamma \Bigl(\big((2-s)\lambda + s  \left( \lambda + 1 - \alpha \right) +s - 2c\big) k + 2 \lambda \alpha +s -2\alpha c \Bigr)  \left\langle z_{k+1} - z_{k}, v_{k+1} \right\rangle \\
& - 2\gamma \left( k + \alpha \right) \Big((2-s)k + 2\left( \alpha - c \right) \Big) \left\langle z_{k+1} - z_{k}, v_{k+1} - v_{k} \right\rangle\\
& - \gamma^2 \left( k + \alpha \right) \Big((2-s)k + 2\left( \alpha - c \right) \Big) \left\lVert v_{k+1} - v_{k} \right\rVert ^{2} \\
& +\gamma^2  \Big( (1-c)(2sk + 2c +s) + s(\alpha - c )\Big) \left\lVert v_{k+1} \right\rVert ^{2} .
\end{split}
\end{equation}
\end{lem}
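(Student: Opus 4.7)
The identity is purely algebraic: both sides expand into a linear combination of the six monomials $\|z_{k+1}-z_{k}\|^{2}$, $\|v_{k+1}\|^{2}$, $\|v_{k+1}-v_{k}\|^{2}$, $\langle z_{k+1}-z_{\ast},v_{k+1}\rangle$, $\langle z_{k+1}-z_{k},v_{k+1}\rangle$, and $\langle z_{k+1}-z_{k},v_{k+1}-v_{k}\rangle$, and the only input from the algorithm itself is the discrete recurrence \eqref{split:d-u}. The plan is therefore to split $\cE_{\lambda,s,k}$ into its four constitutive summands, compute the one-step difference of each summand in this common basis, and then match coefficients against the right-hand side of \eqref{split:dE}.

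For the first summand $\tfrac{1}{2}\|u_{\lambda,s,k}\|^{2}$, I would use the polarization identity
\begin{equation*}
\tfrac{1}{2}\|u_{\lambda,s,k+1}\|^{2}-\tfrac{1}{2}\|u_{\lambda,s,k}\|^{2} = \tfrac{1}{2}\langle u_{\lambda,s,k+1}-u_{\lambda,s,k},\,u_{\lambda,s,k+1}+u_{\lambda,s,k}\rangle,
\end{equation*}
and rewrite
\begin{equation*}
u_{\lambda,s,k+1}-u_{\lambda,s,k} = 2(\lambda+1-\alpha)(z_{k+1}-z_{k}) + 2\big[(k+\alpha)(z_{k+1}-z_{k})-k(z_{k}-z_{k-1})\big] + s\gamma\big[v_{k+1}+k(v_{k+1}-v_{k})\big].
\end{equation*}
Substituting \eqref{split:d-u} into the bracket eliminates $z_{k}-z_{k-1}$ and produces a closed-form linear combination of $z_{k+1}-z_{k}$, $v_{k+1}$, and $v_{k+1}-v_{k}$. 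For the second summand I use $\|z_{k+1}-z_{\ast}\|^{2}-\|z_{k}-z_{\ast}\|^{2} = 2\langle z_{k+1}-z_{k},z_{k+1}-z_{\ast}\rangle - \|z_{k+1}-z_{k}\|^{2}$. For the third summand, abbreviating $A_{k}:=(2-s)k+2(\alpha-c)$, I apply the discrete product rule
\begin{equation*}
A_{k+1}\langle z_{k+1}-z_{\ast},v_{k+1}\rangle - A_{k}\langle z_{k}-z_{\ast},v_{k}\rangle = (2-s)\langle z_{k+1}-z_{\ast},v_{k+1}\rangle + A_{k}\big[\langle z_{k+1}-z_{k},v_{k+1}\rangle + \langle z_{k}-z_{\ast},v_{k+1}-v_{k}\rangle\big].
\end{equation*}
The fourth summand is treated similarly, combining the analogous difference formula for the scalar factor $A_{k}(sk+2c)$ with $\|v_{k+1}\|^{2}-\|v_{k}\|^{2} = 2\langle v_{k+1}-v_{k},v_{k+1}\rangle - \|v_{k+1}-v_{k}\|^{2}$.

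The main obstacle is pure bookkeeping: the decomposition above produces a number of auxiliary inner products, most notably $\langle z_{k}-z_{\ast},v_{k+1}-v_{k}\rangle$ and a residual $\langle z_{k+1}-z_{\ast},v_{k+1}-v_{k}\rangle$, that do not appear on the right-hand side of \eqref{split:dE} and must cancel identically against the contributions from the cross term in the first summand. This cancellation is precisely where the particular choice of the coefficients $2\lambda(\alpha-1-\lambda)$, $2\lambda((2-s)k+2(\alpha-c))$, and $\tfrac{1}{2}((2-s)k+2(\alpha-c))(sk+2c)$ in the definition of $\cE_{\lambda,s,k}$ reveals itself. I would keep all coefficients symbolic in $\lambda,s,c,\alpha$ throughout, and only at the end verify that the surviving coefficients reduce to $-4(c-1)\lambda\gamma$, $2(\lambda+1-\alpha)(2k+\alpha+1)$, and the three explicit coefficients of $\langle z_{k+1}-z_{k},v_{k+1}\rangle$, $\langle z_{k+1}-z_{k},v_{k+1}-v_{k}\rangle$, $\|v_{k+1}-v_{k}\|^{2}$, and $\|v_{k+1}\|^{2}$ displayed in \eqref{split:dE}.
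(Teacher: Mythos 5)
Your plan is correct and follows essentially the same route as the paper's proof in Appendix B: there too, the difference $u_{\lambda,s,k+1}-u_{\lambda,s,k}$ is reduced via \eqref{split:d-u} to the combination $2(\lambda+1-\alpha)(z_{k+1}-z_k)-(2c-s)\gamma v_{k+1}-\gamma\big((2-s)k+2(\alpha-c)\big)(v_{k+1}-v_k)$, each of the four summands of $\cE_{\lambda,s,k}$ is differenced with the corresponding polarization or discrete product rule, and the stray terms $\langle z_{k+1}-z_{\ast},v_{k+1}-v_k\rangle$ cancel between the cross term of $\tfrac12\|u_{\lambda,s,k}\|^2$ and the $\langle z_k-z_{\ast},v_k\rangle$ summand exactly as you predict. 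The only cosmetic difference is that the paper writes the first polarization as $\langle u_{\lambda,s,k+1},u_{\lambda,s,k+1}-u_{\lambda,s,k}\rangle-\tfrac12\|u_{\lambda,s,k+1}-u_{\lambda,s,k}\|^2$ rather than your symmetric form, which is the same identity.
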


In the next result we prove a quasi-F\'ejer monotone property together with a lower bound for the sequence $(\cG_{\lambda,s,k})_{k \geq 1}$. For the proof we also refer the reader to Appendix B.

\begin{lem}
\label{lem:reg}
Let $z_{\ast} \in \zer(M+F)$ and $\left(z_{k} \right)_{k \geq 0}$ be the sequence generated by Algorithm~\ref{algo:im}. For $0 \leq \lambda \leq \alpha - 1$ and $1 < s < 2$, the following statements are true:
\begin{itemize}
\item[(i)] for all $k \geq 2$, it holds
\begin{equation*}
\begin{split}
& \ \cG_{\lambda,s,k+1} - \cG_{\lambda,s,k}\\
\leq & \frac{4\left( c - 1 \right)^2}{\left( k+1 \right) \sqrt{k+1}}  \lambda ^{2} \left\lVert z_{k+1} - z_{\ast} \right\rVert ^{2}- 4 \left( c - 1 \right) \lambda \gamma \left\langle z_{k+1} - z_{\ast}, \xi_{k+1} + F \left( z_{k+1} \right) \right\rangle\\
& + 2\gamma \left( \omega_{1} k + \omega_{2} \right) \left\langle z_{k+1} - z_{k}, v_{k+1} \right\rangle -  \mu_{k} \gamma^{2} \left\lVert v_{k+1} - v_{k} \right\rVert ^{2} \\
& + 2\left( \omega_{3} k + \sqrt{\omega_{5}(k+1)+\omega_{7}}\right) \left\lVert z_{k+1} - z_{k} \right\rVert ^{2} + \gamma^2 \left( \omega_{4} k + c \sqrt{\omega_{0}k+2 \omega_{6}} + s \omega_{6}\right)  \left\lVert v_{k+1} \right\rVert ^{2},
\end{split}
\end{equation*}
where
\begin{equation}\label{ene:const}
\begin{split}		
\omega_{0} = & \ 2-s > 0, \quad \omega_{1} = \omega_{0} \lambda +s  \left( \lambda + 1 - \alpha \right) + s - 2c, \quad \omega_{2} = 2\lambda \alpha +s -2 \alpha c, \\
\omega_{3} 	 = & \ 2 \left( \lambda + 1 - \alpha \right) \leq 0, \quad \omega_{4} = 2s( 1 - c) < 0, \quad \omega_{5}  = (\alpha - 2)\omega_{0} > 0, \\
\omega_{6} = & \ \alpha - c > 0, \quad \omega_{7}  = \ (\alpha -1)(2\omega_{6} - \omega_{0}) > 0,\\
\mu_{k} = & \ \omega_{0} \left(1 - 2\gamma L\right)k^2 + \left(2 \omega_{6} + \omega_{0}\alpha \right)k + 2 \omega_{6} \alpha  \\
& \ - 2 \gamma L\left(\left( 2(\omega_{0} + 2 \omega_{6})- s \omega_{6}\right)k + \left(\omega_{0} + 2 \omega_{6}\right)(\alpha +1 -c)\right)  - (k + 1)\sqrt{k + 1}\\
& \  -(\omega_{5}(k+1) + \omega_{7})\sqrt{\omega_{5}(k+1) + \omega_{7}} -\gamma^2 L^2 c\big(\omega_{0}(k + 1) + 2 \omega_{6}\big)\sqrt{\omega_{0}(k + 1) + 2 \omega_{6}}.
\end{split}
\end{equation}

\item[(ii)] there exists $k_1 \geq 2$ such that for all $k \geq k_1$ it holds
\begin{equation}\label{ene:low}
\begin{split}
\cG_{\lambda,s, k}
\geq & \ \dfrac{1}{4 s } \omega_{0} \left\lVert 4 \lambda \left( z_{k} - z_{\ast} \right) + 2 k \left( z_{k} - z_{k-1} \right) + 2s \gamma k v_{k} \right\rVert ^{2} \\
& +  \frac{1}{4s}\omega_{0}^2k^{2} \left\lVert z_{k} - z_{k-1} \right\rVert ^{2} + 2  \lambda \left(
\alpha - 1 - \dfrac{4\left( \alpha - 1\right) }{s\alpha} \lambda\right) \left\lVert z_{k} - z_{\ast} \right\rVert ^{2}.
\end{split}
\end{equation}
\end{itemize}
\end{lem}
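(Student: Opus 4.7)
The plan is to anchor the proof on Lemma~\ref{lem:dE}, which computes $\cE_{\lambda,s,k+1}-\cE_{\lambda,s,k}$ exactly, and to treat the two extra terms distinguishing $\cG$ from $\cE$ in \eqref{split:defi:F} as a telescoping adjustment. Three tools will be used repeatedly: the identity $z_{k+1}-w_k=-\gamma(v_{k+1}-v_k)$ from \eqref{algo:im:z}; the Lipschitz-based estimate \eqref{split:Lip}, which controls $\|\xi_{k+1}+F(z_{k+1})-v_{k+1}\|$ by $\|v_{k+1}-v_k\|$; and the monotonicity of $M$ applied to $\xi_{k+1}\in M(z_{k+1})$, which is what allows us to convert inner products against $v_{k+1}$ into inner products against $\xi_{k+1}+F(z_{k+1})$, the quantity that actually appears on the right-hand side of (i).

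For part (i), I would substitute the Lemma~\ref{lem:dE} identity for $\cE_{\lambda,s,k+1}-\cE_{\lambda,s,k}$ and add the difference of the two $\cG$-specific correction terms. The term $-4(c-1)\lambda\gamma\langle z_{k+1}-z_\ast,v_{k+1}\rangle$ is rewritten as $-4(c-1)\lambda\gamma\langle z_{k+1}-z_\ast,\xi_{k+1}+F(z_{k+1})\rangle$ plus a residual inner product; the residual is bounded by Cauchy--Schwarz, \eqref{split:Lip}, and a Young inequality with weight proportional to $(k+1)\sqrt{k+1}$, producing exactly the $\frac{4(c-1)^2}{(k+1)\sqrt{k+1}}\lambda^2\|z_{k+1}-z_\ast\|^2$ term together with a negative $\sqrt{k+1}$-scaled contribution to $\gamma^2\|v_{k+1}-v_k\|^2$. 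The mixed term $-2\gamma(k+\alpha)\bigl((2-s)k+2(\alpha-c)\bigr)\langle z_{k+1}-z_k,v_{k+1}-v_k\rangle$ from Lemma~\ref{lem:dE} is then combined with the telescoped $\langle z_k-z_{k-1},F(z_k)-F(w_{k-1})\rangle$ correction and split via a Young inequality with weight proportional to $\sqrt{\omega_5(k+1)+\omega_7}$ into a piece of $\|z_{k+1}-z_k\|^2$ and a further piece of $\|v_{k+1}-v_k\|^2$. The analogous splitting with weight $\sqrt{\omega_0 k+2\omega_6}$ is applied to the telescoped $\|v_k-v_{k-1}\|^2$ correction, generating the $c\sqrt{\omega_0 k+2\omega_6}$ contribution to the $\|v_{k+1}\|^2$ coefficient. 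All negative $\|v_{k+1}-v_k\|^2$ contributions then aggregate into $-\mu_k\gamma^2$ with $\mu_k$ matching \eqref{ene:const}.

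For part (ii), the $\gamma^3 L(\cdots)\|v_k-v_{k-1}\|^2$ contribution in $\cG$ is nonnegative, so it suffices to lower bound $\cE_{\lambda,s,k}-2\gamma\bigl((2-s)k+2(\alpha-c)\bigr)k\langle z_k-z_{k-1},F(z_k)-F(w_{k-1})\rangle$. Using $z_k-w_{k-1}=-\gamma(v_k-v_{k-1})$ and Lipschitz continuity of $F$, the troublesome inner product is absorbed by Young into an $\varepsilon k^2\|z_k-z_{k-1}\|^2$ piece plus a polynomial-in-$k$ multiple of $\|v_k-v_{k-1}\|^2$; for $k\geq k_1$ large enough the latter is dominated by the nonnegative $\gamma^3 L(\cdots)$ correction, which is the sole role of $k_1$. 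To lower bound $\cE_{\lambda,s,k}$ itself I would exploit the relation $\tfrac{1}{2}U = u_{\lambda,s,k}-k(z_k-z_{k-1})$, where $U$ denotes the target vector $4\lambda(z_k-z_\ast)+2k(z_k-z_{k-1})+2s\gamma kv_k$, to reallocate a fraction $\omega_0/s\in(0,1)$ of $\tfrac{1}{2}\|u_{\lambda,s,k}\|^2$ to $\tfrac{\omega_0}{4s}\|U\|^2$; the leftover cross terms between $u_{\lambda,s,k}$ and $k(z_k-z_{k-1})$, combined with the ample $\|v_k\|^2$-coefficient already present in $\cE$, reassemble into the $\tfrac{\omega_0^2}{4s}k^2\|z_k-z_{k-1}\|^2$ term. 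Finally, the cross term $2\lambda\gamma((2-s)k+2(\alpha-c))\langle z_k-z_\ast,v_k\rangle$ is absorbed via a Young inequality with weight $\tfrac{4(\alpha-1)}{s\alpha}$ into $\|z_k-z_\ast\|^2$ and $\|v_k\|^2$, giving the coefficient $\alpha-1-\tfrac{4(\alpha-1)}{s\alpha}\lambda$ on $\|z_k-z_\ast\|^2$ stated in \eqref{ene:low}.

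The main obstacle is the tight bookkeeping in (i): every cross term generated by the combination of Lemma~\ref{lem:dE} with the two telescoped $\cG$-corrections must be distributed among the five target terms on the right-hand side so that the $\|v_{k+1}-v_k\|^2$ and $\|v_{k+1}\|^2$ coefficients collapse exactly to $-\mu_k\gamma^2$ and $\gamma^2(\omega_4 k+c\sqrt{\omega_0 k+2\omega_6}+s\omega_6)$, with no stray lower-order error. The specific Young weights built from $(k+1)\sqrt{k+1}$, $\sqrt{\omega_5(k+1)+\omega_7}$ and $\sqrt{\omega_0 k+2\omega_6}$ are forced by two competing demands: they must realize the precise coefficients in the statement, and they must leave $\mu_k$ nonnegative for all sufficiently large $k$, so that the subsequent telescoping argument producing the $o(1/k)$ rate can be carried out.
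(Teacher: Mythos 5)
Your proposal follows essentially the same route as the paper's proof: part (i) is obtained by telescoping the two $\cG$-specific corrections on top of the identity of Lemma~\ref{lem:dE}, rewriting $\langle z_{k+1}-z_\ast,v_{k+1}\rangle$ via \eqref{split:Lip} and a Young step with weight $(k+1)\sqrt{k+1}$, invoking the monotonicity of $M+F$ on the mixed term $\langle z_{k+1}-z_k,v_{k+1}-v_k\rangle$ and then distributing the remainders with the weights $\sqrt{\omega_{5}(k+1)+\omega_{7}}$ and $\sqrt{\omega_{0}k+2\omega_{6}}$, while part (ii) is the same completion-of-squares/parallelogram argument built on $\tfrac12 U=u_{\lambda,s,k}-k(z_k-z_{k-1})$. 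Two minor corrections: the conversion of $\langle z_{k+1}-z_\ast,v_{k+1}\rangle$ into $\langle z_{k+1}-z_\ast,\xi_{k+1}+F(z_{k+1})\rangle$ uses only the definition of $v_{k+1}$ and Lipschitz continuity (monotonicity enters only through the mixed term), and in the paper the integer $k_1$ is needed chiefly so that the leftover $\left\lVert v_k\right\rVert^2$-coefficient $(\omega_{0}c+s\omega_{6})k+2c\omega_{6}-\tfrac{s\alpha\omega_{6}^2}{\alpha-2}$ becomes nonnegative, since the absorption of the inner-product correction by the $\gamma^3 L$ and $\gamma^4 L^2$ terms already holds for all $k\geq 2$ thanks to $\gamma<\tfrac{1}{2L}$.
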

The following lemma will play an essential role in the proof of the main convergence result of this section. Its proof is also given in Appendix B.
\begin{lem}
\label{lem:trunc}
Let $z_{\ast} \in \zer(M+F)$ and $\left(z_{k} \right)_{k \geq 0}$ be the sequence generated by Algorithm~\ref{algo:im}. The following statements are true:
\begin{itemize}
\item[(i)] if $s$ and $\delta$ are such that
\begin{equation}\label{condi:s}
1+ \frac{\alpha}{4 c - \alpha} <s < 2
\end{equation}
and
\begin{equation}\label{condi:delta}
\max\left\{\sqrt{\frac{s\left(\alpha - 2\right) + 2\left( 2c -s \right)}{ 4 s \left(c - 1\right)}}, \sqrt{\frac{-\left( 2 -s \right)\left(\alpha - 1 \right) - s +2c }{s\left(c - 1 \right)}}\right\}< \delta <1,
\end{equation}
then there exist
\begin{equation}
\label{trunc:fea}
0 \leq \underline{\lambda} \left( \alpha,c,s \right) < \overline{\lambda} \left( \alpha,c,s \right) \leq \dfrac{s \alpha}{4},
\end{equation}
such that for every $\lambda$ satisfying $\underline{\lambda} \left( \alpha,c,s \right) < \lambda < \overline{\lambda} \left( \alpha,c,s \right)$ one can find an integer $k_{\lambda} \geq 1$ with the property that for all $k \geq  k_{\lambda}$ the following inequality holds
\begin{equation}\label{Rk}
\begin{split}
R_{k} := & \ 2\gamma \left( \omega_{1} k + \omega_{2} \right) \left\langle z_{k+1} - z_{k}, v_{k+1} \right\rangle + \delta\gamma^2 \left( \omega_{4} k + c\sqrt{\omega_{0}k + 2 \omega_{6}}+ s \omega_{6}\right)  \left\lVert v_{k+1} \right\rVert ^{2}\\
& \ + 2\delta\left( \omega_{3} k + \sqrt{\omega_{5}(k + 1) + \omega_{7}} \right) \left\lVert z_{k+1} - z_{k} \right\rVert ^{2} \\
\leq & \ 0;
\end{split}
\end{equation}

\item[(ii)] there exists $k_2 \geq 2$ such that for all $k \geq k_2$ it holds
\begin{equation}
\label{muk}
\mu_{k} \geq 0.
\end{equation}
\end{itemize}
\end{lem}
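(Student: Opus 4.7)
\medskip

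\noindent\textbf{Proof proposal for Lemma \ref{lem:trunc}.}

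\medskip

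The plan for (ii) is the simpler of the two and I would dispatch it first by a leading-order inspection of $\mu_{k}$. The only $k^{2}$-term is $\omega_{0}(1-2\gamma L)k^{2}$, and this coefficient is strictly positive because $\omega_{0}=2-s>0$ by hypothesis and $0<\gamma<\tfrac{1}{2L}$. Every remaining contribution in \eqref{ene:const} is at most of order $k^{3/2}$: the polynomial correction multiplied by $2\gamma L$ is linear in $k$; the terms $(k+1)\sqrt{k+1}$, $(\omega_{5}(k+1)+\omega_{7})\sqrt{\omega_{5}(k+1)+\omega_{7}}$ and $\gamma^{2}L^{2}c(\omega_{0}(k+1)+2\omega_{6})\sqrt{\omega_{0}(k+1)+2\omega_{6}}$ all behave like $O(k^{3/2})$. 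Since $k^{2}$ dominates $k^{3/2}$ as $k\to+\infty$, $\mu_{k}\to+\infty$, so there exists $k_{2}\geq 2$ with $\mu_{k}\geq 0$ for every $k\geq k_{2}$.

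For (i) my strategy is to decouple the cross term $\langle z_{k+1}-z_{k},v_{k+1}\rangle$ in $R_{k}$ by Young's inequality. With a free parameter $\rho>0$, I would write
\begin{equation*}
2\gamma(\omega_{1}k+\omega_{2})\langle z_{k+1}-z_{k},v_{k+1}\rangle \ \leq\ \gamma\lvert\omega_{1}k+\omega_{2}\rvert\Big(\rho\left\lVert z_{k+1}-z_{k}\right\rVert^{2}+\rho^{-1}\left\lVert v_{k+1}\right\rVert^{2}\Big),
\end{equation*}
so that $R_{k}$ is majorized by a combination of $\left\lVert z_{k+1}-z_{k}\right\rVert^{2}$ and $\left\lVert v_{k+1}\right\rVert^{2}$ alone. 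Extracting leading $k$-coefficients, the goal is to choose $\rho$ such that
\begin{equation*}
\gamma\rho\lvert\omega_{1}\rvert+2\delta\omega_{3}\ \leq\ 0\qquad\text{and}\qquad \gamma\lvert\omega_{1}\rvert\rho^{-1}+\delta\gamma^{2}\omega_{4}\ \leq\ 0.
\end{equation*}
Because $\omega_{3}<0$ and $\omega_{4}<0$, each inequality delimits an interval for $\rho$, and the two intervals intersect if and only if
\begin{equation*}
\omega_{1}^{2}\ \leq\ 2\delta^{2}\omega_{3}\omega_{4}\ =\ 8\delta^{2}s(c-1)(\alpha-1-\lambda).
\end{equation*}
Viewed as a quadratic inequality in $\lambda$ (the left-hand side $(2\lambda+2s-s\alpha-2c)^{2}$ is a parabola opening upwards, the right-hand side is affine and decreasing), this defines either an empty set or an open interval of feasible $\lambda$; I would set $\underline{\lambda}(\alpha,c,s)$ and $\overline{\lambda}(\alpha,c,s)$ as its endpoints (intersected with $[0,s\alpha/4]$).

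The technical heart of the argument, and the step I expect to consume the most care, is showing that the two explicit lower bounds on $\delta$ in \eqref{condi:delta} together with the range \eqref{condi:s} for $s$ are calibrated exactly so that the open interval above is non-empty and remains inside $[0,s\alpha/4]$. I would verify this by evaluating the quadratic inequality at the endpoints $\lambda=0$ and $\lambda=s\alpha/4$: the first bound on $\delta^{2}$ in \eqref{condi:delta} comes from requiring feasibility near $\lambda=s\alpha/4$ (where $\omega_{3}\omega_{4}$ is smallest), while the second bound ensures feasibility near $\lambda=0$, using the condition \eqref{condi:s} on $s$ to keep both numerators and denominators of the right sign. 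Once the leading-order inequality is strict for every $\lambda\in(\underline{\lambda},\overline{\lambda})$, the sub-leading contributions in $R_{k}$, which enter through coefficients of order $O(\sqrt{k})$ in the $\|v_{k+1}\|^{2}$ and $\|z_{k+1}-z_{k}\|^{2}$ terms and through the lower-order $\omega_{2}$-piece of the cross term, can be absorbed by the strict slack in the leading coefficients by choosing $k\geq k_{\lambda}$ sufficiently large. This yields $R_{k}\leq 0$ for all $k\geq k_{\lambda}$ and completes (i).
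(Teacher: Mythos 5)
Your part (ii) is correct and coincides with the paper's argument: the coefficient $\omega_{0}(1-2\gamma L)$ of $k^{2}$ is positive and every other contribution to $\mu_{k}$ is $O(k^{3/2})$. For part (i), your Young's-inequality decoupling with the free parameter $\rho$ is a legitimate variant of the paper's route (the paper instead computes the discriminant of the quadratic form in $\left\lVert z_{k+1}-z_{k}\right\rVert$ and $\left\lVert v_{k+1}\right\rVert$ and invokes Lemma~\ref{lem:quad}); both reduce the matter to the same leading-order condition $\omega_{1}^{2}<2\delta^{2}\omega_{3}\omega_{4}$, and your remark that strictness of this inequality lets the $O(\sqrt{k})$ and constant corrections be absorbed for $k$ large is sound.

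The gap is in the step you yourself flag as the technical heart: establishing \eqref{trunc:fea}, i.e.\ that the set of $\lambda$ with $\omega_{1}^{2}<2\delta^{2}\omega_{3}\omega_{4}$ meets $(0,\tfrac{s\alpha}{4})$ in a nonempty open interval. Your plan of evaluating the inequality at the endpoints $\lambda=0$ and $\lambda=\tfrac{s\alpha}{4}$ fails: for admissible parameters the inequality is violated at \emph{both} endpoints. Take $\alpha=3$, $c=1.9$, $s=1.9$ (so \eqref{condi:s} holds) and $\delta^{2}=0.995$ (so \eqref{condi:delta} holds, the two lower bounds for $\delta$ being $\approx 0.913$ and $\approx 0.997$). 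At $\lambda=0$ one gets $\omega_{1}^{2}=(s\alpha+2c-2s)^{2}=32.49$ against $2\delta^{2}\omega_{3}\omega_{4}=8\delta^{2}s(c-1)(\alpha-1)\approx 27.2$; at $\lambda=\tfrac{s\alpha}{4}=1.425$ one gets $\omega_{1}^{2}\approx 8.12$ against $\approx 7.83$. The feasible set is in fact a small interval (roughly $(1.10,1.20)$ in this example) around the \emph{vertex} $\lambda_{v}=\alpha-1-\tfrac{1}{2}\big((2-s)(\alpha-1)+s-2c+2s(c-1)\delta^{2}\big)\approx 1.149$ of the parabola $\lambda\mapsto \omega_{1}^{2}-2\delta^{2}\omega_{3}\omega_{4}$, far from both endpoints. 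The hypotheses are calibrated precisely for a vertex-based argument: the second bound in \eqref{condi:delta} makes the discriminant of this quadratic (written in $\xi=\lambda+1-\alpha$) positive, so its minimum value is negative; the first bound in \eqref{condi:delta} forces $\lambda_{v}<\tfrac{s\alpha}{4}$; and $\delta<1$ together with the parameter ranges forces $\lambda_{v}>0$. You need to replace the endpoint check by this vertex computation; then setting $\underline{\lambda}=\max\{0,\alpha-1+\xi_{1}\}$ and $\overline{\lambda}=\min\{\tfrac{s\alpha}{4},\alpha-1+\xi_{2}\}$, with $\xi_{1}<\xi_{2}$ the two roots, yields \eqref{trunc:fea} and completes the proof.
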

We are now in a position to prove a proposition that allows us to make initial statements about convergence rates.
\begin{prop}
\label{prop:split:lim}
Let $z_{\ast} \in \zer(M+F)$ and $\left(z_{k} \right)_{k \geq 0}$ be the sequence generated by Algorithm~\ref{algo:im}. The following statements are true:
\begin{itemize}
\item[(i)] it holds
\begin{align*}
& \sum_{k \geq 1} \left\langle z_{k} - z_{\ast}, F (z_{k}) + \xi_{k} \right\rangle < + \infty, \quad  \sum_{k \geq 1} k^{2} \left\lVert v_{k+1} - v_{k} \right\rVert ^{2} < + \infty,\\
& \sum_{k \geq 1} k \left\lVert F(w_{k}) + \xi_{k+1} \right\rVert ^{2} < + \infty,  \qquad  \sum_{k \geq 1} k \left\lVert z_{k+1} - z_{k} \right\rVert ^{2} < + \infty;
\end{align*}

\item[(ii)] the sequence $\left(z_{k} \right) _{k \geq 0}$ is bounded and it holds as $ k \to +\infty $
\begin{align*}
& \left\lVert z_{k} - z_{k-1} \right\rVert = \bO  \left( \dfrac{1}{k} \right),
\qquad
\left\lVert \xi_{k} +  F \left( w_{k-1} \right) \right\rVert = \bO  \left( \dfrac{1}{k} \right),
\\
& \left\lVert \xi_{k} + F \left( z_{k} \right) \right\rVert = \bO  \left( \dfrac{1}{k} \right),
\quad
\left\langle z_{k} - z_{\ast}, \xi_{k} + F \left( z_{k} \right) \right\rangle = \bO  \left( \dfrac{1}{k} \right);
\end{align*}

\item[(iii)] for all $s \in \left(1+ \frac{\alpha}{4 c - \alpha}, 2 \right)$, there exist $0 \leq \underline{\lambda} \left( \alpha,c, s \right) < \overline{\lambda} \left( \alpha,c ,s \right) \leq \frac{s\alpha}{4}$ such that, for all $\underline{\lambda} \left( \alpha,c,s \right) < \lambda < \overline{\lambda} \left( \alpha,c,s \right)$, the sequences $\left( \cE_{\lambda,s,k} \right)_{k \geq 1}$ and $\left( \cG_{\lambda,s,k} \right) _{k \geq 2}$ are convergent.
\end{itemize}
\end{prop}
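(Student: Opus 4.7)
The plan is to run a single Robbins--Siegmund-type argument on the Lyapunov sequence $(\cG_{\lambda,s,k})_{k\ge 2}$, combining the descent-like inequality of Lemma~\ref{lem:reg}(i), the coercive lower bound of Lemma~\ref{lem:reg}(ii), and the sign conditions $R_{k}\le 0$ and $\mu_{k}\ge 0$ of Lemma~\ref{lem:trunc}, all glued together by the monotonicity of $M+F$. Throughout, I would first fix $s\in\bigl(1+\tfrac{\alpha}{4c-\alpha},2\bigr)$ and $\delta\in(\cdot,1)$ as in Lemma~\ref{lem:trunc}(i), and then pick any $\lambda\in(\underline{\lambda}(\alpha,c,s),\overline{\lambda}(\alpha,c,s))$. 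Since $\overline{\lambda}\le\tfrac{s\alpha}{4}$, such a choice automatically makes the factor $\alpha-1-\tfrac{4(\alpha-1)}{s\alpha}\lambda$ in \eqref{ene:low} strictly positive, so Lemma~\ref{lem:reg}(ii) forces the coercive bound $\left\lVert z_{k}-z_{\ast}\right\rVert^{2}\le C_{2}\cG_{\lambda,s,k}$ for $k$ large.

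\textbf{A quasi-Fej\'er estimate for $\cG$.} Since $z_{\ast}\in\zer(M+F)$ one has $-F(z_{\ast})\in M(z_{\ast})$, and monotonicity of $M$ and $F$ yields $\langle z_{k+1}-z_{\ast},\xi_{k+1}+F(z_{k+1})\rangle\ge 0$. Substituting $R_{k}\le 0$ and $\mu_{k}\ge 0$ into the inequality of Lemma~\ref{lem:reg}(i) and keeping the remaining nonpositive pieces on the left, I would obtain, for some $k_{0}$ and all $k\ge k_{0}$, a bound of the form
\begin{equation*}
\cG_{\lambda,s,k+1}-\cG_{\lambda,s,k}+D_{k}\;\le\;\frac{4(c-1)^{2}\lambda^{2}}{(k+1)\sqrt{k+1}}\left\lVert z_{k+1}-z_{\ast}\right\rVert^{2},
\end{equation*}
where $D_{k}\ge 0$ collects a positive multiple of each of $\langle z_{k+1}-z_{\ast},\xi_{k+1}+F(z_{k+1})\rangle$, $k\left\lVert z_{k+1}-z_{k}\right\rVert^{2}$, $k\left\lVert v_{k+1}\right\rVert^{2}$ (from $|\omega_{3}|k$ and $|\omega_{4}|k$ dominating the $\sqrt{k}$ corrections), and $\mu_{k}\left\lVert v_{k+1}-v_{k}\right\rVert^{2}$. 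Combining the previous display with the coercive lower bound above gives
\begin{equation*}
\Bigl(1-\tfrac{C_{3}}{(k+1)^{3/2}}\Bigr)\cG_{\lambda,s,k+1}\;\le\;\cG_{\lambda,s,k},
\end{equation*}
and since $\sum k^{-3/2}<+\infty$, the standard product/exponentiation argument yields that $(\cG_{\lambda,s,k})$ is bounded; hence $(z_{k})$ is bounded and the right-hand side above is itself $O(k^{-3/2})$, thus summable.

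\textbf{Statements (i) and (ii).} Telescoping the bound on $\cG_{\lambda,s,k+1}-\cG_{\lambda,s,k}$ with the summable $O(k^{-3/2})$ right-hand side yields all four summability claims in~(i), since $\mu_{k}\sim k^{2}$ (dominated by $\omega_{0}(1-2\gamma L)k^{2}>0$) and the coefficients of $\left\lVert z_{k+1}-z_{k}\right\rVert^{2}$ and $\left\lVert v_{k+1}\right\rVert^{2}$ in $D_{k}$ are $\Theta(k)$. For~(ii), the lower bound \eqref{ene:low} together with boundedness of $\cG_{\lambda,s,k}$ gives immediately $k\left\lVert z_{k}-z_{k-1}\right\rVert=O(1)$ and $\left\lVert 4\lambda(z_{k}-z_{\ast})+2k(z_{k}-z_{k-1})+2s\gamma k v_{k}\right\rVert=O(1)$; combined with boundedness of $(z_{k})$ this forces $k\left\lVert v_{k}\right\rVert=O(1)$. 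Plugging these into \eqref{split:d-u} produces $k\left\lVert v_{k+1}-v_{k}\right\rVert=O(1)$, and \eqref{split:Lip} then transfers this rate to $\left\lVert \xi_{k}+F(z_{k})\right\rVert=O(k^{-1})$; the last inner-product bound is Cauchy--Schwarz.

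\textbf{Statement (iii) and main obstacle.} Convergence of $(\cG_{\lambda,s,k})$ follows from the quasi-Fej\'er inequality above, rewritten additively as $\cG_{\lambda,s,k+1}\le\cG_{\lambda,s,k}+\varepsilon_{k}$ with $\sum\varepsilon_{k}<+\infty$ and $\cG_{\lambda,s,k}\ge 0$, via the standard Robbins--Siegmund lemma. To obtain convergence of $(\cE_{\lambda,s,k})$ from \eqref{split:defi:F}, I would estimate the correction $\cE_{\lambda,s,k}-\cG_{\lambda,s,k}$: using $z_{k}-w_{k-1}=-\gamma(v_{k}-v_{k-1})$, the quadratic term in $\left\lVert v_{k}-v_{k-1}\right\rVert^{2}$ has coefficient $O(k^{3/2})$ and is therefore $O(k^{-1/2})\to 0$ by~(ii), while the cross term $k^{2}\langle z_{k}-z_{k-1},F(z_{k})-F(w_{k-1})\rangle$ is handled by a telescoping argument on its increments, whose key input is the Cauchy--Schwarz consequence $\sum k^{3/2}\left\lVert z_{k+1}-z_{k}\right\rVert\left\lVert v_{k+1}-v_{k}\right\rVert<+\infty$ of the summability bounds in~(i). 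The main obstacle is precisely this cross term: the natural pointwise bounds only give $O(1)$ boundedness, so convergence requires isolating a summable increment rather than attempting absolute summability of the sequence itself.
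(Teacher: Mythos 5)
Your route for parts (i), (ii) and for the convergence of $\left( \cG_{\lambda,s,k} \right)_{k \geq 2}$ is essentially the paper's: fix $s$ and $\delta$ as in Lemma~\ref{lem:trunc}, pick $\lambda \in \left(\underline{\lambda},\overline{\lambda}\right)$ so that the coefficient $\alpha - 1 - \frac{4(\alpha-1)}{s\alpha}\lambda$ in \eqref{ene:low} is positive, insert $R_{k}\leq 0$ and $\mu_{k}\geq 0$ into Lemma~\ref{lem:reg}(i), absorb the $\left\lVert z_{k+1}-z_{\ast}\right\rVert^{2}$ term through the coercive lower bound, and run a quasi-Fej\'er/Robbins--Siegmund argument (the paper invokes Lemma~\ref{lem:quasi-Fej}); your derivation of $k\left\lVert v_{k+1}-v_{k}\right\rVert=\bO(1)$ from \eqref{split:d-u} is a harmless variant, and the rates in (ii) follow as in the paper.

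The genuine gap is in the last step of (iii), the passage from $\lim_{k}\cG_{\lambda,s,k}$ to $\lim_{k}\cE_{\lambda,s,k}$. First, the coefficient of $\left\lVert v_{k}-v_{k-1}\right\rVert^{2}$ in \eqref{split:defi:F} is $\gamma^{3}L\big((2-s)k+2(\alpha-c)\big)\big(k+\alpha-c+c\gamma L\sqrt{(2-s)k+2(\alpha-c)}\big)=\Theta(k^{2})$, not $\bO(k^{3/2})$; with only the bound $k\left\lVert v_{k}-v_{k-1}\right\rVert=\bO(1)$ that you derived in (ii), this term is merely $\bO(1)$ and does not vanish. Second, your treatment of the cross term $2\gamma\big((2-s)k+2(\alpha-c)\big)k\left\langle z_{k}-z_{k-1},F(z_{k})-F(w_{k-1})\right\rangle$ is only a sketch (``telescoping on its increments''), and the stated key input $\sum_{k}k^{3/2}\left\lVert z_{k+1}-z_{k}\right\rVert\left\lVert v_{k+1}-v_{k}\right\rVert<+\infty$ is not sufficient by itself: summability of $k^{3/2}b_{k}$ does not force $k^{2}b_{k}\to 0$, so convergence of this $\Theta(k^{2})$-weighted inner product does not follow from it. The missing --- and much simpler --- observation, which is exactly what the paper uses, is that the summability $\sum_{k}k^{2}\left\lVert v_{k+1}-v_{k}\right\rVert^{2}<+\infty$ from part (i) already gives $k\left\lVert v_{k+1}-v_{k}\right\rVert\to 0$; combined with $k\left\lVert z_{k}-z_{k-1}\right\rVert\leq C_{2}$ and \eqref{split:Lip} (which yields $\left\lVert F(z_{k})-F(w_{k-1})\right\rVert\leq\left\lVert v_{k}-v_{k-1}\right\rVert$), both correction terms in $\cE_{\lambda,s,k}-\cG_{\lambda,s,k}$ tend to zero pointwise, whence $\lim_{k}\cE_{\lambda,s,k}=\lim_{k}\cG_{\lambda,s,k}$ with no telescoping needed. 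So the alleged ``main obstacle'' dissolves once the $o(1)$ (rather than $\bO(1)$) decay of $k\left\lVert v_{k+1}-v_{k}\right\rVert$ provided by (i) is exploited.
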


\begin{proof}
Let $s \in \left(1+ \frac{\alpha}{4 c - \alpha}, 2 \right)$, and $\delta \in (0,1)$ such that \eqref{condi:delta} is satisfied. According to Lemma~\ref{lem:trunc}(i) there exist $0 \leq \underline{\lambda} \left( \alpha,c,s \right) < \overline{\lambda} \left( \alpha,c,s \right) \leq \frac{s\alpha}{4}$ such that for all $ \lambda \in \left(\underline{\lambda} \left( \alpha,c,s \right), \overline{\lambda} \left( \alpha,c,s \right)\right)$ there exists an integer ${ k_{\lambda}} \geq 1$ with the property that \eqref{Rk} holds for all $k \geq { k_{\lambda}}$. In addition, according to Lemma~\ref{lem:trunc}(ii), we get a positive integer ${ k_{2}} \geq 2$ such that \eqref{muk} holds for all $k \geq { k_{2}}$.

This means that for all $k \geq { k_{0}} := \max \left\lbrace { k_{\lambda}},k_{1},  { k_{2}} \right\rbrace$, where $k_{1}$ is the positive integer given by Lemma \ref{lem:reg}(ii), according to Lemma \ref{lem:reg}(i) it holds
\begin{equation*}
\begin{split}
& \ \cG_{\lambda,s,k+1} - \cG_{\lambda,s,k}\\
\leq & \ \frac{4\left( c - 1 \right)^2}{\left( k+1 \right) \sqrt{k+1}}  \lambda ^{2} \left\lVert z_{k+1} - z_{\ast} \right\rVert ^{2}- 4 \left( c - 1 \right) \lambda \gamma \left\langle z_{k+1} - z_{\ast}, \xi_{k+1} + F \left( z_{k+1} \right) \right\rangle\\
& \ -  \mu_{k} \gamma^{2} \left\lVert v_{k+1} - v_{k} \right\rVert ^{2} + \left( 1 - \delta\right)\gamma^2 \left( \omega_{4} k + c\sqrt{\omega_{0}k + 2 \omega_{6}}+ s \omega_{6}\right)  \left\lVert v_{k+1} \right\rVert ^{2}\\
& \ + 2\left( 1 - \delta\right)\left( \omega_{3} k + \sqrt{\omega_{5}(k + 1) + \omega_{7}} \right) \left\lVert z_{k+1} - z_{k} \right\rVert ^{2}.
\end{split}
\end{equation*}
Since $\omega_{3}, \omega_{4} < 0$, there exists $k_{3} \geq k_{0}$ such that for all $k \geq  k_{3}$
\begin{equation}\label{ineqFk}
\begin{split}
\cG_{\lambda,s,k+1}
\leq & \ \cG_{\lambda,s,k} + \frac{4\left( c - 1 \right)^2}{\left( k+1 \right) \sqrt{k+1}}  \lambda ^{2} \left\lVert z_{k+1} - z_{\ast} \right\rVert ^{2}- 4 \left( c - 1 \right) \lambda \gamma \left\langle z_{k+1} - z_{\ast}, \xi_{k+1} + F \left( z_{k+1} \right) \right\rangle\\
& \ -  \mu_{k} \gamma^{2} \left\lVert v_{k+1} - v_{k} \right\rVert ^{2} + \frac{1}{2}\left( 1 - \delta\right)\gamma^2 \omega_{4} k   \left\lVert v_{k+1} \right\rVert ^{2}
+ \left( 1 - \delta\right)\omega_{3} k\left\lVert z_{k+1} - z_{k} \right\rVert ^{2}.
\end{split}
\end{equation}
In view of \eqref{ene:low}, we get that $\cG_{\lambda,s,k} \geq 0$ for every $k \geq 2$. By setting
\begin{equation*}
C_{0} := 2\lambda \left( c - 1 \right)^2 \left( \alpha -1 - \frac{ 4 \left(\alpha - 1 \right)}{s \alpha}\lambda\right) ^{-1} > 0,
\end{equation*}
it holds for all $k \geq 1$
\begin{align*}
& \ \frac{4\left( c - 1 \right)^2}{\left( k+1 \right) \sqrt{k+1}}  \lambda ^{2} \left\lVert z_{k+1} - z_{\ast} \right\rVert ^{2}
= \dfrac{C_{0}}{\left( k+1 \right) \sqrt{k+1}} \cdot 2 \lambda \left( \alpha - 1 - \dfrac{4\left( \alpha - 1 \right)}{s\alpha} \lambda\right) \left\lVert z_{k+1} - z_{\ast} \right\rVert ^{2} \nonumber \\
\leq & \ \dfrac{C_{0}}{\left( k+1 \right) \sqrt{k+1}} \cG_{\lambda,s,k+1}.
\end{align*}
Under these premises, we deduce from \eqref{ineqFk} that for all $k \geq  k_{3}$
\begin{equation}\label{inq:Fk-re}
\begin{split}
\left( 1 - \dfrac{C_{0}}{\left( k+1 \right) \sqrt{k+1}} \right) \cG_{\lambda, s, k+1}
\leq & \ \cG_{\lambda, s, k} - 4 \left( c - 1 \right) \lambda \gamma \left\langle z_{k+1} - z_{\ast}, \xi_{k+1} + F \left( z_{k+1} \right) \right\rangle\\
& \ -  \mu_{k} \gamma^{2} \left\lVert v_{k+1} - v_{k} \right\rVert ^{2}\\
& \ + \frac{1}{2}\left( 1 - \delta\right)\gamma^2 \omega_{4} k   \left\lVert v_{k+1} \right\rVert ^{2}
+ \left( 1 - \delta\right)\omega_{3} k\left\lVert z_{k+1} - z_{k} \right\rVert ^{2}.
\end{split}
\end{equation}
Choosing $ k_{4} := \max \left\lbrace k_{3}, \left\lceil C_{0}^{\frac{2}{3}}- 1 \right\rceil \right\rbrace$, we have that for all $ k \geq  k_{4} $
\begin{equation*}
\left( 1 - \dfrac{C_{0}}{\left( k+1 \right) \sqrt{k+1}} \right) ^{-1} = \dfrac{\left( k+1 \right) \sqrt{k+1}}{\left( k+1 \right) \sqrt{k+1} - C_{0}} = 1 + \dfrac{C_{0}}{\left( k+1 \right) \sqrt{k+1} - C_{0}} > 1.
\end{equation*}
Hence, using the monotonicity of $M+F$ and that $\omega_3, \omega_4 <0$, \eqref{inq:Fk-re} leads for all $k \geq { k_{4}}$ to
\begin{align*}
\cG_{\lambda,s, k+1} \leq & \ \left( 1 + \dfrac{C_{0}}{\left( k+1 \right) \sqrt{k+1} - C_{0}} \right) \cG_{\lambda,s,k} - 4 \left( c - 1 \right) \lambda \gamma \left\langle z_{k+1} - z_{\ast}, \xi_{k+1} + F \left( z_{k+1} \right) \right\rangle\\
& \ -  \mu_{k} \gamma^{2} \left\lVert v_{k+1} - v_{k} \right\rVert ^{2} + \frac{1}{2}\left( 1 - \delta\right)\gamma^2 \omega_{4} k   \left\lVert v_{k+1} \right\rVert ^{2}
+ \left( 1 - \delta\right)\omega_{3} k\left\lVert z_{k+1} - z_{k} \right\rVert ^{2}.
\end{align*}
Denoting
\begin{align*}
b_{\lambda, s, k} 	:= & \ 4 \left( c - 1 \right) \lambda \gamma \left\langle z_{k+1} - z_{\ast}, \xi_{k+1} + F \left( z_{k+1} \right) \right\rangle+  \mu_{k} \gamma^{2} \left\lVert v_{k+1} - v_{k} \right\rVert ^{2}\\
& \ - \frac{1}{2}\left( 1 - \delta\right)\gamma^2 \omega_{4} k   \left\lVert v_{k+1} \right\rVert ^{2}
-\left( 1 - \delta\right)\omega_{3} k\left\lVert z_{k+1} - z_{k} \right\rVert ^{2}\\
\geq & \ 0, \\
d_{\lambda, s, k} 	& := \ \dfrac{C_{0}}{\left( k+1 \right) \sqrt{k+1} - C_{0}} > 0,
\end{align*}
we see that we are in the context of Lemma~\ref{lem:quasi-Fej}. From here we get the summability statements in (i) as well as the convergence of the sequence $\left( \cG_{\lambda,s,k} \right)_{k \geq 2}$.

Since $\left( \cG_{\lambda,s,k}  \right) _{k \geq 2}$ converges, it is also bounded from above, thus, for all $k \geq 2$
\begin{align*}
& \ \frac{1}{4s}\omega_{0} \left\lVert 4 \lambda \left( z_{k} - z_{\ast} \right) + 2k \left( z_{k} - z_{k-1} \right) + 2s \gamma k v_{k} \right\rVert ^{2} \nonumber\\
& \ + \frac{\omega_{0}^2}{4s}k^{2} \left\lVert z_{k} - z_{k-1} \right\rVert ^{2}+ 2  \lambda \left( \alpha - 1 -\frac{4\left(\alpha -1\right)}{s\alpha}\lambda \right)\left\lVert z_{k} - z_{\ast} \right\rVert ^{2}\\
\leq & \ \cG_{\lambda,s, k} \leq \sup_{k \geq 1} \cG_{\lambda,s, k} < +\infty.
\end{align*}
From here we obtain that the sequences
\begin{equation*}
\begin{gathered}
\left( 4 \lambda \left( z_{k} - z_{\ast} \right) + 2k \left( z_{k} - z_{k-1} \right) + 2s \gamma k v_{k} \right) _{k \geq 1}, \quad \left( k \left( z_{k} - z_{k-1} \right) \right) _{k \geq 1}
\quad \textrm{and} \quad
\left(z_{k} \right) _{k \geq 0}
\end{gathered}
\end{equation*}
are bounded. In particular, for all $k \geq 2$
\begin{align}
\left\lVert 4 \lambda \left( z_{k} - z_{\ast} \right) + 2k \left( z_{k} - z_{k-1} \right) + 2s\gamma k v_{k} \right\rVert
\leq & \ C_{1} := \sqrt{\dfrac{4s}{\omega_{0}} \sup_{k \geq 1} \cG_{\lambda,k}}, \nonumber\\
k \left\lVert z_{k} - z_{k-1} \right\rVert
\leq & \ C_{2} :=  \sqrt{\dfrac{4s}{\omega_{0}^2} \sup_{k \geq 1} \cG_{\lambda,k}}, \nonumber\\
\left\lVert z_{k} - z_{\ast} \right\rVert
\leq & \ C_{3} := \sqrt{\frac{1}{2  \lambda \left( \alpha - 1 -\frac{4\left(\alpha -1\right)}{s\alpha}\lambda \right)} \sup_{k \geq 1} \cG_{\lambda,k}},\label{ineq-C3}
\end{align}
therefore
\begin{align}\label{ineq-C4}
\left\lVert v_{k} \right\rVert
\leq & \ \dfrac{1}{2 s \gamma k} \left\lVert 4 \lambda \left( z_{k} - z_{\ast} \right) + 2k \left( z_{k} - z_{k-1} \right) + 2s \gamma k v_{k} \right\rVert \nonumber \\
& \ + \dfrac{1}{s \gamma} \left\lVert z_{k} - z_{k-1} \right\rVert + \dfrac{2 \lambda}{s\gamma k} \left\lVert z_{k} - z_{\ast} \right\rVert \leq \dfrac{C_{4}}{k},
\end{align}
where
\begin{equation*}
C_{4} := \dfrac{1}{2 s\gamma} \left( C_{1} + 2C_{2} + 4 \overline{\lambda} \left( \alpha,c,s \right) C_{3} \right) > 0 .
\end{equation*}
From (i) we have
\begin{equation}
\lim\limits_{k \to + \infty} k \left\lVert v_{k+1} - v_{k} \right\rVert = 0
\quad \Rightarrow \quad
C_{5} := \sup_{k \geq 1} \left\lbrace k \left\lVert v_{k+1} - v_{k} \right\rVert \right\rbrace < + \infty, \label{split:lim:dV-0}
\end{equation}
which, together with \eqref{split:Lip}, implies that  for all $k \geq 1$
\begin{equation}
\begin{split}
\left\lVert \xi_{k+1} + F \left( z_{k+1} \right) \right\rVert
&\leq \left\lVert \xi_{k+1} + F \left( z_{k+1} \right) - v_{k+1} \right\rVert + \left\lVert v_{k+1} \right\rVert\\
&\leq \left\lVert v_{k+1} - v_{k} \right\rVert + \left\lVert v_{k+1} \right\rVert \label{split:lim:V-dV}
\leq \dfrac{C_{6}}{k},
\end{split}
\end{equation}
where
\begin{equation*}
C_{6} := C_{4} + C_{5} > 0 .
\end{equation*}
The Cauchy-Schwarz inequality and the boundedness of $\left( z_{k} \right) _{k \geq 0}$ allow us to provide a similar estimate for $\left\langle z_{k} - z_{\ast}, \xi_{k} + F \left( z_{k} \right) \right\rangle$. This proves (ii).
To complete the proof, we are going to show that
\begin{equation*}
\lim\limits_{k \to + \infty} \cE_{\lambda,s,k} = \lim\limits_{k \to + \infty} \cG_{\lambda,s,k}  \in \R .
\end{equation*}
Indeed, we have already seen that
\begin{equation*}
\lim\limits_{k \to + \infty} k \left\lVert v_{k} - v_{k-1} \right\rVert = 0,
\end{equation*}
which, by the Cauchy-Schwarz inequality and \eqref{split:Lip}, yields
\begin{align*}
0 \leq \lim\limits_{k \to + \infty} k^{2} \left\lvert \left\langle z_{k} - z_{k-1}, F \left( z_{k} \right) - F \left( w_{k-1} \right) \right\rangle \right\rvert
& \leq C_{2} \lim\limits_{k \to + \infty} k \left\lVert F \left( z_{k} \right) - F \left( w_{k-1} \right) \right\rVert \nonumber \\
& \leq C_{2} \lim\limits_{k \to + \infty} k \left\lVert v_{k} - v_{k-1} \right\rVert = 0 .
\end{align*}
It is from here that we get the statement we want.
\end{proof}

Next we will prove the convergence of the sequence of iterates generated by Algorithm~\ref{algo:im}.

\begin{thm}\label{thm:conv}
Let $\zer(M+F) \neq \emptyset$. The sequence $\left(z_{k} \right)_{k \geq 0}$ generated by Algorithm~\ref{algo:im} converges weakly to a solution of \eqref{intro:pb:eq}.
\end{thm}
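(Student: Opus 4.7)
My plan is to apply Opial's lemma in its weak form (sometimes attributed to Passty): if $\left( z_k \right)_{k \geq 0}$ is bounded, every weak sequential cluster point belongs to $\zer (M+F)$, and $\lim_{k \to +\infty} \bigl( \left\lVert z_k - z_1 \right\rVert^2 - \left\lVert z_k - z_2 \right\rVert^2 \bigr)$ exists for all $z_1, z_2 \in \zer(M+F)$, then $\left( z_k \right)_{k \geq 0}$ converges weakly to an element of $\zer(M+F)$.

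The boundedness and cluster-point conditions are the easy parts. By Proposition~\ref{prop:split:lim}(ii), $\left( z_k \right)_{k \geq 0}$ is bounded and $\left\lVert \xi_k + F(z_k) \right\rVert = \bO(1/k) \to 0$ with $\xi_k \in M(z_k)$ (see \eqref{algo:inc-M}). Since $F$ is monotone and Lipschitz with full domain, $M + F$ is maximally monotone, hence its graph is sequentially closed in the weak-strong topology of $\cH \times \cH$. Any weak cluster point $\bar{z}$ of $\left( z_k \right)_{k \geq 0}$ therefore satisfies $0 \in \left( M+F \right) \left( \bar z \right)$.

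For the remaining condition I exploit the affinity of $\cE_{\lambda, s, k}$ in $\lambda$: a direct expansion shows that the $\lambda^2$ contributions from $\frac{1}{2}\|u_{\lambda,s,k}\|^2$ and from $2\lambda(\alpha-1-\lambda)\|z_k-z_\ast\|^2$ cancel, leaving
\begin{equation*}
\cE_{\lambda,s,k} = 2 \lambda \phi_k (z_{\ast}) + \theta_k, \qquad \phi_k(z_\ast) := (\alpha - 1) \left\lVert z_k - z_\ast \right\rVert^2 + \langle z_k - z_\ast, A_k \rangle,
\end{equation*}
where $A_k := 2 k \left( z_k - z_{k-1} \right) + 2 \gamma (k + \alpha - c) v_k$ and $\theta_k$ depends only on $\left( z_k - z_{k-1}, v_k \right)$ (not on $\lambda$ or $z_\ast$). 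By Proposition~\ref{prop:split:lim}(iii), $\cE_{\lambda,s,k}$ converges for $\lambda$ in a nonempty open interval; choosing two distinct values $\lambda_1 \neq \lambda_2$ and subtracting yields convergence of $\phi_k(z_\ast)$ for every $z_\ast \in \zer(M+F)$. For $z_1, z_2 \in \zer(M+F)$, one has
\begin{equation*}
\phi_k(z_1) - \phi_k(z_2) = (\alpha - 1) \bigl( \left\lVert z_k - z_1 \right\rVert^2 - \left\lVert z_k - z_2 \right\rVert^2 \bigr) + \langle z_2 - z_1, A_k \rangle,
\end{equation*}
and it suffices to show $\langle z_2 - z_1, A_k \rangle \to 0$.

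The key tool is the recursion $B_{k+1} = \frac{k+1}{k+\alpha} B_k + \gamma (\alpha - c) v_{k+1}$, where $B_k := A_k / 2$, which one can obtain directly from \eqref{split:d-u}. Testing against $y := z_2 - z_1$ gives the scalar recursion $\beta_{k+1} = \eta_k \beta_k + \delta_{k+1}$ with $\eta_k := \frac{k+1}{k+\alpha}$ and $\delta_{k+1} := \gamma(\alpha-c) \langle y, v_{k+1} \rangle$. Since $\alpha > 1$, $\sum_k (1 - \eta_k) = + \infty$, hence $\prod_k \eta_k = 0$, and a standard Toeplitz-type argument reduces the claim $\beta_k \to 0$ to the absolute summability $\sum_k |\delta_k| < +\infty$. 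This is the main technical point. Splitting $v_k = \left( F(z_k) + \xi_k \right) + \left( F(w_{k-1}) - F(z_k) \right)$, the second piece is bounded by $L\gamma \left\lVert v_k - v_{k-1} \right\rVert$, and Cauchy-Schwarz applied to $\sum_k k^2 \left\lVert v_k - v_{k-1} \right\rVert^2 < + \infty$ (Proposition~\ref{prop:split:lim}(i)) and $\sum_k 1/k^2 < + \infty$ gives $\sum_k \left\lVert v_k - v_{k-1} \right\rVert < + \infty$; for the first piece, the identity
\begin{equation*}
\langle z_2 - z_1, F(z_k) + \xi_k \rangle = \langle z_k - z_1, F(z_k) + \xi_k \rangle - \langle z_k - z_2, F(z_k) + \xi_k \rangle
\end{equation*}
combined with monotonicity exhibits it as the difference of two nonnegative summable sequences, again by Proposition~\ref{prop:split:lim}(i) applied to $z_1$ and to $z_2$. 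Consequently $\langle z_2 - z_1, A_k \rangle \to 0$, hence $\left\lVert z_k - z_1 \right\rVert^2 - \left\lVert z_k - z_2 \right\rVert^2$ converges, and Opial's lemma yields the weak convergence of $\left( z_k \right)_{k \geq 0}$ to some element of $\zer(M+F)$.
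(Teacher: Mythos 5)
Your proposal is correct, and up to the pivotal observation it shares the paper's skeleton: both arguments extract the $\lambda$-linear part of $\cE_{\lambda,s,k}$ by subtracting the energies at two admissible values $\lambda_1\neq\lambda_2$ (your $\phi_k(z_\ast)$ is exactly $2p_k+2\gamma(\alpha-c)\langle z_k-z_\ast,v_k\rangle$ in the paper's notation), both settle the cluster-point condition through the maximal monotonicity of $M+F$, and both feed on the summability statements of Proposition~\ref{prop:split:lim}(i) together with the weighted Cauchy--Schwarz trick with weights $k^{-2}$ and $k^{2}$. Where you genuinely diverge is in how the Fej\'er-type condition is closed. The paper verifies the hypothesis of the standard Opial lemma (Lemma~\ref{lem:opial}), namely that $\lim_{k}\left\lVert z_k-z_\ast\right\rVert$ exists for every single solution $z_\ast$: it introduces $q_k=\tfrac12\left\lVert z_k-z_\ast\right\rVert^2+\gamma\sum_{i=1}^{k}\langle z_i-z_\ast,v_i\rangle$, proves convergence of the series, and then invokes the auxiliary Lemma~\ref{lem:lim-u-k} applied to $(\alpha-1)q_k+k(q_k-q_{k-1})$. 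You instead verify the weaker Opial--Passty condition on the differences $\left\lVert z_k-z_1\right\rVert^2-\left\lVert z_k-z_2\right\rVert^2$, reducing it, via the exact recursion $B_{k+1}=\frac{k+1}{k+\alpha}B_k+\gamma(\alpha-c)v_{k+1}$ for $B_k=k(z_k-z_{k-1})+\gamma(k+\alpha-c)v_k$ (a correct consequence of \eqref{split:d-u}) and a Toeplitz-type summation, to the absolute summability $\sum_k\left\lvert\langle z_2-z_1,v_k\rangle\right\rvert<+\infty$, which you obtain by applying Proposition~\ref{prop:split:lim}(i) at both solutions and by $\sum_k\left\lVert v_k-v_{k-1}\right\rVert<+\infty$. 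Each route buys something: the paper's argument delivers the stronger fact that $\left\lVert z_k-z_\ast\right\rVert$ converges for every solution, at the price of the auxiliary sequence $q_k$ and Lemma~\ref{lem:lim-u-k}; your argument dispenses with that lemma and with $q_k$, but relies on the Passty variant of Opial's lemma (not the version stated in the appendix, though its proof is a short cluster-point computation that you could spell out for completeness) and on the additional observation of the $B_k$-recursion.
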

\begin{proof}
Let $z_\ast \in \zer(M+F)$. Further, let $s \in \left(1+ \frac{\alpha}{4 c - \alpha}, 2 \right)$, and $0 \leq \underline{\lambda} \left( \alpha,c, s \right) < \overline{\lambda} \left( \alpha,c ,s \right) \leq \frac{s\alpha}{4}$ be the parameters provided by Proposition \ref{prop:split:lim}(iii) with the property that, for all $\underline{\lambda} \left( \alpha,c,s \right) < \lambda < \overline{\lambda} \left( \alpha,c,s \right)$, the sequence $\left( \cE_{\lambda,s,k} \right)_{k \geq 1}$ is convergent.

For all $k \geq 1$ and any $\lambda \in \left(\underline{\lambda} \left( \alpha,c,s \right), \overline{\lambda} \left( \alpha,c,s \right)\right)$ we have
\begin{equation}\label{defi:E-lambda}
\begin{split}
\cE_{\lambda,s, k}
= & \ \dfrac{1}{2} \left\lVert 2 \lambda \left( z_{k} - z_{\ast} \right) + 2k \left( z_{k} - z_{k-1} \right) + s \gamma k v_{k} \right\rVert ^{2}
+ 2 \lambda \left( \alpha - 1 - \lambda \right) \left\lVert z_{k} - z_{\ast} \right\rVert ^{2} \\
& + 2\lambda \gamma \left(\left(2 - s \right)k + 2\left( \alpha - c \right) \right) \left\langle z_{k} - z_{\ast}, v_{k} \right\rangle + \dfrac{1}{2} \gamma^{2}  \left( \left( 2 -s \right)k + 2\left( \alpha - c \right)\right)\left(sk + 2c\right) \left\lVert v_{k} \right\rVert ^{2} \\
= & \ 2 \lambda \left( \alpha - 1 \right) \left\lVert z_{k} - z_{\ast} \right\rVert ^{2} + 4 \lambda k \left\langle z_{k} - z_{\ast}, z_{k} - z_{k-1} +  \gamma v_{k} \right\rangle  \\
& + \dfrac{k^{2}}{2}  \left\lVert 2 \left( z_{k} - z_{k-1} \right) + s \gamma v_{k} \right\rVert ^{2} + 4\left(\alpha - c \right) \lambda \gamma \langle z_{k} - z_{\ast}, v_{k} \rangle \\
& + \frac{1}{2}\gamma^2\big(\left( 2 - s\right)k + 2\left( \alpha - c \right)\big)\left(sk + 2c\right) \left\lVert v_{k} \right\rVert ^{2}.
\end{split}
\end{equation}
We choose $\underline{\lambda} \left( \alpha,c,s \right) < \lambda_{1} < \lambda_{2} < \overline{\lambda} \left( \alpha,c,s \right)$, and get
\begin{align*}
& \ \cE_{\lambda_{2},s,k} - \cE_{\lambda_{1},s,k} \\
= & \ 4 \left( \lambda_{2} - \lambda_{1} \right) \bigg( \dfrac{1}{2} \left( \alpha - 1 \right) \left\lVert z_{k} - z_{\ast} \right\rVert ^{2} +   k \left\langle z_{k} - z_{\ast}, z_{k} - z_{k-1}  +
\gamma v_{k} \right\rangle + \gamma \left(\alpha - c \right)\langle z_k - z_{\ast}, v_k\rangle\bigg) \nonumber \\
= & \ 4 \left( \lambda_{2} - \lambda_{1} \right)\big( p_{k} + \gamma \left(\alpha - c \right)\langle z_k - z_{\ast}, v_k\rangle\big),
\end{align*}
where for all $k \geq 1$
\begin{equation}
\label{split:defi:p-k}
p_{k} := \dfrac{1}{2} \left( \alpha - 1 \right) \left\lVert z_{k} - z_{\ast} \right\rVert ^{2} + k \left\langle z_{k} - z_{\ast}, z_{k} - z_{k-1} + \gamma v_{k} \right\rangle.
\end{equation}
According to \eqref{ineq-C3} and \eqref{ineq-C4}, we have
\begin{align}\label{deczk}
0 \leq \lim_{k\rightarrow +\infty} | \langle z_k-z_{\ast}, v_k\rangle |
\leq C_3\lim_{k\rightarrow +\infty}\left\lVert v_{k} \right\rVert=0,
\end{align}
which, together with the fact that the limit $\lim_{k \to + \infty} \left(\cE_{\lambda_{2},s,k} - \cE_{\lambda_{1},s,k} \right)\in \R$ exists, leads to
\begin{equation}
\label{split:lim-p-k}
\lim\limits_{k \to + \infty} p_{k} \in \R \textrm{ exists}.
\end{equation}

We define for all $k \geq 1$
\begin{equation*}
\label{split:defi:q-k}
q_{k} := \dfrac{1}{2} \left\lVert z_{k} - z_{\ast} \right\rVert ^{2} +  \gamma \sum_{i = 1}^{k} \left\langle z_{i} - z_{\ast}, v_{i} \right\rangle,
\end{equation*}
and notice that for all $k \geq 2$ it holds
\begin{align*}
q_{k} - q_{k-1}
& = \left\langle z_{k} - z_{\ast}, z_{k} - z_{k-1} \right\rangle - \dfrac{1}{2} \left\lVert z_{k} - z_{k-1} \right\rVert ^{2} +  \gamma \left\langle z_{k} - z_{\ast}, v_{k} \right\rangle,
\end{align*}
thus
\begin{equation*}
\left( \alpha - 1 \right) q_{k} + k \left( q_{k} - q_{k-1} \right) = p_{k} +  \left( \alpha - 1 \right) \gamma \sum_{i = 1}^{k} \left\langle z_{i} - z_{\ast}, v_{i} \right\rangle - \dfrac{k}{2} \left\lVert z_{k} - z_{k-1} \right\rVert ^{2} .
\end{equation*}
Thanks to Proposition \ref{prop:split:lim}(i), we have $\lim_{k \to + \infty} k \left\lVert z_{k+1} - z_{k} \right\rVert ^{2} = 0$.
So if we can show that  the sequence $\left( \sum_{i = 1}^{k} \left\langle z_{i} - z_{\ast}, v_{i} \right\rangle \right)_{k \geq 1}$ also converges, then we know that
\begin{equation}
\label{conv:q:dq}
\lim\limits_{k \to + \infty} \left( \alpha - 1 \right) q_{k} + k \left( q_{k} - q_{k-1} \right) \in \R \textrm{ exists} .
\end{equation}
To show that the series above converges, we first observe that for every $k \geq 2$
\begin{align}
\sum_{i = 2}^{k} \left\lvert \left\langle z_{i} - z_{\ast}, F \left( w_{i-1} \right) - F \left( z_{i} \right) \right\rangle \right\rvert
& \leq \sum_{i = 2}^{k} \left\lVert z_{i} - z_{\ast} \right\rVert \left\lVert F \left( w_{i-1} \right) - F \left( z_{i} \right) \right\rVert \label{conv:inn:C-S} \\
&  \leq \frac{1}{2} \sum_{i=2}^{k} \frac{1}{i^{2}} \normsq{z_{i} - z_{\ast}} + \frac{1}{2} \sum_{i=2}^{k} i^{2}  \normsq{F(w_{i-1}) - F(z_{i})}  \nonumber\\
&  \leq \frac{1}{2} \sum_{i \geq 2} \frac{1}{i^{2}} \normsq{z_{i} - z_{\ast}} + \frac{1}{2} \sum_{i \geq 2} i^{2} \normsq{F(w_{i-1}) - F(z_{i})} \nonumber\\
&< + \infty, \label{conv:inn:sup}
\end{align}
where the first series in~\eqref{conv:inn:sup} converges due to~\eqref{ineq-C3}, and the second series converges due to~\eqref{split:Lip} and Proposition \ref{prop:split:lim}(i). This proves that the series $\sum_{k \geq 2} \left \langle z_{k} - z_{\ast}, F \left( w_{k-1} \right) - F \left( z_{k} \right) \right\rangle$ is absolutely convergent, so convergent. Again using Proposition \ref{prop:split:lim}(i), we see that the limit
\begin{equation*}
\begin{split}
\MoveEqLeft \lim_{k \to + \infty} \sum_{i = 1}^{k} \left\langle z_{i} - z_{\ast}, v_{i} \right\rangle = \lim_{k \to + \infty} \sum_{i = 1}^{k} \left\langle z_{i} - z_{\ast}, \xi_{i} + F \left( z_{i} \right) \right\rangle + \lim_{k \to + \infty} \sum_{i = 1}^{k} \left\langle z_{i} - z_{\ast}, F \left( w_{i-1} \right) - F \left( z_{i} \right) \right\rangle \in \R
\end{split}
\end{equation*}
exists. Consequently, \eqref{conv:q:dq} holds. Therefore, we can apply Lemma~\ref{lem:lim-u-k} to guarantee that the limit $\lim_{k \to + \infty} q_{k} \in \R$ also exists. The required boundedness of $\left(z_{k} \right)_{k \geq 0}$ follows from Proposition~\ref{prop:split:lim}(ii) and the fact that $\lim_{k \to + \infty} \sum_{i = 1}^{k} \left\langle z_{i} - z_{\ast}, v_{i} \right\rangle \in \R$ exists.  Given the definition of $(q_{k})_{k \geq 1}$, the latter property also guarantees that $\lim_{k \to + \infty} \left\lVert z_{k} - z_{\ast} \right\rVert \in \R$ exists. The hypothesis (i) in the Opial Lemma (see Lemma~\ref{lem:opial}) is therefore fulfilled.

Let $z$ be a weak sequential cluster point of $\left(z_{k} \right)_{k \geq 0}$, which means that there exists a subsequence $\left (z_{k_{l}} \right )_{l \geq 0}$ which converges weakly to $z$ as $l \to +\infty$. It follows from Proposition~\ref{prop:split:lim}(ii) that $\xi_{k_{l}} + F \left( z_{k_{l}} \right)$ strongly converges to $0$ as $l \to +\infty$.  Since $F \colon \mathcal{H} \rightarrow \mathcal{H} $ is a single-valued monotone and $L$-Lipschitz continuous operator,  it is a maximally monotone operator (see \cite[Corollary 20.28]{BauschkeCombettes2}) with full domain.  Therfore,  the sum $M+F$ is also a maximally monotone operator, (see \cite[Corollary 25.5]{BauschkeCombettes2}), given that $M$ is maximally monotone.  The fact that $\xi_{k_{l}} \in M(z_{k_{l}})$ for all $l \geq 0$, and the maximal monotonicity of $M + F$ implies that $0 \in \left( M + F \right) \left( z \right)$, see \cite[Proposition 20.33]{BauschkeCombettes2}, meaning that hypothesis (ii) of Lemma~\ref{lem:opial} is also verified. The weak convergence of the iterates to an element in $\zer(M+F)$ is therefore a consequence of the Opial Lemma.
\end{proof}

We will conclude the convergence analysis by proving that the Fast RFB algorithm does indeed have convergence rates of $o(\frac{1}{k})$ as $k \to +\infty$.

\begin{thm}\label{thm:o-rates}
Let $z_{\ast} \in \zer(M+F)$ and $\left(z_{k} \right)_{k \geq 0}$ be the sequence generated by Algorithm~\ref{algo:im}. The following holds as $ k \to +\infty $
\begin{align*}
\norm{z_{k} - z_{k-1}} =  o \left( \dfrac{1}{k} \right), \quad \|\xi_k + F(z_k) \|=  o \left( \frac{1}{k} \right), \quad \langle\xi_{k} + F(z_{k}),z_{k} - z_{\ast}\rangle  = & o \left( \frac{1}{k} \right),\\
r_{tan}(z_{k})= \dist (0, M(z_k) + F(z_k)) = o \left( \frac{1}{k} \right), \quad r_{fix}(z_{k})= \left\lVert z_k-J_{\gamma M}\big(z_k-\gamma F \left( z_{k} \right)\big)\right\rVert = & o \left( \dfrac{1}{k} \right).
\end{align*}
\end{thm}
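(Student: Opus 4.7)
The plan is to build on the summability results in Proposition~\ref{prop:split:lim}(i), the convergence of the Lyapunov function $\cG_{\lambda,s,k}$ from Proposition~\ref{prop:split:lim}(iii), and the convergence of $\|z_{k}-z_{\ast}\|$ established in Theorem~\ref{thm:conv}, in order to sharpen the $\bO(1/k)$ bounds of Proposition~\ref{prop:split:lim}(ii) into $o(1/k)$ rates. The first observation is immediate: since $\sum_{k\geq 1} k^{2}\|v_{k+1}-v_{k}\|^{2} < +\infty$, the terms of this convergent series tend to zero, so $k\|v_{k+1}-v_{k}\| \to 0$. Combined with \eqref{split:Lip}, this yields $\|\xi_{k+1}+F(z_{k+1}) - v_{k+1}\| \leq \|v_{k+1}-v_{k}\| = o(1/k)$.

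The main technical step is to upgrade $\|z_{k+1}-z_{k}\|$ and $\|v_{k+1}\|$ from $\bO(1/k)$ to $o(1/k)$. I would use an almost-monotonicity plus summability argument. Define $\phi_{k} := k^{2}\|z_{k} - z_{k-1}\|^{2}$ and $\psi_{k} := k^{2} \|v_{k}\|^{2}$, both of which are bounded by Proposition~\ref{prop:split:lim}(ii). By squaring the recursion \eqref{split:d-u}, applying Young's inequality, and exploiting the Lipschitz continuity of $F$ together with the rate $\|v_{k+1}-v_{k}\|=o(1/k)$ just established, the goal is to derive estimates of the form $\phi_{k+1} \leq \phi_{k} + s_{k}$ and $\psi_{k+1} \leq \psi_{k} + t_{k}$ with $\sum_{k} s_{k} < +\infty$ and $\sum_{k} t_{k} < +\infty$. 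Coupled with $\sum_{k} \phi_{k}/k = \sum_{k} k\|z_{k} - z_{k-1}\|^{2} < +\infty$ and $\sum_{k} \psi_{k}/k = \sum_{k} k\|v_{k}\|^{2} < +\infty$ from Proposition~\ref{prop:split:lim}(i), the standard lemma---a non-negative almost-nonincreasing sequence whose $(1/k)$-weighted sum is finite must converge to zero---then forces $\phi_{k}, \psi_{k} \to 0$, that is, $\|z_{k+1}-z_{k}\| = o(1/k)$ and $\|v_{k+1}\| = o(1/k)$.

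Once these are in hand, the remaining rates follow quickly. The triangle inequality gives $\|\xi_{k} + F(z_{k})\| \leq \|\xi_{k} + F(z_{k}) - v_{k}\| + \|v_{k}\| = o(1/k)$; Cauchy--Schwarz together with the boundedness of $(\|z_{k} - z_{\ast}\|)_{k\geq 0}$ yields $\langle \xi_{k}+F(z_{k}), z_{k} - z_{\ast}\rangle = o(1/k)$; and $r_{tan}(z_{k}) \leq \|\xi_{k}+F(z_{k})\| = o(1/k)$ since $\xi_{k} \in M(z_{k})$ by \eqref{algo:inc-M}. Finally, using that $\xi_{k} \in M(z_{k})$ is equivalent to $z_{k} = J_{\gamma M}(z_{k} + \gamma \xi_{k})$ together with the non-expansivity of $J_{\gamma M}$, we obtain $r_{fix}(z_{k}) = \|J_{\gamma M}(z_{k} + \gamma \xi_{k}) - J_{\gamma M}(z_{k} - \gamma F(z_{k}))\| \leq \gamma \|\xi_{k} + F(z_{k})\| = o(1/k)$.

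The main obstacle is the almost-monotonicity step: the cross terms that arise when squaring \eqref{split:d-u}, in particular those mixing $\|z_{k} - z_{k-1}\|$ and $\|v_{k+1}-v_{k}\|$, do not succumb to a naive application of Young's inequality---one such split produces terms of order $k^{3} \|v_{k+1}-v_{k}\|^{2}$, whose summability does not follow directly from Proposition~\ref{prop:split:lim}(i). A more careful bookkeeping---for instance, working with the auxiliary sequence $U_{k} := k(z_{k} - z_{k-1}) + \gamma(k+\alpha - c) v_{k}$, which by \eqref{split:d-u} satisfies the clean recursion $U_{k+1} = \frac{k+1}{k+\alpha} U_{k} + \gamma(\alpha-c) v_{k+1}$, or directly exploiting the Lyapunov structure of $\cG_{\lambda,s,k}$ rather than squaring individual norm identities---is likely required to close this step.
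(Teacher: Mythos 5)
Your endgame (once $k\norm{z_{k}-z_{k-1}}\to 0$ and $k\norm{v_{k}}\to 0$ are available, deducing the rates for $\xi_{k}+F(z_{k})$, the inner product, $r_{tan}$ and $r_{fix}$) is correct and matches the paper. However, the central step of your plan --- upgrading $\norm{z_{k+1}-z_{k}}$ and $\norm{v_{k+1}}$ from $\bO(1/k)$ to $o(1/k)$ via an almost-monotonicity estimate $\phi_{k+1}\leq\phi_{k}+s_{k}$, $\psi_{k+1}\leq\psi_{k}+t_{k}$ with summable perturbations --- is never established; you state it as a ``goal'' and then correctly diagnose that squaring \eqref{split:d-u} produces cross terms of order $k^{3}\norm{v_{k+1}-v_{k}}^{2}$, which Proposition~\ref{prop:split:lim}(i) (only $\sum_{k}k^{2}\norm{v_{k+1}-v_{k}}^{2}<+\infty$) cannot absorb. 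So the argument has a genuine gap exactly where the theorem's difficulty lies, and the alternatives you gesture at (the sequence $U_{k}$, or ``exploiting the Lyapunov structure'') are not carried out.

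The paper closes this gap by a different and more economical device: it sets $h_{s,k}:=\frac{k^{2}}{2}\bigl(\norm{2(z_{k}-z_{k-1})+s\gamma v_{k}}^{2}+(2-s)s\gamma^{2}\norm{v_{k}}^{2}\bigr)$ and, using the decomposition \eqref{defi:E-lambda} of the energy, writes $h_{s,k}$ \emph{exactly} (no inequalities, no recursion) as $\cE_{\lambda,s,k}-4\lambda p_{k}$ minus terms proportional to $\norm{v_{k}}^{2}$, $k\norm{v_{k}}^{2}$ and $\langle z_{k}-z_{\ast},v_{k}\rangle$. Since $\cE_{\lambda,s,k}$ converges by Proposition~\ref{prop:split:lim}(iii), $p_{k}$ converges by \eqref{split:lim-p-k} (from the proof of Theorem~\ref{thm:conv}), and the remaining terms vanish by \eqref{ineq-C4}, the limit $\lim_{k}h_{s,k}$ exists; then $\sum_{k}\frac{1}{k}h_{s,k}\leq 4\sum_{k}\norm{z_{k}-z_{k-1}}^{2}+\frac{1}{2}(2+s)s\gamma^{2}\sum_{k}k\norm{v_{k}}^{2}<+\infty$ forces that limit to be zero, which is precisely the $o(1/k)$ statement you were after. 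In short, instead of trying to control $k^{2}\norm{z_{k}-z_{k-1}}^{2}$ and $k^{2}\norm{v_{k}}^{2}$ separately through a perturbed recursion (where the problematic cross terms live), the paper tracks the single combined quantity already encoded in the Lyapunov energy, so no new recursive estimate is needed. To repair your proof you would either have to reproduce this identity-based argument or genuinely establish the summable-perturbation inequalities, which your current manipulation of \eqref{split:d-u} does not deliver.
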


\begin{proof}
Let $s \in \left(1+ \frac{\alpha}{4 c - \alpha}, 2 \right)$, and $0 \leq \underline{\lambda} \left( \alpha,c, s \right) < \overline{\lambda} \left( \alpha,c ,s \right) \leq \frac{s\alpha}{4}$ be the parameters provided by Proposition \ref{prop:split:lim}(iii) with the property that, for all $\underline{\lambda} \left( \alpha,c,s \right) < \lambda < \overline{\lambda} \left( \alpha,c,s \right)$ the sequence $\left( \cE_{\lambda,s,k} \right)_{k \geq 1}$, is convergent.

We choose $\lambda \in \left(\underline{\lambda} \left( \alpha,c,s \right), \overline{\lambda} \left( \alpha,c,s \right)\right)$ and set for all $k \geq 1$
\begin{equation*}
h_{s,k} := \dfrac{k^{2}}{2} \left( \left\lVert 2 \left( z_{k} - z_{k-1} \right) + s \gamma v_{k} \right\rVert ^{2} + (2-s)s \gamma^{2} \left\lVert v_{k} \right\rVert ^{2} \right).
\end{equation*}
We are going to show that
\begin{equation}
\label{thm:o-rates:lim-hk}
\lim\limits_{k \to + \infty} h_{s,k} = 0 .
\end{equation}
This assertion will immediately imply
\begin{equation*}
\lim\limits_{k \to +\infty} k \left\lVert 2 \left( z_{k} - z_{k-1} \right) + s \gamma v_{k} \right\rVert = \lim\limits_{k \to +\infty} k \left\lVert v_{k} \right\rVert = 0,
\end{equation*}
and further $\lim_{k \to + \infty} k\left\lVert z_{k} - z_{k-1} \right\rVert = 0$. The fact that
\begin{equation*}
\lim_{k \to + \infty} k \left\lVert \xi_{k} + F \left( z_{k} \right) \right\rVert = 0
\end{equation*}
will follow from \eqref{split:Lip}, \eqref{split:lim:dV-0} and \eqref{split:lim:V-dV}, since
\begin{equation*}
0 \leq \lim\limits_{k \to + \infty} k \left\lVert \xi_{k} + F \left( z_{k} \right) \right\rVert
\leq \lim\limits_{k \to + \infty} k \left\lVert v_{k} - v_{k-1} \right\rVert + \lim\limits_{k \to + \infty} k \left\lVert v_{k} \right\rVert = 0 .
\end{equation*}
The convergence statement above will imply that
\begin{equation*}
\lim_{k \to + \infty} k \ \dist \left(0, M(z_k)+ F \left( z_{k} \right) \right) = 0,
\end{equation*}
and, by using the boundedness of $(z_k)_{k \geq 0}$,
\begin{equation*}
\lim\limits_{k \to + \infty} k \left\langle \xi_{k} + F(z_{k}),z_{k} - z_{\ast} \right\rangle = 0.
\end{equation*}
The convergence rate
\begin{equation}
r_{fix}(z_{k})= \left\lVert z_k-J_{\gamma M}\big(z_k-\gamma F \left( z_{k} \right)\big)\right\rVert = o \left( \dfrac{1}{k } \right) \ \mbox{as} \ k \to +\infty\nonumber
\end{equation}
is a consequence of \eqref{tanfixres}.

What remains to be shown is that \eqref{thm:o-rates:lim-hk} does in fact hold. In view of \eqref{defi:E-lambda}, we have for all $k \geq 1$
\begin{equation*}
h_{s,k} = \cE_{\lambda,s,k} - 4 \lambda p_{k} -  \gamma^2\left(\big( \left( 2 - s\right)c + \left( \alpha - c \right)s \big)k + 2\left( \alpha - c \right)c\right) \left\lVert v_{k} \right\rVert ^{2} -4\left(\alpha - c \right)\lambda \gamma \langle z_k - z_{*},v_k\rangle.
\end{equation*}
From Proposition~\ref{prop:split:lim}(iii) and \eqref{split:lim-p-k}, we have $\lim_{k \to + \infty} \cE_{\lambda,s,k} \in \R$ and $\lim_{k \to + \infty} p_{k} \in \R$, respectively.
From \eqref{ineq-C4} it yields
\begin{equation*}
\lim\limits_{k \to +\infty} k \left\lVert v_{k} \right\rVert ^{2} = 0\quad \textnormal{and} \quad \lim\limits_{k \to +\infty} \left\lVert v_{k} \right\rVert ^{2} = 0,
\end{equation*}
which implies the existence of
\begin{equation*}
\lim\limits_{k \to + \infty} h_{s,k} \in \R.
\end{equation*}
To get the precise value, we recall that the summability results in Proposition~\ref{prop:split:lim}(i) guarantee that
\begin{align*}
\sum_{k \geq 1} \dfrac{1}{k} h_{s,k} \leq 4 \sum_{k\geq 1} \left\lVert z_{k} - z_{k-1} \right\rVert ^{2} + \frac{1}{2}(2+s)s \gamma^{2} \sum_{k \geq 1} k \left\lVert v_{k} \right\rVert ^{2} < + \infty .
\end{align*}
We must have $\lim_{k \to + \infty} h_{k} = 0$, and the proof is finished.
\end{proof}

\section{A fast primal-dual full splitting algorithm}\label{sec3}

In this section we will apply the Fast RFB algorithm to the solution of the saddle point problem and further to a convex optimization problem with linear cone constraints. For the resulting primal-dual full splitting methods, we will formulate the convergence and convergence rates statements that follow from those proved in the general setting.

\subsection{Convex-concave saddle point problems with smooth coupling term}\label{subsec31}

First, we consider the application the proposed fast algorithm to the saddle point problem \eqref{minmax}
\begin{equation}\label{Lag:saddle}
\min_{x\in \cX}\max_{\lambda \in \cY} \Psi \left( x , \lambda \right) \coloneqq f(x) + \Phi\left( x , \lambda \right) - g(\lambda),
\end{equation}
where $\cX$ and $ \cY$ are two real Hilbert spaces, $f \colon \cX \to \R \cup \left\lbrace + \infty \right\rbrace$ and $g \colon \cY \to \R \cup \left\lbrace + \infty \right\rbrace$ are proper, convex, and lower semicontinuous functions, and
$\Phi \colon \cX \times \cY \rightarrow \R$ is a differentiable function with Lipschitz continuous gradient, convex in $x$ and concave in $\lambda$.

Finding a solution to \eqref{minmax} reduces to the solving of the monotone inclusion \eqref{intro:pb:eq}, for $\cH:= \cX \times \cY$, $M : \cH \to 2^\cH, M(x,\lambda) = (\partial f(x), \partial g(\lambda))$, and $F : \cH \to \cH, F(x,\lambda) = (\nabla_x \Phi(x,\lambda), -\nabla_\lambda \Phi (x,\lambda))$. Let $L >0$ be the Lipschitz constant of $F$. Applying the Fast RFB algorithm to this particular setting leads to the following full splitting algorithms, which have the feature that each function is evaluated separately: the non-smooth tones $f$ and $g$ via their proximal operators, and the smooth one via a gradient step.

\begin{mdframed}
\begin{algo}
\label{algo:fpdVu}
Let
$$\alpha>2, \quad\frac{\alpha}{2}< c <\alpha - 1 ,
 \quad \textnormal{and}\quad 0<\gamma <\frac{1}{2L}.$$
For initial points $x_{0}, w_{1,0}, y_{1,0}\in \cX $, $\lambda_{0}, w_{2,0}, y_{2,0} \in \cY$, $x_{1}= \prox_{\gamma f}\left(y_{1,0}- \gamma\nabla_{x}\Phi \left( w_{1,0},w_{2,0}\right) \right)$ and $\lambda_{1} = \prox_{\gamma g}\left(y_{2,0} + \gamma \nabla_{\lambda} \Phi\left( w_{1,0},w_{2,0}\right) \right)$, we set
\begin{equation}
(\forall k \geq 1) \quad \begin{dcases}
& y_{1,k}	= x_{k}+\left(1-\frac{\alpha}{k+\alpha}\right)\left( x_{k}-x_{k-1} \right)+\left(1-\frac{c}{k+\alpha}\right) \left( y_{1,k-1} - x_{k} \right),\\
& y_{2,k} = \lambda_{k}+\left(1-\frac{\alpha}{k+\alpha}\right)\left( \lambda_{k}-\lambda_{k-1} \right)+\left(1-\frac{c}{k+\alpha}\right) \left( y_{2,k-1} - \lambda_{k} \right),\\
& w_{1,k}  =y_{1,k} - \left( y_{1,k-1} -x_k \right),\\
& w_{2,k}  =y_{2,k} - \left( y_{2,k-1} -\lambda_k \right),\\
& x_{k+1} = \prox_{\gamma f}\left(y_{1,k} - \gamma\nabla_{x}\Phi \left( w_{1,k},w_{2,k}\right) \right),\\
& \lambda_{k+1}	= \prox_{\gamma g}\left(y_{2,k} + \gamma \nabla_{\lambda} \Phi\left( w_{1,k},w_{2,k}\right) \right).\nonumber
\end{dcases}
\end{equation}
\end{algo}
\end{mdframed}
The following result is based on the theorems \ref{thm:conv} and \ref{thm:o-rates}.

\begin{thm}\label{thm:comp:conv}
Let $\left( x_{*} , \lambda_{*} \right) \in \cX \times \cY$ be a saddle point of \eqref{minmax}, and $\left(x_{k},\lambda_{k} \right)_{k \geq 0}$ the sequence generated by Algorithm \ref{algo:fpdVu}. The following statements are true:
\begin{itemize}
\item[(i)]	the sequence $\left(x_{k} , \lambda_{k} \right)_{k \geq 0}$ converges weakly to a saddle point of \eqref{minmax};

\item[(ii)] the following holds as $k \to +\infty$
\begin{align*}
\left\lVert x_{k} - x_{k-1} \right\rVert = o \left( \dfrac{1}{k} \right)	
\,\, \textrm{ and }\,\,
\left\lVert \lambda_{k} - \lambda_{k-1} \right\rVert & = o \left( \dfrac{1}{k} \right) \nonumber \\
\left\lVert u_k + \nabla_{x} \Phi(x_{k} , \lambda_{k}) \right\rVert  = o \left( \dfrac{1}{k} \right)
 \,\, \textrm{ and } \,\,
\left\lVert v_{k}- \nabla_{\lambda} \Phi(x_{k} , \lambda_{k}) \right\rVert & = o \left( \dfrac{1}{k} \right) \nonumber \\
\left\langle x_{k} - x_{*} , u_k + \nabla_{x} \Phi(x_{k} , \lambda_{k})\right\rangle + \left\langle \lambda_{k} - \lambda_{*} , v_{k} - \nabla_{\lambda} \Phi(x_{k} , \lambda_{k})\right\rangle & = o \left( \dfrac{1}{k} \right)\nonumber \\
\Psi \left( x_{k} , \lambda_{*} \right)-\Psi \left( x_{*} , \lambda_{k} \right) & = o \left( \dfrac{1}{k } \right),
\end{align*}
where, for all $k \geq 0$
\begin{align}
u_{k+1} :&= \frac{1}{\gamma}y_{1,k}-\nabla_{x} \Phi\left( w_{1,k},w_{2,k}\right) - \frac{1}{\gamma}x_{k+1} \in \partial f(x_{k+1}) \nonumber\\
v_{k+1}:&= \frac{1}{\gamma}y_{2,k}  + \nabla_{\lambda} \Phi\left( w_{1,k},w_{2,k}\right)-\frac{1}{\gamma}\lambda_{k+1} \in \partial g \left( \lambda_{k+1} \right).\nonumber
\end{align}
\end{itemize}
\end{thm}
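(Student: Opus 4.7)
The plan is to derive the theorem as a direct specialization of the general results obtained for the Fast RFB algorithm in Theorems~\ref{thm:conv} and~\ref{thm:o-rates}. Set $\cH := \cX \times \cY$ with the canonical inner product, and take the operators $M$ and $F$ defined by~\eqref{op:A} and~\eqref{op:F}. Then $M$ is maximally monotone (since $\partial f, \partial g$ are), $F$ is monotone and $L$-Lipschitz by hypothesis, and $(x_*, \lambda_*)$ is a saddle point of $\Psi$ if and only if $z_* := (x_*, \lambda_*) \in \zer(M+F)$, so in particular $\zer(M+F) \neq \emptyset$. Writing $z_k := (x_k, \lambda_k)$, $y_k := (y_{1,k}, y_{2,k})$, and $w_k := (w_{1,k}, w_{2,k})$, the product structure of $J_{\gamma M} = (\prox_{\gamma f}, \prox_{\gamma g})$ together with the coordinate-wise form of $F$ show that the iteration of Algorithm~\ref{algo:fpdVu} coincides with Algorithm~\ref{algo:im} applied to this inclusion.

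Statement (i) then follows at once from Theorem~\ref{thm:conv}. For (ii), the Hilbert norm decomposes as $\|z\|^2 = \|x\|^2 + \|\lambda\|^2$, so the velocity rate $\|z_k - z_{k-1}\| = o(1/k)$ in Theorem~\ref{thm:o-rates} yields simultaneously $\|x_k - x_{k-1}\| = o(1/k)$ and $\|\lambda_k - \lambda_{k-1}\| = o(1/k)$. From the proximal identities in Algorithm~\ref{algo:fpdVu} we have $u_k \in \partial f(x_k)$ and $v_k \in \partial g(\lambda_k)$, hence $\xi_k := (u_k, v_k) \in M(z_k)$ matches the element produced by Proposition~\ref{rmk:sub}. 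Consequently,
\begin{equation*}
\|\xi_k + F(z_k)\|^2 = \|u_k + \nabla_x \Phi(x_k, \lambda_k)\|^2 + \|v_k - \nabla_\lambda \Phi(x_k, \lambda_k)\|^2,
\end{equation*}
and the two residual rates, together with the decomposition of the inner product $\langle \xi_k + F(z_k), z_k - z_* \rangle$ into the sum of the two coordinate-wise inner products, are immediate coordinate-wise restatements of Theorem~\ref{thm:o-rates}.

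The remaining estimate, for the primal-dual gap, is the only step requiring a specific saddle-point argument. The plan is to split
\begin{equation*}
\Psi(x_k, \lambda_*) - \Psi(x_*, \lambda_k) = \big(f(x_k) - f(x_*)\big) + \big(g(\lambda_k) - g(\lambda_*)\big) + \big(\Phi(x_k, \lambda_*) - \Phi(x_*, \lambda_k)\big),
\end{equation*}
apply the subgradient inequality for $f$ at $x_k$ with $u_k \in \partial f(x_k)$ and for $g$ at $\lambda_k$ with $v_k \in \partial g(\lambda_k)$, and use the convex-concave structure of $\Phi$ (convexity in the first argument, concavity in the second) to bound the coupling difference by $\langle \nabla_x \Phi(x_k, \lambda_k), x_k - x_* \rangle + \langle \nabla_\lambda \Phi(x_k, \lambda_k), \lambda_* - \lambda_k \rangle$. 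After regrouping, the right-hand side equals exactly the inner product already controlled in the previous step, and is therefore $o(1/k)$. Nonnegativity of $\Psi(x_k, \lambda_*) - \Psi(x_*, \lambda_k)$, which follows from $(x_*, \lambda_*)$ being a saddle point, then promotes this one-sided bound to the required $o(1/k)$ convergence.

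The main obstacle, if any, is simply careful bookkeeping: verifying that the two-block scheme of Algorithm~\ref{algo:fpdVu} exactly reproduces Algorithm~\ref{algo:im} on $\cH := \cX \times \cY$, and correctly identifying the elements $u_k, v_k$ with the $\xi_k$ of Proposition~\ref{rmk:sub} via the optimality conditions of the proximal updates. Once this identification is in place, items (i) and (ii) reduce to coordinate-wise restatements of Theorems~\ref{thm:conv} and~\ref{thm:o-rates}, supplemented by the short convex-concave subgradient calculation sketched above for the primal-dual gap.
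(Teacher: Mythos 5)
Your proposal is correct and follows essentially the same route as the paper: it specializes Theorems~\ref{thm:conv} and~\ref{thm:o-rates} to $\cH=\cX\times\cY$ with $z_k=(x_k,\lambda_k)$ and $\xi_k=(u_k,v_k)$ as in Proposition~\ref{rmk:sub}, reads off the coordinate-wise rates, and bounds the gap $\Psi(x_k,\lambda_*)-\Psi(x_*,\lambda_k)$ between $0$ and the controlled inner product via the subgradient inequalities for $f,g$ and the convexity--concavity of $\Phi$. This is precisely the paper's argument, so no further comparison is needed.
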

\begin{proof}
Algorithm \ref{algo:fpdVu} is a special instance of Algorithm \ref{algo:im} when applied to the monotone inclusion \eqref{minimax:optcond}, for $z_k:=(x_{k},\lambda_{k})$, $y_k:=(y_{1,k},y_{2,k})$ and $w_k:=(w_{1,k},w_{2,k})$ for all $k \geq 0$. The third block Algorithm \ref{algo:im} is obviously equivalent for all $k \geq 1$ to
\begin{equation*}
\left\{
\begin{aligned}
&x_{k+1}=\prox_{\gamma f}\left(y_{1,k}  - \gamma \nabla_{x} \Phi\left( w_{1,k},w_{2,k}\right) \right),\\
&\lambda_{k+1}	 = \prox_{\gamma g}\left(y_{2,k}   + \gamma \nabla_{\lambda} \Phi\left( w_{1,k},w_{2,k}\right) \right).
\end{aligned}\right.
\end{equation*}
The sequence $(\xi_{k})_{k \geq 1}$ introduced in Proposition \ref{rmk:sub}, and defined for all $k \geq 1$ as
\begin{align*}
\xi_{k}=\frac{1}{\gamma}\left(y_{k-1} - z_{k}\right) - F(w_{k-1})
\end{align*}
plays a crucial role in the formulation of the convergence rates. In the context Algorithm \ref{algo:fpdVu}, we have for $\xi_k:= (u_k,v_{k})$ and all $k \geq 1$
\begin{equation}\label{eq:uv}
\left\{
\begin{aligned}
u_{k}&=\frac{1}{\gamma}y_{1,k-1} - \nabla_{x} \Phi\left( w_{1,k-1},w_{2,k-1}\right)-\frac{1}{\gamma}x_{k},\\
v_{k}&= \frac{1}{\gamma}y_{2,k-1} + \nabla_{\lambda} \Phi\left( w_{1,k-1},w_{2,k-1}\right)-\frac{1}{\gamma}\lambda_{k}.
\end{aligned}\right.
\end{equation}
Furthermore, $\xi_{k} \in M(z_k)$ becomes for all $k \geq 1$
\begin{equation*}
\left\{
\begin{aligned}
u_{k}&\in \partial f(x_{k}),\\
v_{k}&\in \partial g(\lambda_k).
\end{aligned}\right.
\end{equation*}

The weak convergence of the sequence $\left(x_{k} , \lambda_{k} \right)_{k \geq 0}$ to a saddle point of \eqref{minmax} is a direct consequence of Theorem \ref{thm:conv}. In addition, Theorem \ref{thm:o-rates} yields
\begin{align*}
\left\lVert z_k - z_{k-1} \right\rVert = o \left( \dfrac{1}{k } \right), \quad \left\lVert \xi_k + F(z_k)\right\rVert = o \left( \dfrac{1}{k} \right), \quad \left\langle z_k - z_{*} , \xi_k + F(z_k) \right\rangle = o \left( \dfrac{1}{k } \right)  \quad \mbox{as} \quad k \rightarrow +\infty,
\end{align*}
where $z_*:=(x_*, \lambda_*)$. From the first statement, we obtain
\begin{align*}
\left\lVert x_k - x_{k-1} \right\rVert = o \left( \dfrac{1}{k } \right) \quad \mbox{and} \quad \left\lVert \lambda_k-\lambda_{k-1} \right\rVert = o \left( \dfrac{1}{k } \right) \ \mbox{as} \ k \rightarrow +\infty.
\end{align*}
For $z_k=(x_{k},\lambda_{k})$ and $\xi_k= (u_k,v_k)$, given by \eqref{eq:uv}, the other two statements become
\begin{align*}
\left\lVert \begin{pmatrix} u_k + \nabla_{x} \Phi(x_{k} , \lambda_{k}), v_{k} - \nabla_{\lambda} \Phi(x_{k} , \lambda_{k}) \end{pmatrix} \right\rVert = o \left( \dfrac{1}{k} \right) \ \mbox{as} \ k \rightarrow +\infty,
\end{align*}
and
\begin{align*}
\left\langle \begin{pmatrix} x_{k} -  x_{*} , \lambda_{k} - \lambda_{*} \end{pmatrix} , \begin{pmatrix} u_k  + \nabla_{x} \Phi(x_{k} , \lambda_{k}), v_{k} - \nabla_{\lambda} \Phi(x_{k} , \lambda_{k}) \end{pmatrix} \right\rangle = o \left( \dfrac{1}{k } \right) \ \mbox{as} \ k \rightarrow +\infty,
\end{align*}
respectively. Obviously,
\begin{align*}
\left\lVert u_k  + \nabla_{x} \Phi(x_{k} , \lambda_{k}) \right\rVert = o \left( \dfrac{1}{k } \right)
\quad\mbox{and} \quad \left\lVert v_{k} - \nabla_{\lambda} \Phi(x_{k} , \lambda_{k}) \right\rVert  = o \left( \dfrac{1}{k} \right) \ \mbox{as} \ k \rightarrow +\infty.
\end{align*}
Using the convexity of $f$ and $g$, and the convexity and concavity of $\Phi$ in its first and second variables respectively, it follows for all $k \geq 1$ that
\begin{align*}
& \left\langle \begin{pmatrix} x_{k} -  x_{*} , \lambda_{k} - \lambda_{*} \end{pmatrix} , \begin{pmatrix} u_k  + \nabla_{x} \Phi(x_{k} , \lambda_{k}), v_{k} - \nabla_{\lambda} \Phi(x_{k} , \lambda_{k}) \end{pmatrix} \right\rangle\\
 = \ & \langle u_k + \nabla_{x} \Phi(x_{k} , \lambda_{k}),x_{k}-x_{*}\rangle+\langle v_{k}- \nabla_{\lambda} \Phi(x_{k} , \lambda_{k}) ,\lambda_{k}-\lambda_{*}\rangle \nonumber \\
= \ 	& \langle u_k , x_{k}-x_{*}\rangle + \langle\nabla_{x} \Phi(x_{k} , \lambda_{k}), x_{k}-x_{*} \rangle +\langle v_{k},\lambda_{k}-\lambda_{*}\rangle+\langle - \nabla_{\lambda} \Phi(x_{k} , \lambda_{k}), \lambda_{k}-\lambda_{*} \rangle \nonumber\\
\geq \ 	& f\left( x_{k} \right) - f(x_*) + \Phi\left( x_{k},\lambda_{*} \right) - \Phi\left( x_*,\lambda_{*} \right) + g(\lambda_k) - g \left( \lambda_{*} \right)  - \Phi\left( x_{*},\lambda_{k} \right) + \Phi\left( x_{*},\lambda_{*} \right) \nonumber\\
= \ 	& \Psi \left( x_{k} , \lambda_{*} \right)-\Psi \left( x_{*} , \lambda_{k} \right) \geq 0.
\end{align*}
This yields
\begin{align*}
&\Psi \left( x_{k} , \lambda_{*} \right)-\Psi \left( x_{*} , \lambda_{k} \right) = o \left( \dfrac{1}{k } \right) \quad \mbox{as} \quad k \rightarrow +\infty.
\end{align*}
\end{proof}

\subsection{Composite convex optimization problems}

The aim of this subsection is to show that the algorithm proposed in this paper leads to a fast primal-dual full splitting algorithm which solves the composite convex optimization problem
\begin{equation}\label{leeq1}
 \min_{x\in \cX} f(x)+g(Ax)+h(x),
\end{equation}
where $\mathcal{X}$ and $\mathcal{Y}$ are two real Hilbert spaces, $f, g \colon \cX \to \R \cup \left\lbrace + \infty \right\rbrace$ are proper, convex, and lower semicontinuous functions, $h: \cX\rightarrow \mathbb{R}$ is a convex and differentiable function such that $\nabla h$ is $L_{\nabla h}$-Lipschitz continuous, and $A: \cX\rightarrow \cY$ is a linear continuous operator.  The corresponding Fenchel dual problem is
\begin{equation}\label{Def:dual}
 \max_{\lambda\in \cY} -\big(f+h\big)^*\big(-A^*\lambda\big)-g^*(\lambda).
\end{equation}
If $(x_*, \lambda_*) \in \cX \times \cY$ is a primal-dual solution of \eqref{leeq1}-\eqref{Def:dual},  in other words, a solution of the KKT system
\begin{equation}
\label{condit}
\begin{cases}
0 \in  \partial f \left(x\right) +\nabla h \left(x\right) + A^*\lambda	\\
A x \in \partial g^*(\lambda),				
\end{cases}
\end{equation}
then $x_*\in\cX$ is an optimal solution of the primal problem,  $\lambda_*\in\cY$ is an optimal solution of the dual problem,  and strong duality holds.  Viceversa,  under suitable constraint qualification (see \cite{Bot2010,BauschkeCombettes2}),  if $x_* \in \cX$ is a solution of \eqref{leeq1}, then there exists an optimal solution $\lambda_* \in \cY$ of \eqref{Def:dual} such that $(x_*, \lambda_*)$ solves \eqref{condit}.  It is easy to see that the solutions of \eqref{condit} are nothing else than the saddle points of the Lagrangian
\begin{equation}\label{Lag:line}
{\cal L} \left( x,\lambda \right):= f \left( x \right)+h \left( x \right) + \left\langle \lambda , Ax  \right\rangle - g^*(\lambda),
\end{equation}
which provides a compelling motivation for treating this problem as a specific instance of the framework
developed in the previous subsection.  Thus, the algorithm and the convergence theorem of the previous subsection lead to the following statements, respectively.
\vskip 2mm
\begin{mdframed}
\begin{algo}
\label{algo:line}
Let
$$\alpha>2, \quad\frac{\alpha}{2}< c <\alpha - 1 ,
 \quad \textnormal{and}\quad 0<\gamma < \frac{1}{2\sqrt{\left(L_{\nabla h}+\left\lVert A  \right\rVert\right)^2 + \left\lVert A  \right\rVert^2}}.$$
For initial points $x_{0}, w_{1,0}, y_{1,0}\in \cX $, $\lambda_{0}, w_{2,0}, y_{2,0} \in \cY$, $x_{1}= \prox_{\gamma f}\left(y_{1,0}-\gamma \nabla h\left( w_{1,0} \right) - \gamma A^{*}w_{2,0}  \right)$ and $\lambda_{1}=\prox_{\gamma g^{*}} \left(y_{2,0} +\gamma A w_{1,0} \right)$,  we set
\begin{equation}
(\forall k \geq 1) \quad \begin{dcases}
& y_{1,k}	= x_{k}+\left(1-\frac{\alpha}{k+\alpha}\right)\left( x_{k}-x_{k-1} \right)+\left(1-\frac{c}{k+\alpha}\right) \left( y_{1,k-1} - x_{k} \right),\\
& y_{2,k}	= \lambda_{k}+\left(1-\frac{\alpha}{k+\alpha}\right)\left( \lambda_{k}-\lambda_{k-1} \right)+\left(1-\frac{c}{k+\alpha}\right) \left( y_{2,k-1} - \lambda_{k} \right),\\
& w_{1,k} =y_{1,k} - \left( y_{1,k-1} -x_k \right),\\
& w_{2,k} =y_{2,k} - \left( y_{2,k-1} -\lambda_k \right),\\
& x_{k+1}	= \prox_{\gamma f}\left(y_{1,k}-\gamma \nabla h\left( w_{1,k} \right) - \gamma A ^{*}w_{2,k} \right),\\
& \lambda_{k+1} = \prox_{\gamma g^{*}}\left(y_{2,k}+ \gamma  A w_{1,k}\right).\nonumber
\end{dcases}
\end{equation}
\end{algo}
\end{mdframed}
\begin{thm}\label{thm:line}
Let $\left( x_{*} , \lambda_{*} \right) \in \cX \times \cY$ be a primal-dual optimal solution of \eqref{leeq1}-\eqref{Def:dual},  and $\left(x_{k},\lambda_{k} \right)_{k \geq 0}$ the sequence generated by Algorithm \ref{algo:line}. The following statements are true:
\begin{itemize}
\item[(i)]	the sequence $\left(x_{k} , \lambda_{k} \right)_{k \geq 0}$ converges weakly to a primal-dual optimal solution of \eqref{leeq1}-\eqref{Def:dual};
\item[(ii)] the following holds as $k \to +\infty$
\begin{align*}
\left\lVert x_{k} - x_{k-1} \right\rVert = o \left( \dfrac{1}{k} \right)	
\quad \textrm{and} \quad
\left\lVert \lambda_{k} - \lambda_{k-1} \right\rVert & = o \left( \dfrac{1}{k} \right) \nonumber \\
\left\lVert u_k + \nabla h\left( x_{k} \right) + A^{*}\lambda_{k}\right\rVert = o \left( \dfrac{1}{k} \right)
\quad \textrm{and} \quad
\left\lVert v_{k} -A x_{k} \right\rVert & = o \left( \dfrac{1}{k} \right) \nonumber \\
{\cal L} \left( x_{k} , \lambda_{*} \right)- {\cal L}\left( x_{*} , \lambda_{k} \right) & = o \left( \dfrac{1}{k } \right),
\end{align*}
where, for every $k \geq 0$
\begin{align*}
u_{k+1} &:= \frac{1}{\gamma}y_{1,k}- \nabla h\left( w_{1,k} \right)- A^{*}w_{2,k} - \frac{1}{\gamma}x_{k+1} \in \partial f(x_{k+1}),\\
v_{k+1}&:= \frac{1}{\gamma}y_{2,k}  + A w_{1,k}-\frac{1}{\gamma}\lambda_{k+1}\in \partial g^{*}(\lambda_{k+1}).
\end{align*}
\end{itemize}
\end{thm}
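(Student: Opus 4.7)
The plan is to identify Algorithm~\ref{algo:line} as a direct instance of Algorithm~\ref{algo:fpdVu} applied to the Lagrangian saddle point formulation, and then deduce Theorem~\ref{thm:line} from Theorem~\ref{thm:comp:conv} by substitution. Concretely, I would cast \eqref{leeq1}-\eqref{Def:dual} into the framework \eqref{minmax} by keeping $f$ unchanged, putting $g^{*}$ in place of the dual regularizer ``$g$'' of \eqref{minmax}, and taking the smooth coupling $\Phi(x,\lambda) := h(x) + \langle \lambda, Ax\rangle$. Then $\Psi$ coincides with the Lagrangian ${\cal L}$ of \eqref{Lag:line}: convexity of $h$ makes $\Phi$ convex in $x$, and its affinity in $\lambda$ makes it concave in $\lambda$, placing us exactly in the setting of Section~\ref{subsec31}. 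Using $\nabla_{x}\Phi(x,\lambda) = \nabla h(x) + A^{*}\lambda$ and $\nabla_{\lambda}\Phi(x,\lambda) = Ax$, the six update blocks of Algorithm~\ref{algo:fpdVu} reduce line by line to those of Algorithm~\ref{algo:line}. Moreover, the saddle points of ${\cal L}$ are precisely the primal-dual optimal solutions of \eqref{leeq1}-\eqref{Def:dual} via the KKT system \eqref{condit}.

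Next, I would verify that the stated step size range is admissible for Theorem~\ref{thm:comp:conv}. Here $F(x,\lambda) := (\nabla h(x) + A^{*}\lambda,\, -Ax)$, and a direct expansion gives
\begin{equation*}
\normsq{F(x_{1},\lambda_{1}) - F(x_{2},\lambda_{2})} \leq \bigl( L_{\nabla h}\norm{x_{1}-x_{2}} + \norm{A}\norm{\lambda_{1}-\lambda_{2}} \bigr)^{2} + \norm{A}^{2}\normsq{x_{1}-x_{2}}.
\end{equation*}
The elementary estimate $(ra+sb)^{2} \leq (r^{2}+s^{2})(a^{2}+b^{2})$ then yields $L := \sqrt{(L_{\nabla h}+\norm{A})^{2}+\norm{A}^{2}}$ as a Lipschitz constant of $F$ on $\cX\times\cY$, so the range prescribed in Algorithm~\ref{algo:line} is contained in $(0, 1/(2L))$, and the hypotheses of Theorem~\ref{thm:comp:conv} are fulfilled.

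Finally, I would translate each conclusion of Theorem~\ref{thm:comp:conv} through this substitution. Item~(i) immediately yields the weak convergence statement of Theorem~\ref{thm:line}(i). In item~(ii), the velocity rates carry over verbatim. The sequences $u_{k+1}\in\partial f(x_{k+1})$ and $v_{k+1}\in\partial g^{*}(\lambda_{k+1})$ defined in \eqref{eq:uv} specialize, under the present $\Phi$, to the explicit formulas stated in Theorem~\ref{thm:line}, so the tangent residual bound of Theorem~\ref{thm:comp:conv}(ii) becomes $\norm{u_{k}+\nabla h(x_{k})+A^{*}\lambda_{k}} = o(1/k)$ and $\norm{v_{k}-Ax_{k}} = o(1/k)$, and the primal-dual gap rate $\Psi(x_{k},\lambda_{*}) - \Psi(x_{*},\lambda_{k}) = o(1/k)$ is exactly ${\cal L}(x_{k},\lambda_{*}) - {\cal L}(x_{*},\lambda_{k}) = o(1/k)$. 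The main obstacle is purely bookkeeping: tracking the substitution $g\mapsto g^{*}$ in the dual block and confirming the Lipschitz constant; no new analytic ingredient is required beyond what Theorem~\ref{thm:comp:conv} already provides.
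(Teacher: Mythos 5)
Your proposal is correct and follows exactly the paper's route: the paper likewise treats \eqref{leeq1}--\eqref{Def:dual} via the Lagrangian \eqref{Lag:line} as an instance of \eqref{minmax} with $\Phi(x,\lambda)=h(x)+\langle \lambda, Ax\rangle$ and $g$ replaced by $g^{*}$, so that Algorithm~\ref{algo:line} is Algorithm~\ref{algo:fpdVu} in this setting and Theorem~\ref{thm:line} is read off from Theorem~\ref{thm:comp:conv}. Your verification that $\sqrt{(L_{\nabla h}+\norm{A})^{2}+\norm{A}^{2}}$ is an (admittedly conservative, but valid) Lipschitz constant of $F(x,\lambda)=(\nabla h(x)+A^{*}\lambda,\,-Ax)$ settles the step-size condition, which is the only point requiring a check beyond bookkeeping.
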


\begin{rmk}
To the best of our knowledge, these are the strongest convergence rate results among primal–dual full-splitting algorithms for composite convex optimization problems.  In the merely convex case, the algorithms proposed in the literature typically achieve an ergodic convergence rate of$\bO\left(\frac{1}{k}\right)$ as $k \to +\infty$ for the primal–dual gap (see, e.g., \cite{Chambolle2016Pock}). In contrast, Algorithm~\ref{algo:line} establishes two advances: (i) global convergence of the entire primal–dual sequence to a primal–dual optimal solution, and (ii) a nonergodic convergence rate of $o\left(\frac{1}{k}\right)$ as $k \to +\infty$ for the primal and dual discrete velocities,  the tangent residual and the primal-dual gap.
\end{rmk}

\subsection{Convex optimization problems with linear cone constraints}\label{subsec33}

In this subsection, we will study the optimization problem \eqref{convexcone1}
\begin{align*}
\min \ & f \left( x \right)+h \left( x \right),\\
\textrm{subject to} \	& Ax - b \in -\mathcal{K}
\end{align*}
where $\cX$ and $\cY$ are real Hilbert spaces, $\mathcal{K}$ is a nonempty, convex and closed cone in $\cY$, $f \colon \cX \rightarrow \R \cup \left\lbrace + \infty \right\rbrace$ is a proper, convex, and lower semicontinuous function, $h \colon \cX \rightarrow \R$ is a convex and differentiable function such that $\nabla h$ is $L_{\nabla h}$-Lipschitz continuous, and $A \colon \cX\rightarrow \cY$ is a linear continuous operator.

We aim to design efficient algorithms for detecting primal-dual optimal solutions  $(x_*, \lambda_*)$  of \eqref{convexcone1}, which correspond to solutions of the associated system of optimality conditions
\begin{equation}
\label{intro:cone-Lag}
\begin{cases}
0 \in  \partial f \left(x\right) +\nabla h \left(x\right) + A^*\lambda	\\
A x - b  \in N_{\mathcal{K}^{*}}(\lambda),				
\end{cases}
\end{equation}
where $\cK^{*}:=\{\lambda \in \cY : \langle \lambda, \zeta \rangle \geq 0 \ \forall \zeta \in \cK \}$ denotes the dual cone of $\cK$. Given a primal-dual optimal solution $(x_*, \lambda_*)$  of \eqref{convexcone1}, $x_*$ is an optimal solution of \eqref{convexcone1} and $\lambda_*$ is an optimal solution of the Lagrange dual problem of \eqref{convexcone1}.

Solving \eqref{intro:cone-Lag} is equivalent to finding the saddle points of the associated Lagrangian
\begin{equation*}
{\cal L} \left( x,\lambda \right):= f \left( x \right)+h \left( x \right) + \left\langle \lambda , Ax - b \right\rangle - \delta_{\cK^{*}}\left(\lambda\right),
\end{equation*}
which is a specific case of \eqref{Lag:line}. Therefore, it follows from Algorithm \ref{algo:line} and Theorem \ref{thm:line} that obtain
 the following statements.
\vskip 2mm
\begin{mdframed}
\begin{algo}
\label{algo:cone}
Let
$$\alpha>2, \quad\frac{\alpha}{2}< c <\alpha - 1 ,
 \quad \textnormal{and}\quad 0<\gamma < \frac{1}{2\sqrt{\left(L_{\nabla h}+\left\lVert A  \right\rVert\right)^2 + \left\lVert A  \right\rVert^2}}.$$
For initial points $x_{0}, w_{1,0}, y_{1,0}\in \cX $, $\lambda_{0}, w_{2,0}, y_{2,0} \in \cY$, $x_{1}= \prox_{\gamma f}\left(y_{1,0}-\gamma \nabla h\left( w_{1,0} \right) - \gamma A^{*}w_{2,0}  \right)$ and $\lambda_{1}=P_{\cK^{*}} \left(y_{2,0} +\gamma \left(A w_{1,0} -b\right)\right)$. We set
\begin{equation}
(\forall k \geq 1) \quad \begin{dcases}
& y_{1,k}	= x_{k}+\left(1-\frac{\alpha}{k+\alpha}\right)\left( x_{k}-x_{k-1} \right)+\left(1-\frac{c}{k+\alpha}\right) \left( y_{1,k-1} - x_{k} \right),\\
& y_{2,k}	= \lambda_{k}+\left(1-\frac{\alpha}{k+\alpha}\right)\left( \lambda_{k}-\lambda_{k-1} \right)+\left(1-\frac{c}{k+\alpha}\right) \left( y_{2,k-1} - \lambda_{k} \right),\\
& w_{1,k} =y_{1,k} - \left( y_{1,k-1} -x_k \right),\\
& w_{2,k} =y_{2,k} - \left( y_{2,k-1} -\lambda_k \right),\\
& x_{k+1}	= \prox_{\gamma f}\left(y_{1,k}-\gamma \nabla h\left( w_{1,k} \right) - \gamma A ^{*}w_{2,k} \right),\\
& \lambda_{k+1} = \textnormal{P}_{\mathcal{K}^{*}}\left(y_{2,k}+ \gamma \left( A w_{1,k} -b\right)\right).\nonumber
\end{dcases}
\end{equation}
\end{algo}
\end{mdframed}

\begin{thm}\label{thm:comp:cone}
Let $\left( x_{*} , \lambda_{*} \right) \in \cX \times \cY$ be a primal-dual optimal solution of \eqref{convexcone1}, and $\left(x_{k},\lambda_{k} \right)_{k \geq 0}$ the sequence generated by Algorithm \ref{algo:cone}. The following statements are true:
\begin{itemize}
\item[(i)]	the sequence $\left(x_{k} , \lambda_{k} \right)_{k \geq 0}$ converges weakly to a primal-dual optimal solution of \eqref{convexcone1};

\item[(ii)] the following holds as $k \to +\infty$
\begin{align*}
\left\lVert x_{k} - x_{k-1} \right\rVert = o \left( \dfrac{1}{k} \right)	
\quad \textrm{and} \quad
\left\lVert \lambda_{k} - \lambda_{k-1} \right\rVert & = o \left( \dfrac{1}{k} \right) \nonumber \\
\left\lVert u_k + \nabla h\left( x_{k} \right) + A^{*}\lambda_{k}\right\rVert = o \left( \dfrac{1}{k} \right)
\quad \textrm{and} \quad
\left\lVert v_{k} -A x_{k}+b \right\rVert & = o \left( \dfrac{1}{k} \right) \nonumber \\
{\cal L} \left( x_{k} , \lambda_{*} \right)- {\cal L}\left( x_{*} , \lambda_{k} \right) & = o \left( \dfrac{1}{k } \right),\nonumber \\
\left\lvert \left( f+h \right) \left( x_{k} \right) - \left( f+h \right) \left( x_{*} \right) \right\rvert = o \left( \dfrac{1}{k} \right) \quad \textrm{and} \quad \left| \langle \lambda_{k}, A x_{k} - b\rangle \right| & = o \left( \frac{1}{k} \right),
\end{align*}
where, for every $k \geq 0$
\begin{align*}
u_{k+1} &:= \frac{1}{\gamma}y_{1,k}- \nabla h\left( w_{1,k} \right)- A^{*}w_{2,k} - \frac{1}{\gamma}x_{k+1} \in \partial f(x_{k+1}),\\
v_{k+1}&:= \frac{1}{\gamma}y_{2,k}  + A w_{1,k}-\frac{1}{\gamma}\lambda_{k+1}-b\in N_{\mathcal{K}^{*}}(\lambda_{k+1}).
\end{align*}
\end{itemize}
\end{thm}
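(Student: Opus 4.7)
The approach is to realize Algorithm~\ref{algo:cone} as the specialization of Algorithm~\ref{algo:line} to the choice $g = \delta_{-\cK + b}$ in \eqref{leeq1}. Indeed, one then has $g^{*}(\lambda) = \delta_{\cK^{*}}(\lambda) + \langle \lambda, b\rangle$ and therefore $\prox_{\gamma g^{*}}(\mu) = P_{\cK^{*}}(\mu - \gamma b)$, which turns the last block of Algorithm~\ref{algo:line} into the projection step of Algorithm~\ref{algo:cone}. Under the same identification, the dual inclusion $\tilde v_{k+1} \in \partial g^{*}(\lambda_{k+1})$ from Theorem~\ref{thm:line} becomes $v_{k+1} := \tilde v_{k+1} - b \in N_{\cK^{*}}(\lambda_{k+1})$, and the residual bound $\|\tilde v_{k+1} - Ax_{k+1}\| = o(1/k)$ translates into $\|v_{k+1} - Ax_{k+1} + b\| = o(1/k)$. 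With this dictionary, statement (i) and the first three lines of statement (ii) (weak convergence, the two discrete velocity rates, the two tangent residual rates, and the Lagrangian gap rate) follow at once from Theorem~\ref{thm:line}. What remains is the $o(1/k)$ rate for the complementarity $|\langle \lambda_k, Ax_k - b\rangle|$ and the two-sided $o(1/k)$ rate for the objective value $|(f+h)(x_k) - (f+h)(x_{*})|$.

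I would prove the complementarity rate first, since the objective rate will rely on it. Because $\cK^{*}$ is a cone and $\lambda_k \in \cK^{*}$ for every $k \geq 1$ (by the projection step), testing the inequality defining $N_{\cK^{*}}(\lambda_k)$ at $\mu = 0$ and at $\mu = 2\lambda_k$ yields $\langle v_k, \lambda_k\rangle = 0$, while testing at an arbitrary $\mu \in \cK^{*}$ gives $v_k \in -\cK$. The decomposition
\begin{equation*}
\langle \lambda_k, Ax_k - b\rangle = \langle \lambda_k, v_k\rangle - \langle \lambda_k, v_k - Ax_k + b\rangle = -\langle \lambda_k, v_k - Ax_k + b\rangle,
\end{equation*}
together with the Cauchy--Schwarz inequality, the boundedness of $(\lambda_k)_{k \geq 0}$ (a byproduct of its weak convergence), and the residual rate $\|v_k - Ax_k + b\| = o(1/k)$, immediately yields $|\langle \lambda_k, Ax_k - b\rangle| = o(1/k)$.

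For the objective rate I would sandwich $(f+h)(x_k) - (f+h)(x_{*})$ from both sides. For the lower bound, the saddle-point inequality $\mathcal{L}(x_{*}, \lambda_{*}) \leq \mathcal{L}(x_k, \lambda_{*})$ combined with complementary slackness $\langle \lambda_{*}, Ax_{*} - b\rangle = 0$ gives $(f+h)(x_k) - (f+h)(x_{*}) \geq -\langle \lambda_{*}, Ax_k - b\rangle$, and splitting $Ax_k - b = v_k - (v_k - Ax_k + b)$ together with $\lambda_{*} \in \cK^{*}$, $v_k \in -\cK$ (so $\langle \lambda_{*}, v_k\rangle \leq 0$), and the residual rate controls this right-hand side as $-o(1/k)$. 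For the upper bound, convexity of $f + h$ gives $(f+h)(x_k) - (f+h)(x_{*}) \leq \langle u_k + \nabla h(x_k), x_k - x_{*}\rangle$, and the tangent residual bound $\|u_k + \nabla h(x_k) + A^{*} \lambda_k\| = o(1/k)$ lets me replace $u_k + \nabla h(x_k)$ by $-A^{*}\lambda_k$ modulo an $o(1/k)$ error; the resulting expression is $-\langle \lambda_k, Ax_k - b\rangle + \langle \lambda_k, Ax_{*} - b\rangle$ plus an $o(1/k)$ remainder (here using the boundedness of $(x_k)_{k \geq 0}$). The first term is $o(1/k)$ by the complementarity rate just established and the second is nonpositive because $\lambda_k \in \cK^{*}$ and $Ax_{*} - b \in -\cK$. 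The only genuinely nonroutine step in this whole argument is the cone-theoretic identification $v_k \in -\cK$ and $\langle v_k, \lambda_k\rangle = 0$, which is what converts the vector-valued residual bound into a scalar complementarity rate; everything else is bookkeeping and Cauchy--Schwarz.
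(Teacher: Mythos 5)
Your proposal is correct and follows essentially the same route as the paper: reduce to the composite/saddle-point theorem via the identification $g=\delta_{-\cK+b}$ (so $\prox_{\gamma g^{*}}$ becomes the shifted projection onto $\cK^{*}$ and the dual inclusion becomes $v_k\in N_{\cK^{*}}(\lambda_k)$), then use $\lambda_k\in\cK^{*}$, $v_k\in-\cK$, $\langle\lambda_k,v_k\rangle=0$ with Cauchy--Schwarz and boundedness to get the complementarity rate, and sandwich the objective values exactly as the paper does (your lower bound via the saddle-point inequality is equivalent to the paper's use of $-A^{*}\lambda_{*}\in\partial(f+h)(x_{*})$).
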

\begin{proof}
Given a primal-dual optimal solution $\left( x_{*} , \lambda_{*} \right) \in \cX \times \cY$ of \eqref{convexcone1}, it holds (see, for instance, \cite[Proposition 27.17]{BauschkeCombettes2}) $\lambda_{*}\in \mathcal{K}^{*}$, $Ax_{*} -b \in -\mathcal{K}$ and $\langle \lambda_{*}, Ax_{*} - b\rangle=0$. The weak convergence of the sequence of primal-dual iterates to a primal-dual solution of \eqref{convexcone1}, and the statements in the first three blocks of (ii) follow directly from Theorem \ref{thm:comp:conv}.

In addition, we have for all $k \geq 1$ that $v_k \in N_{\mathcal{K}^{*}}(\lambda_{k})$, which implies that $\lambda_{k}\in \mathcal{K}^{*}$, $v_{k} \in -\mathcal{K}$ and $\langle \lambda_{k}, v_{k}\rangle=0$. Therefore,
\begin{equation*}
\left| \langle \lambda_{k}, A x_{k} - b\rangle \right| =\left| \langle \lambda_{k}, A x_{k} - b - v_{k}\rangle \right|\leq \|\lambda_{k}\|\|A x_{k} - b - v_{k}\|.
\end{equation*}
Since $(\lambda_k)_{k \geq 0}$ is bounded and $\left\lVert v_{k} - A x_k +b \right\rVert = o \left( \frac{1}{k} \right)$, we conclude that $\left| \langle \lambda_{k}, A x_{k} - b\rangle \right| = o \left( \frac{1}{k} \right)$ as $k \to +\infty$.

Since $u_k \in \partial f(x_k)$, we have for all $k \geq 1$
\begin{align*}
(f+h)(x_k) - (f+h)(x_*) \leq & \ \langle u_k + \nabla h(x_k), x_k -x_* \rangle\\
= & \ \langle u_k + \nabla h(x_k) + A^*\lambda_k, x_k -x_* \rangle + \langle \lambda_k, b- Ax_k \rangle \\
\leq & \ \|u_k + \nabla h(x_k) + A^*\lambda_k\| \|x_k - x_*\| + |\langle \lambda_k, b- Ax_k \rangle|.
\end{align*}
On the other hand, since $-A^{*}\lambda_{*} \in \partial (f+h) \left( x_{*} \right)$, $\langle \lambda_{*}, Ax_{*} - b\rangle=0$ and $\langle \lambda_{*},v_k\rangle \leq 0$, we for all $k \geq 1$
\begin{align*}
(f+h) \left( x_{k} \right) &\geq (f+h) \left( x_{*} \right) - \langle A^{*}\lambda_{*} , x_{k} - x_{*} \rangle = (f+h)(x_{*}) - \langle \lambda_{*} , Ax_{k} - b \rangle \\
&= (f+h)(x_{*}) -\langle \lambda_{*} , - v_{k}+ A x_{k} - b \rangle - \langle \lambda_{*},v_k\rangle \\
&\geq (f+h)(x_{*}) -\| \lambda_{*} \|\| v_{k} -A x_{k} + b \|.
\end{align*}
Therefore, we obtain for every $k \geq 1$
\begin{align*}
& \ |(f+h) \left( x_{k} \right) - (f+h)(x_{*})| \\
\leq & \ \max \{\|u_k + \nabla h(x_k) + A^*\lambda_k\| \|x_k - x_*\| + |\langle \lambda_k, b- Ax_k \rangle|,\| \lambda_{*} \|\| v_{k} -A x_{k} + b \|\}.
\end{align*}
Since $(x_k)_{k \geq 0}$ is bounded, the right-hand side converges to zero with a convergence rate of $o \left( \frac{1}{k} \right)$ as $k \to +\infty$.
\end{proof}

\section{Numerical experiments}\label{sec4}

In this section, we present numerical experiments to illustrate the convergence rates established for the proposed fast method and compare our algorithm with those in the existing literature.

\subsection{The role of the algorithm parameters}
In this subsection, we investigated the influence of the parameters $\alpha$ and $c$ on the algorithm's convergence behavior. Consider the convex optimization problem
\begin{align}\label{num:cone}
\min & \left\lVert x \right\rVert_{1} + \frac{1}{2}\langle x,Hx\rangle - \langle x,h\rangle, \\
\textrm{such that} \,	& Ax - b \in -\R_{+}^{n},
\end{align}
where $\R_{+}^{n}$ denotes the nonnegative orthant of in $\R^{n}$,
\[A: =\frac{1}{4}
\begin{pmatrix}
 &  &  & -1  & 1 \\
 &  &  \reflectbox{$\ddots$} & \reflectbox{$\ddots$} & \\
 & -1 & 1 &  &  \\
 -1 & 1 &  &   &  \\
 1 &  &   &  & \\
\end{pmatrix}\in \R^{n\times n},\,\, H:=2A^{T}A, \,\, b:=\frac{1}{4}\begin{pmatrix} 1\\ 1\\ \vdots \\ 1\\ -4 \\ \end{pmatrix}\in \R^n \,\, {\rm and}\,\, h:=\frac{1}{4}\begin{pmatrix} 0\\ 0\\ \vdots \\ 0\\ 1 \\ \end{pmatrix}\in \R^n.
\]
The associated Lagrangian is
\begin{equation*}
\Lag \left( x , \lambda \right) = \left\lVert x \right\rVert_{1} + \frac{1}{2}\langle x,Hx\rangle - \langle x,h\rangle + \left\langle \lambda , Ax - b \right\rangle - \delta_{\R_{+}^{n}} \left( \lambda \right) .
\end{equation*}
The numerical experiments were conducted with  $n=1000$ and a maximum of $10^4$ iterations. Guided by theoretical insights, various choices for $\alpha$ and $c$ were explored, with the stepsize selected as
\begin{equation*}
\gamma = \frac{0.99}{2\sqrt{\left(\left\lVert H \right\rVert + \left\lVert A  \right\rVert\right)^2 + \left\lVert A  \right\rVert^2}}.
\end{equation*}

\begin{figure}[ht!]
	\centering
	\begin{subfigure}[b]{\textwidth}
		\centering
        \includegraphics[width=0.48\linewidth]{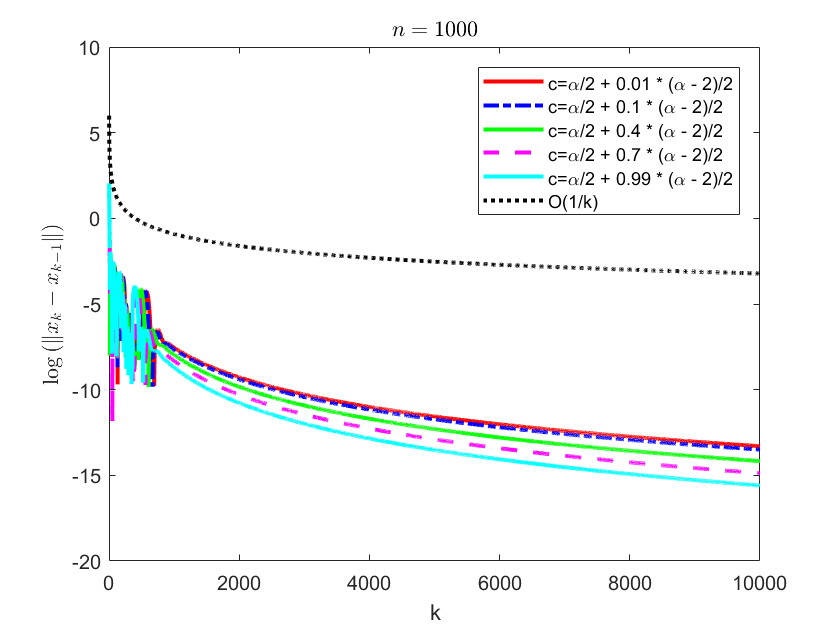}
        \hfill
		\includegraphics[width=0.48\linewidth]{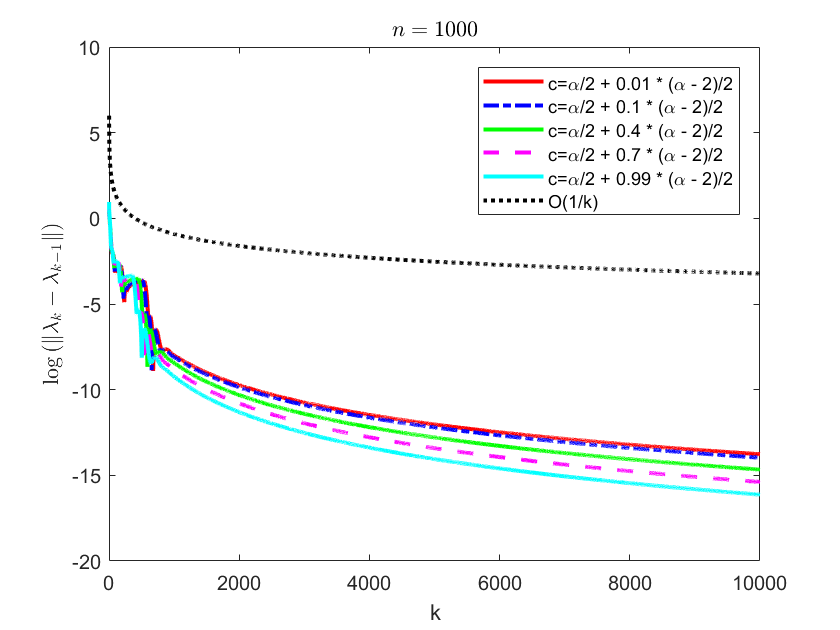}
  \end{subfigure}
	\vskip\baselineskip
	\begin{subfigure}[b]{\textwidth}
		\centering
        \includegraphics[width=0.5\linewidth]{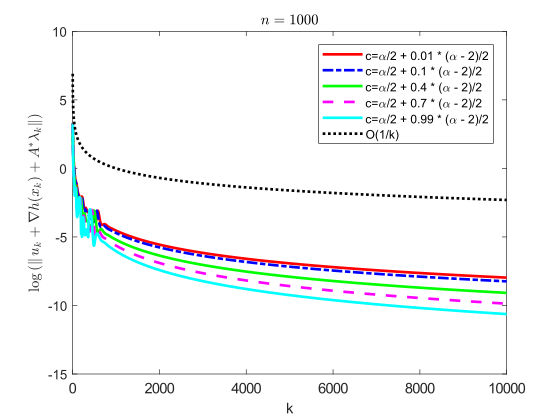}%
        \hfill
		\includegraphics[width=0.5\linewidth]{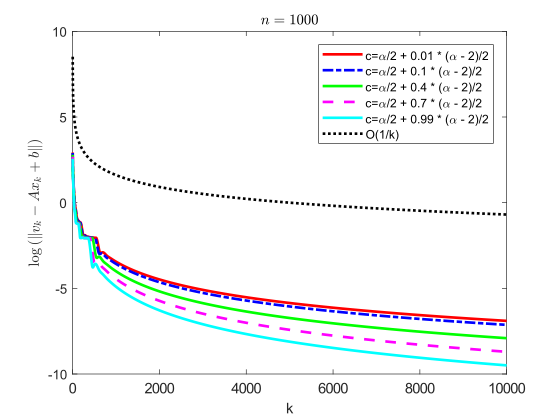}
  \end{subfigure}
	\vskip\baselineskip
	\begin{subfigure}[b]{\textwidth}
		\centering
        \includegraphics[width=0.5\linewidth]{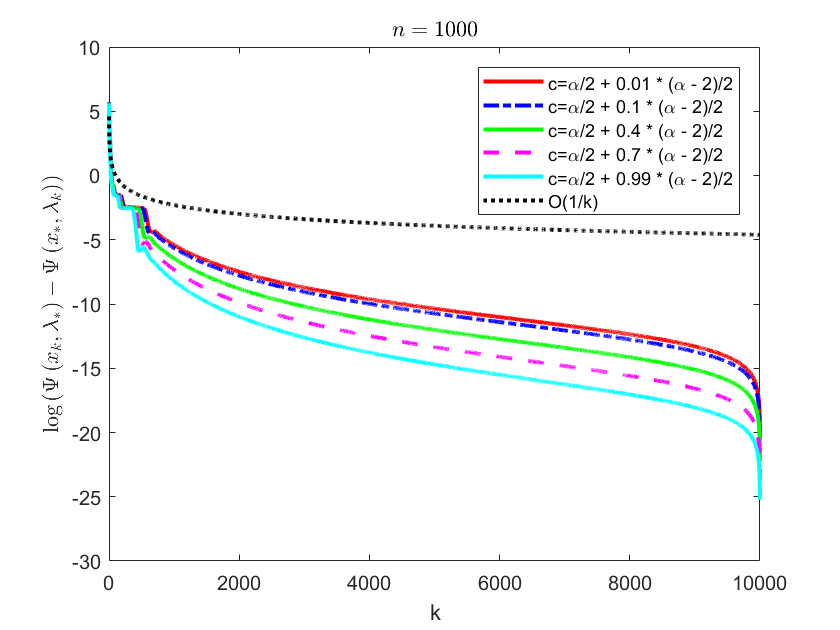}%
        \hfill
		\includegraphics[width=0.5\linewidth]{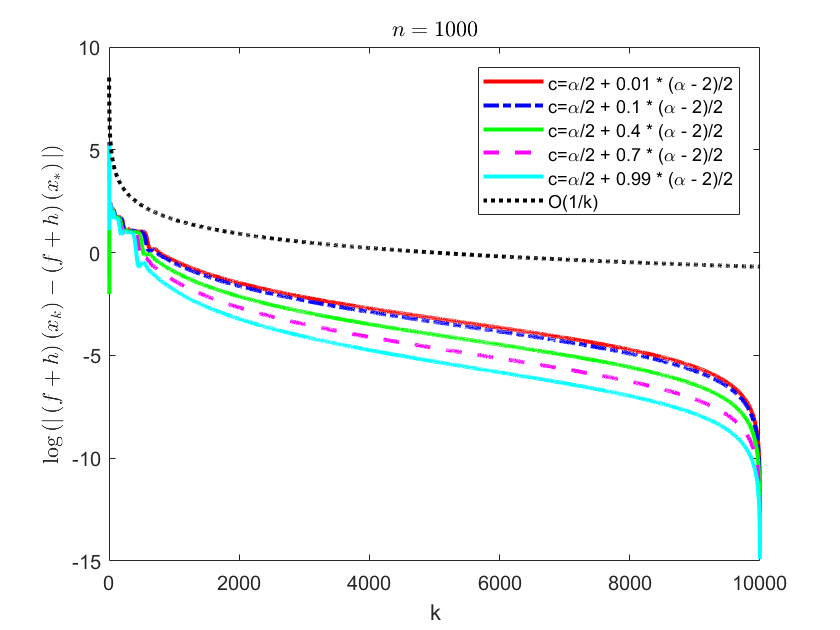}%
	\end{subfigure}
	\caption{The impact of the parameter $c$ on the convergence behavior of the discrete velocity, the tangent residual, the primal-dual gap, and the function values when $\alpha=3$.}\label{figure1}
\end{figure}

Figures \ref{figure1}-\ref{figure4} depict the convergence performance in terms of the discrete velocity, tangent residual, primal-dual gap, and function values, for $\alpha =3, 5, 10$ and $20$, and different values of $c \in \left(\frac{\alpha}{2}, \alpha -1 \right)$.  All the results are plotted on a semilog scale, namely, by taking the logarithm of the measured quantities.
The results demonstrate that increasing $c$ within the allowable range enhances the convergence behavior of the proposed algorithm. Furthermore, larger values of $\alpha$ significantly improve the algorithm's convergence, with the impact of $c$ lead to improved convergence of the algorithm and that for higher values of $\alpha$ the impact on $c$ on performance becoming more pronounced as $\alpha$ increases.  We also adopt a convergence rate of $\bO\left(\frac{1}{k}\right)$ as a predefined reference standard.  When compared against this baseline, our methods exhibit substantially superior performance.
\begin{figure}[ht!]
	\centering
	\begin{subfigure}[b]{\textwidth}
		\centering
        \includegraphics[width=0.48\linewidth]{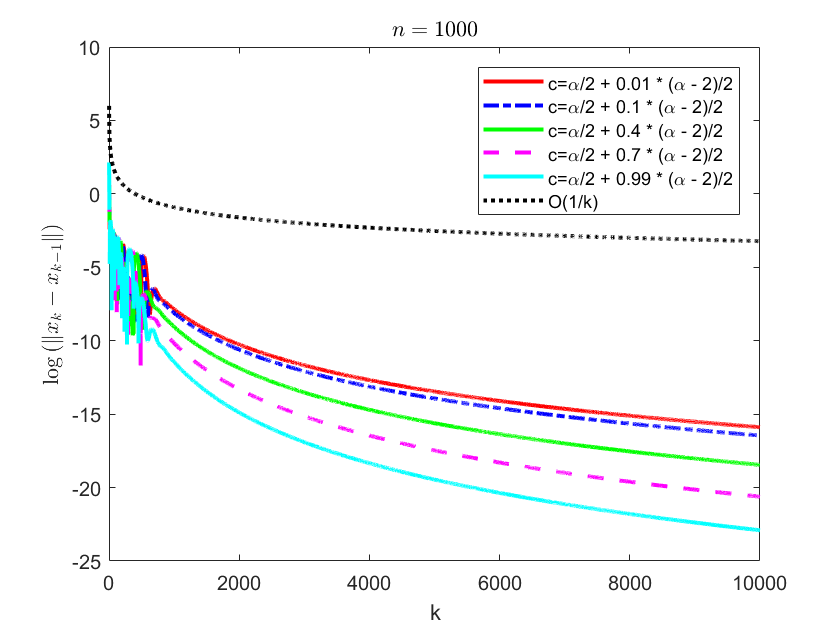}
        \hfill
		\includegraphics[width=0.48\linewidth]{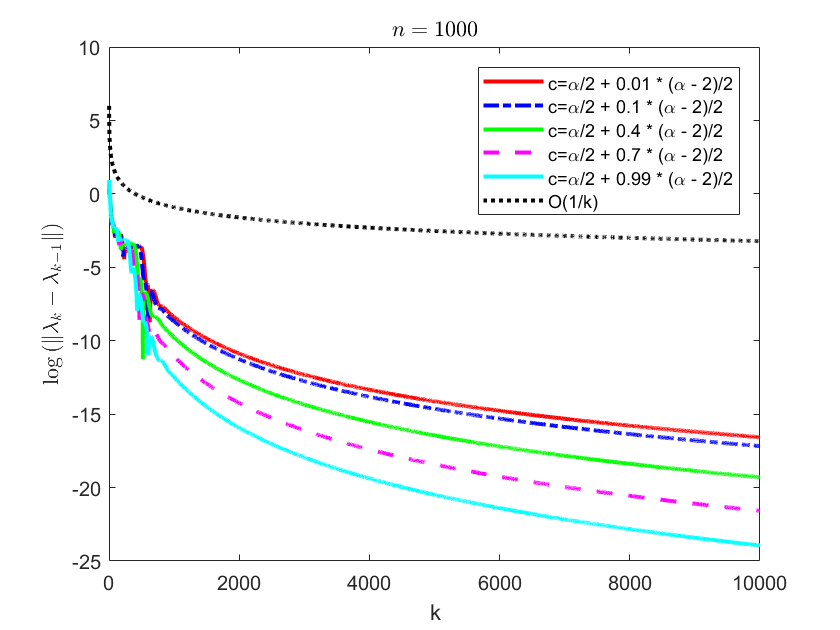}
    \end{subfigure}
	\vskip\baselineskip
	\begin{subfigure}[b]{\textwidth}
		\centering
        \includegraphics[width=0.48\linewidth]{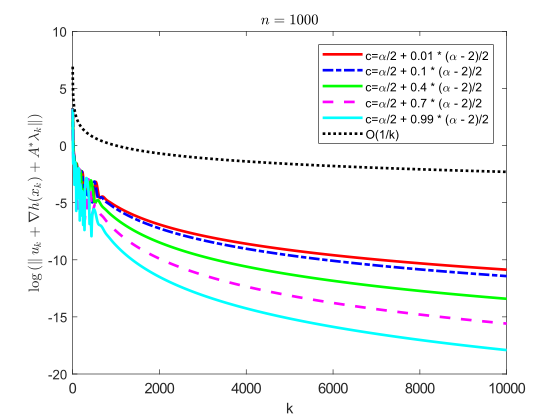}%
	    \hfill
		\includegraphics[width=0.48\linewidth]{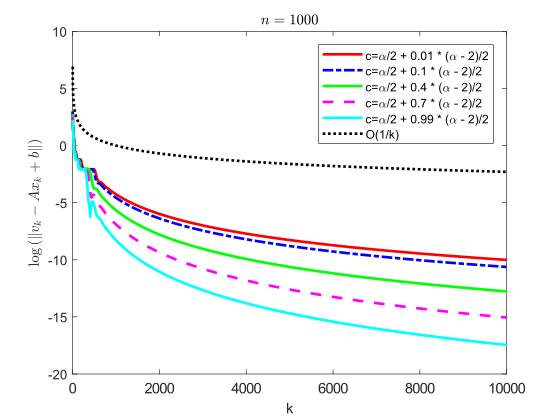}
    \end{subfigure}
	\vskip\baselineskip
	\begin{subfigure}[b]{\textwidth}
		\centering
        \includegraphics[width=0.48\linewidth]{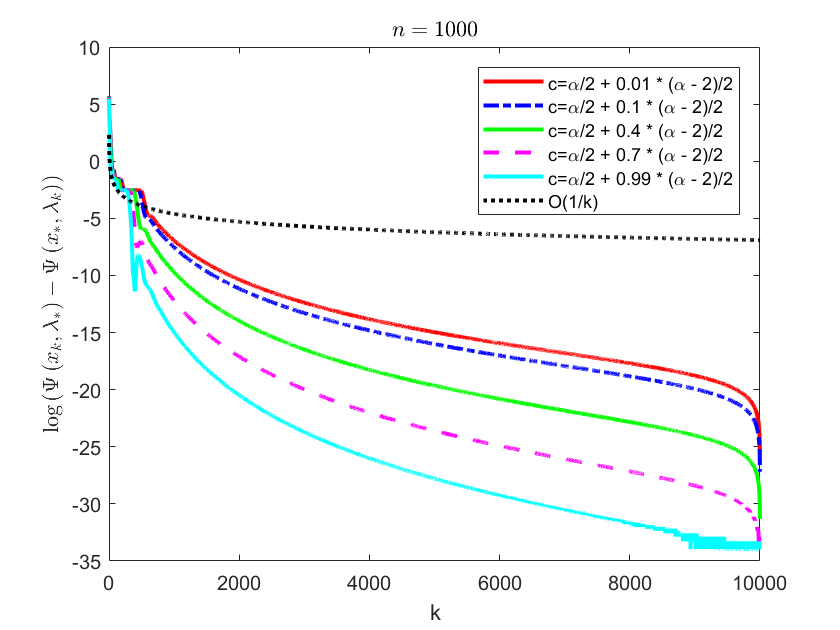}%
        \hfill
		\includegraphics[width=0.48\linewidth]{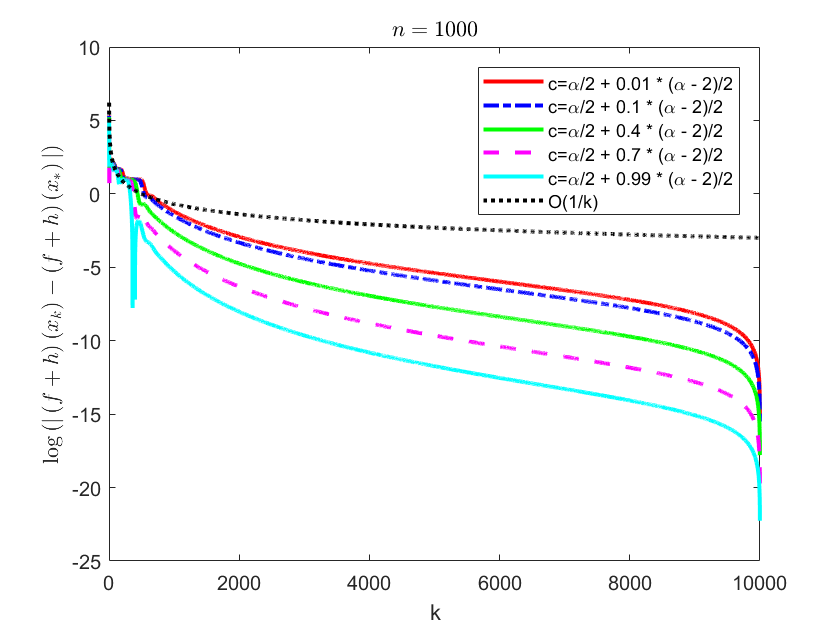}%
	\end{subfigure}
	\caption{The impact of the parameter $c$ on the convergence behavior of the discrete velocity, the tangent residual, the primal-dual gap, and the function values when $\alpha=5$.}
\end{figure}

\begin{figure}[ht!]
	\centering
	\begin{subfigure}[b]{\textwidth}
		\centering
        \includegraphics[width=0.48\linewidth]{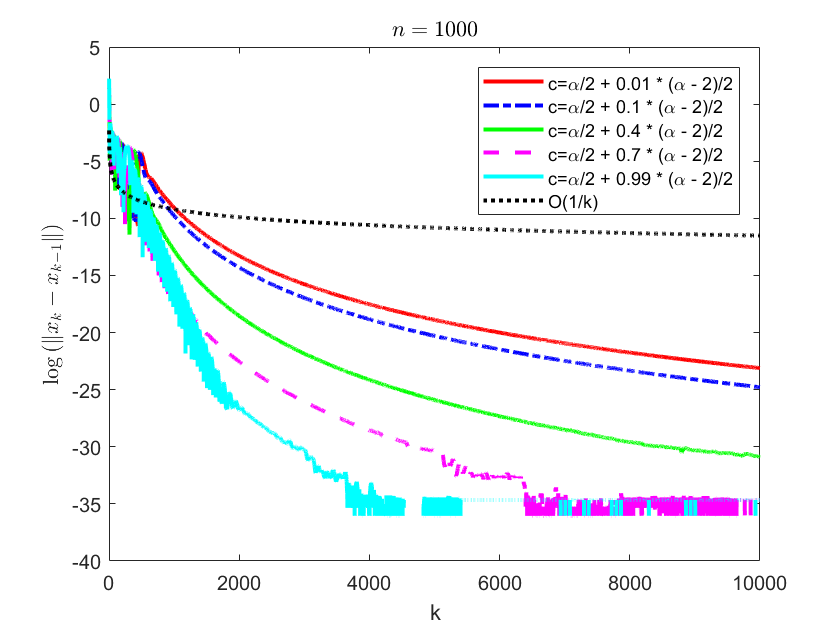}
        \hfill
		\includegraphics[width=0.48\linewidth]{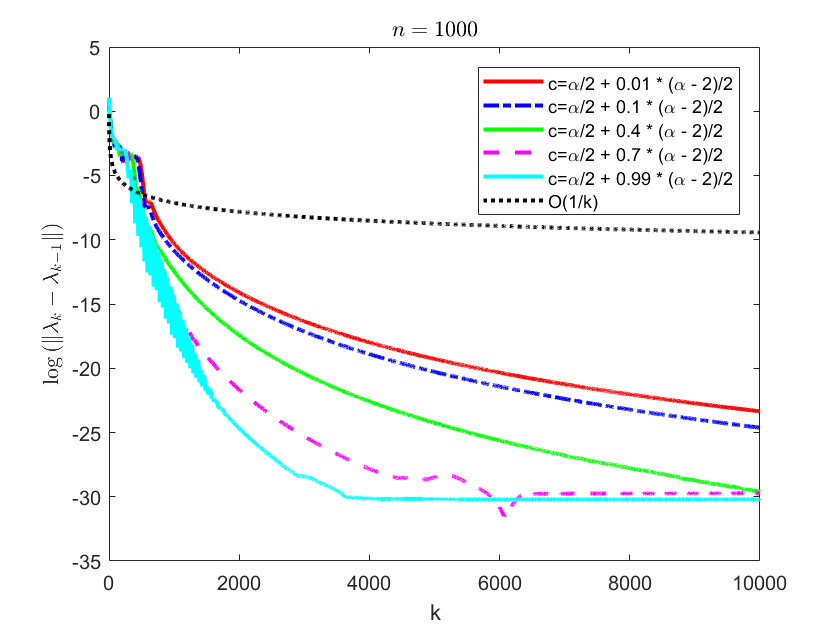}
  \end{subfigure}
	\vskip\baselineskip
	\begin{subfigure}[b]{\textwidth}
		\centering
        \includegraphics[width=0.48\linewidth]{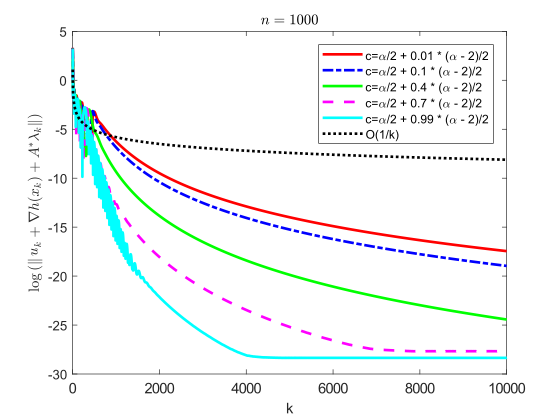}%
        \hfill
		\includegraphics[width=0.48\linewidth]{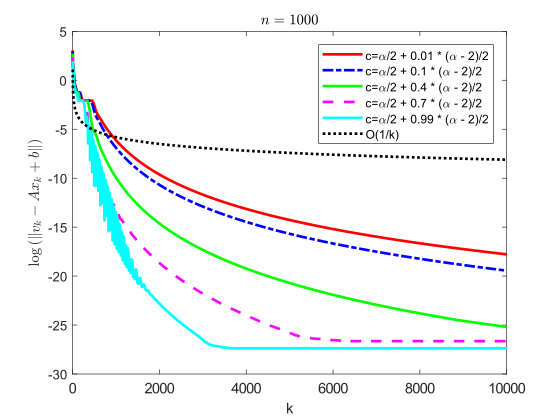}
  \end{subfigure}
	\vskip\baselineskip
	\begin{subfigure}[b]{\textwidth}
		\centering
        \includegraphics[width=0.48\linewidth]{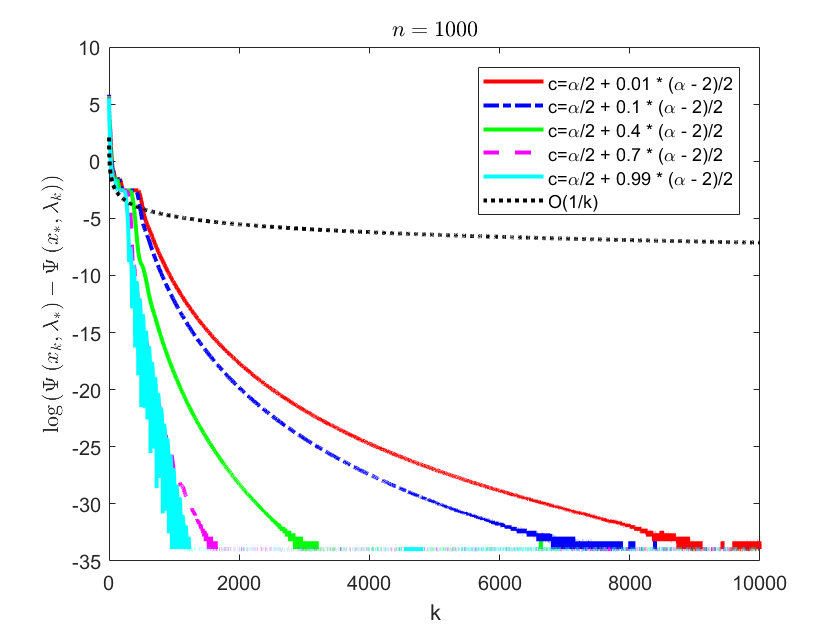}%
        \hfill
		\includegraphics[width=0.48\linewidth]{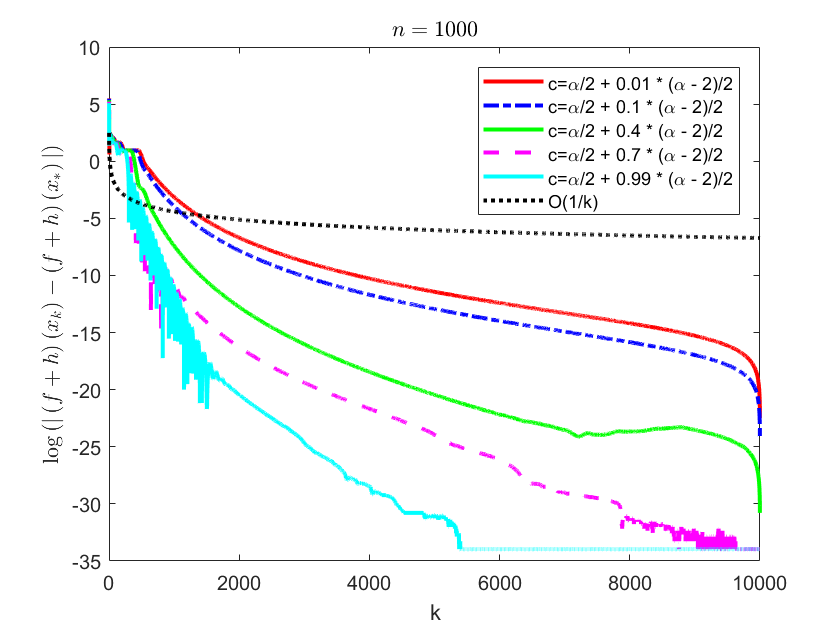}%
	\end{subfigure}
	\caption{The impact of the parameter $c$ on the convergence behavior of the discrete velocity, the tangent residual, the primal-dual gap, and the function values when $\alpha=10$.}
\end{figure}

\begin{figure}[ht!]
	\centering
	\begin{subfigure}[b]{\textwidth}
		\centering
        \includegraphics[width=0.48\linewidth]{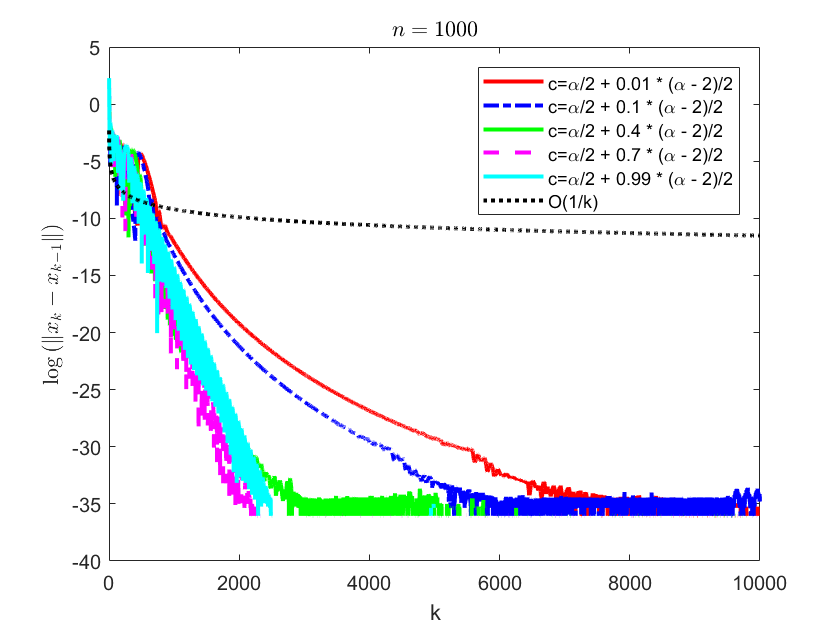}
        \hfill
		\includegraphics[width=0.48\linewidth]{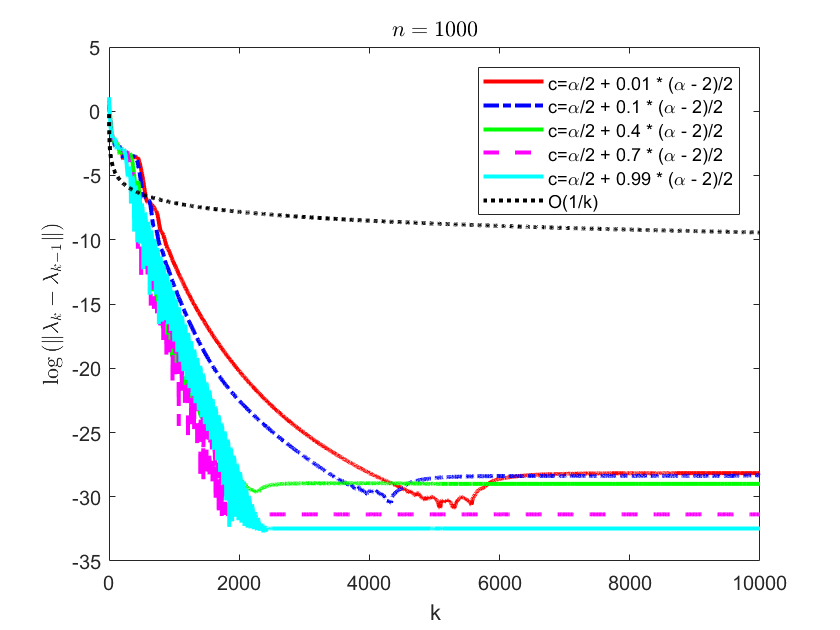}
    \end{subfigure}
	\vskip\baselineskip
	\begin{subfigure}[b]{\textwidth}
		\centering
        \includegraphics[width=0.48\linewidth]{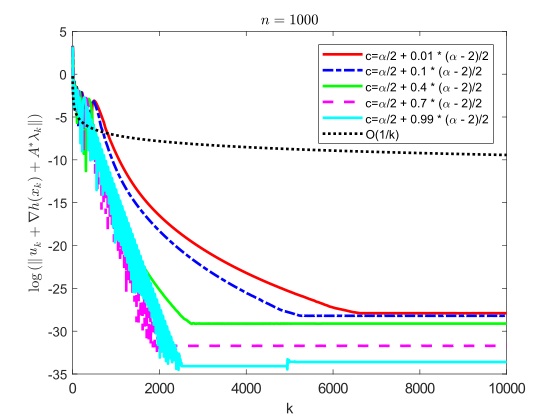}%
	    \hfill
		\includegraphics[width=0.48\linewidth]{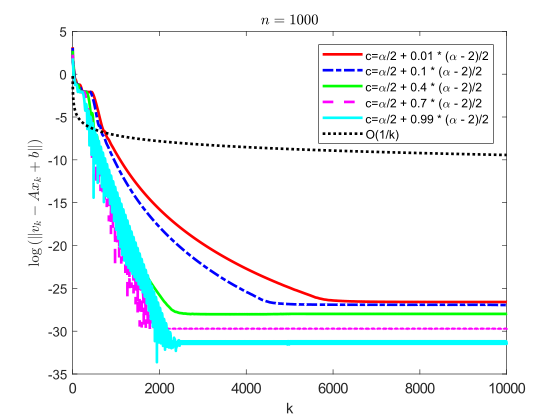}
    \end{subfigure}
	\vskip\baselineskip
	\begin{subfigure}[b]{\textwidth}
        \centering
        \includegraphics[width=0.48\linewidth]{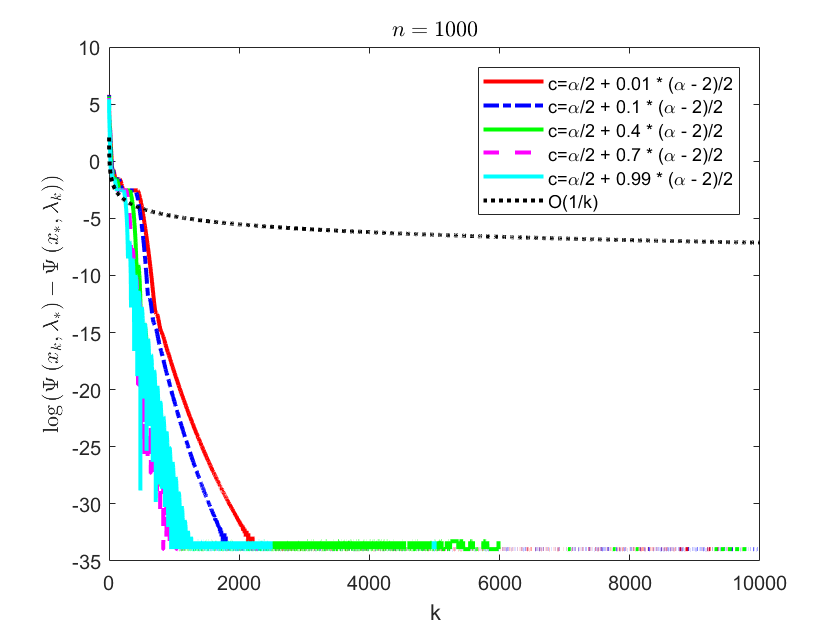}%
        \hfill
		\includegraphics[width=0.48\linewidth]{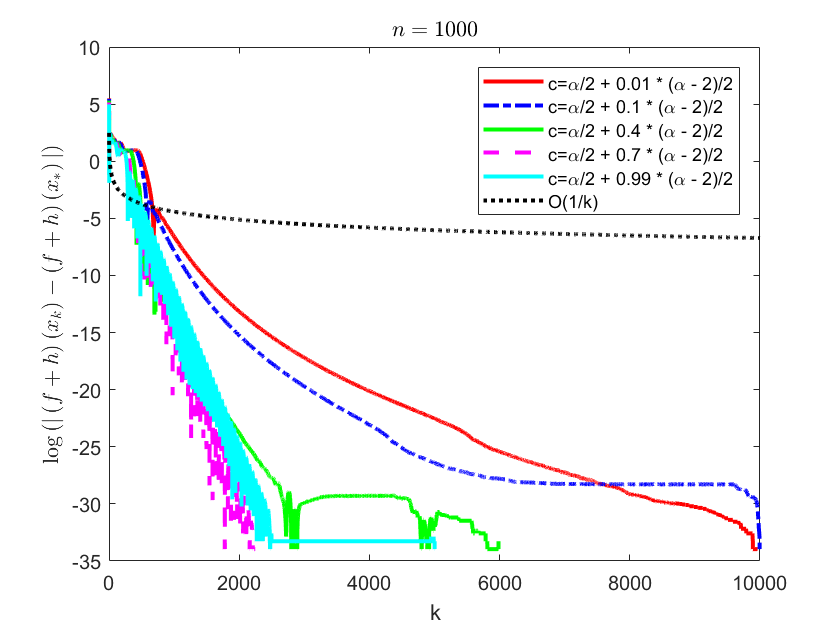}%
	\end{subfigure}
	\caption{The impact of the parameter $c$ on the convergence behavior of the discrete velocity, the tangent residual, the primal-dual gap, and the function values when $\alpha=20$.}\label{figure4}
\end{figure}

\subsection{Comparisons with other algorithms}
In this subsection, we will compare the performance of the Fast RFB algorithm with that of other algorithms from the literature when it comes to finding saddle points of
\begin{equation*}
\Lag \left( x , \lambda \right) = \left\lVert x \right\rVert_{1} + \frac{1}{2}\langle x,Hx\rangle - \langle x,h\rangle + \left\langle \lambda , Ax - b \right\rangle,
\end{equation*}
which is the Lagrangian associated with the optimization problem with linear equality constraints
\begin{align}\label{comp:line}
\min & \left\lVert x \right\rVert_{1} + \frac{1}{2}\langle x,Hx\rangle - \langle x,h\rangle.\\
\textrm{such that} \,	& Ax =b\nonumber
\end{align}
The matrices $A, H \in \R^{n \times n}$ and the vectors $h, b \in \R^n$ are chosen as in the previous subsection.   This problem amounts to solving the monotone inclusion problem  \eqref{intro:pb:eq} for $M(x,\lambda) = (\partial \|\cdot\|_1(x), 0)$ and $F(x,\lambda) = (Hx - h + A^*\lambda,  b-Ax)$.  The Lipschitz constant $L$ of the operator $F$ is  taken as
\begin{equation*}
L = \sqrt{\left(\left\lVert H \right\rVert + \left\lVert A \right\rVert\right)^2 + \left\lVert A  \right\rVert^2}.
\end{equation*}
\begin{figure}[ht!]
\centering
\begin{subfigure}[b]{\textwidth}
\centering
\includegraphics[width=0.48\linewidth]{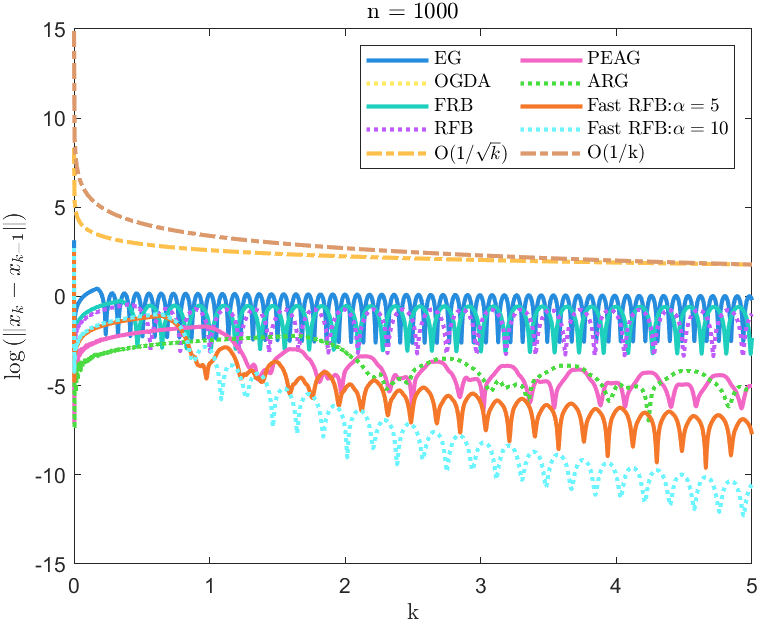}
\hfill
\includegraphics[width=0.48\linewidth]{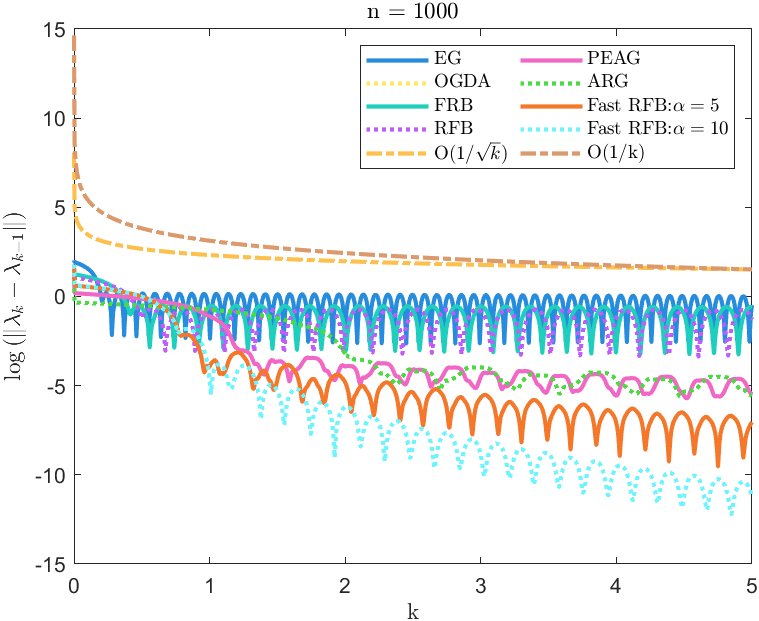}
\end{subfigure}
\vskip\baselineskip
\begin{subfigure}[b]{\textwidth}
\centering
\includegraphics[width=0.48\linewidth]{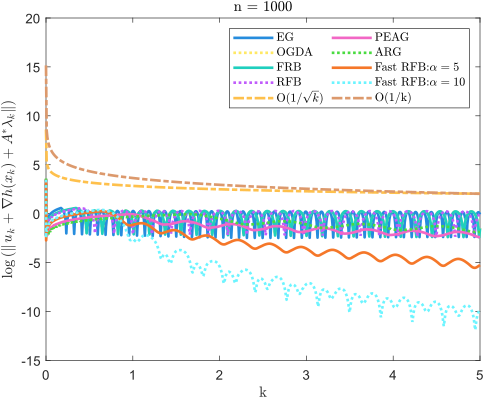}%
\hfill
\includegraphics[width=0.48\linewidth]{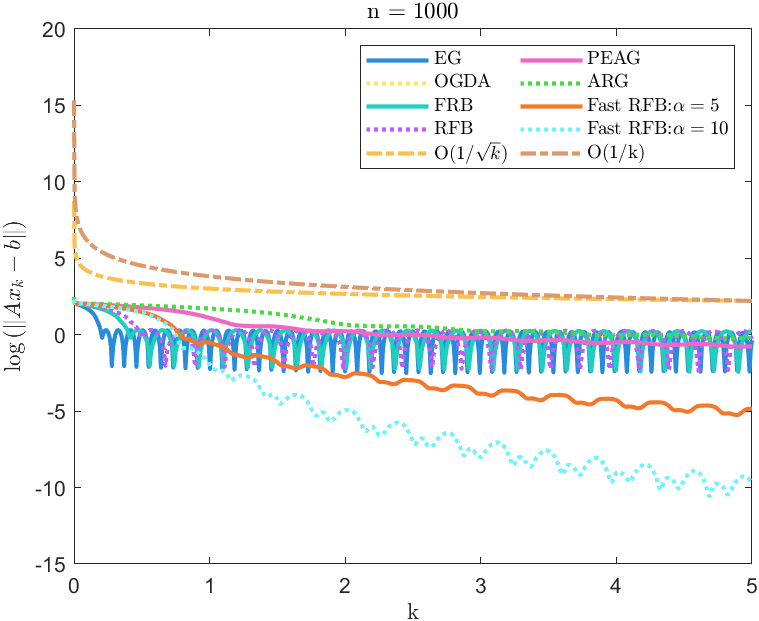}
\end{subfigure}
\vskip\baselineskip
\begin{subfigure}[b]{\textwidth}
\centering
\includegraphics[width=0.48\linewidth]{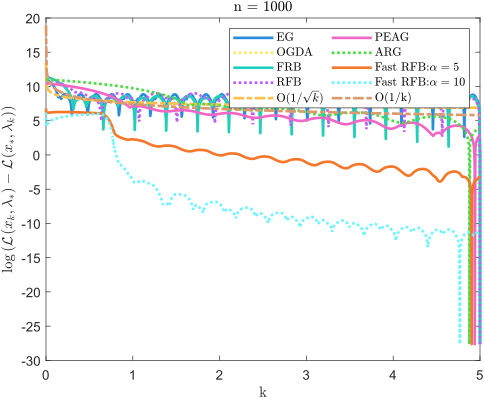}%
\hfill
\includegraphics[width=0.48\linewidth]{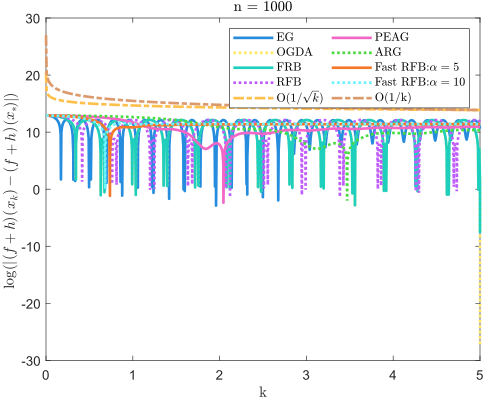}%
\end{subfigure}
\caption{A comparison of various methods in terms of discrete velocity,  tangent residual,  primal-dual gap, and function values for $n=1000$.}\label{figure5}
\end{figure}
In the following, we present the algorithms used in the numerical experiments together with their corresponding parameters:
\begin{itemize}
\item[(1)] EG: Extragradient method \eqref{algo:EG} (see \cite{Korpelevich1976, Antipin1976, Tran-Dinh2023}) with $\gamma=\frac{0.99}{L}$ and $\eta = 1$;
\item[(2)] OGDA: Optimistic Gradient Descent Ascent method \eqref{algo:OGDA} (see \cite{Popov1980, Tran-Dinh2023}) with $\gamma=\frac{0.99}{2L}$ and $\eta = 1$;
\item[(3)] FRB: Forward-Reflected-Backward method \eqref{algo:Malitsky-Tam} (see \cite{Malitsky2015,Cai2022Zheng}) with $\gamma=\frac{0.99}{2L}$;
\item[(4)] RFB: Reflected Forward-Backward method \eqref{algo:Malitsky} (see \cite{Cevher2020Vu}) with $\gamma=\frac{0.99(\sqrt{2}-1)}{L}$;
\item[(5)] PEAG: Past Extra-Anchored Gradient method (see \cite{Tran-Dinh2024,Tran-Dinh2023}) with $\gamma=\sqrt{\frac{2}{17}}\frac{0.99}{L}$;
\item[(6)] ARG: Accelerated Reflected Gradient method (see \cite{Cai2022Zheng}) with $\gamma=\frac{0.99}{\sqrt{24}L}$;
\item[(7)] Fast RFB: our Algorithm \ref{algo:line} (for ${\cal K} = \{0\}$) with $\gamma=\frac{0.99}{2L}$ and $c=\frac{\alpha+ 0.1(\alpha-2)}{2}$,  for $\alpha =5$ and $\alpha=10$.
\end{itemize}
Figure \ref{figure5} presents, on a semi-logarithmic scale, the convergence behavior of the discrete velocity, tangent residual, primal-dual gap, and function values generated by each of the above-mentioned algorithms for the case $n=1000$ after $5\times10^5$ iterations per algorithm.  We also included in our experiments Tseng's Forward-Backward-Forward (FBF) method \eqref{algo:Tseng} (see \cite{Tseng2000, Luo2022Tran-Dinh, Tran-Dinh2023}) with $\gamma=\frac{0.99}{L}$, and the Past Forward-Backward-Forward method \eqref{algo:PFBF} (see \cite{Luo2022Tran-Dinh}) with $\gamma=\frac{0.99}{2L}$.  However, since their performance was quite similar to that of EG and OGDA, respectively,  we decided not to include them in the plots.  From the plots, it can be seen that, for the considered instance, Fast RFB achieves the best convergence performance in most cases among all evaluated methods.

We also analyze the asymptotic behaviour of the residual
$$V \left( x_{k} , \lambda_{k} \right) = \begin{pmatrix}
u_k + \nabla h(x_k) + A^*\lambda_k\\
b-Ax_k
\end{pmatrix} = \begin{pmatrix}
u_k + Hx_k -h + A^*\lambda_k\\
b-Ax_k
\end{pmatrix},  \quad \mbox{where} \ u_k \in \partial \| \cdot\|_1(x_k).$$
Figure \ref{figure7} presents, on a semi-logarithmic scale, the convergence behaviour of the norm of the residual for $n = 200, 500, 800$ and $1000$ after $5\times10^5$ iterations per algorithm.  Fast RFB achieves the best convergence performance in all four instances.

\begin{figure}[ht!]
\centering
\begin{subfigure}[b]{\textwidth}
\centering
\includegraphics[width=0.48\linewidth]{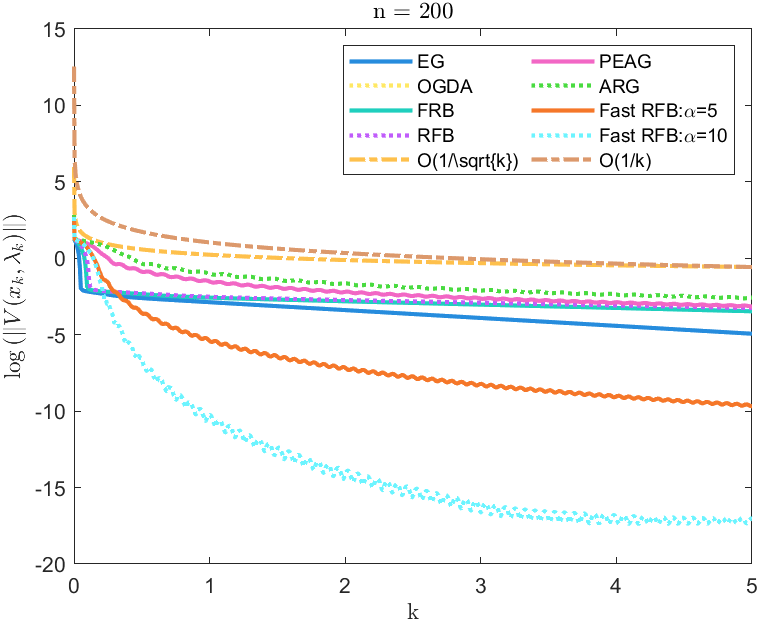}
\hfill
\includegraphics[width=0.48\linewidth]{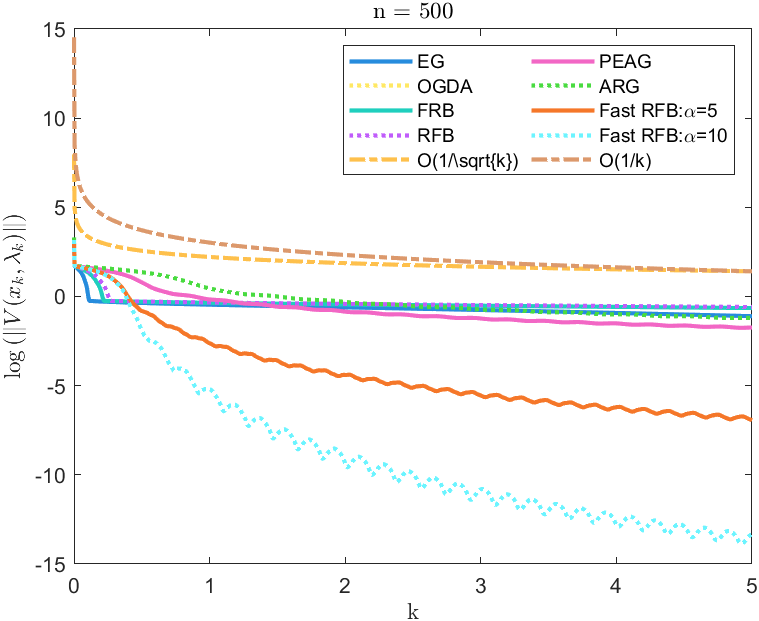}
\end{subfigure}
\vskip\baselineskip
\begin{subfigure}[b]{\textwidth}
\centering
\includegraphics[width=0.48\linewidth]{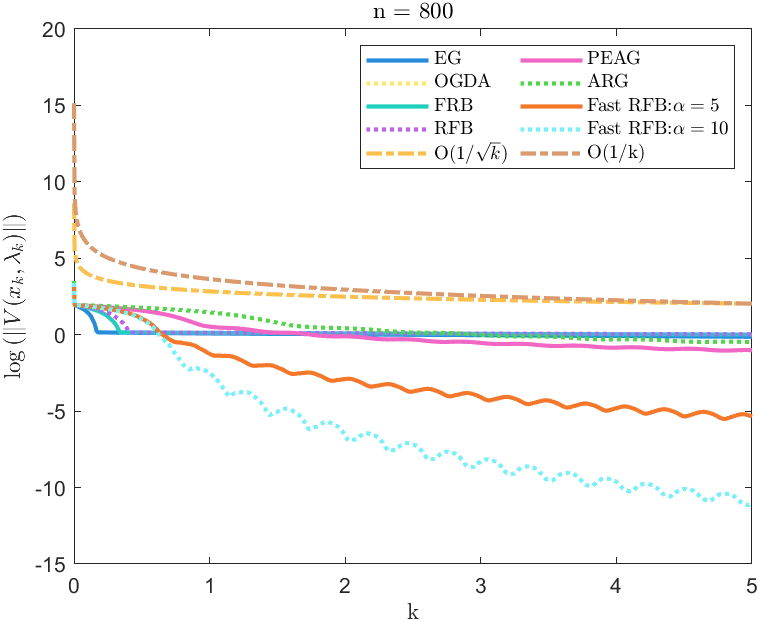}%
\hfill
\includegraphics[width=0.48\linewidth]{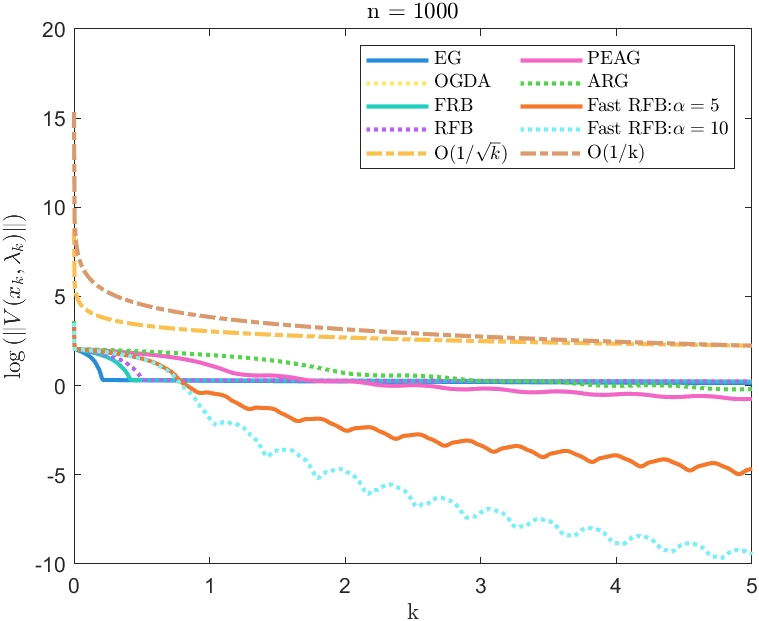}
\end{subfigure}
\caption{A comparison of various methods in terms of the norm of the residual $\left\lVert V \left( x_{k} , \lambda_{k} \right) \right\rVert$ for $n=200, 500, 800$ and $1000$.}\label{figure7}
\end{figure}

\begin{table}[ht!]
\centering
\renewcommand{\arraystretch}{1.2} 
\begin{tabular}{|c||c||c|c|c|c|c|}
\hline
Method & Success & Avg.  $\#$ iter. & Std.  dev.   $\#$ iter.  & Avg.  time& Std.  dev.  time \\
\hline\hline
EG        & 1.0 & 19501.2  & 956.464  & 15.191  & 4.188  \\ \hline
OGDA      & 1.0 & 42866.6  & 1970.930 & 36.133  & 6.013  \\ \hline
FRB       & 1.0 & 42878.3  & 1969.767 & 20.095  & 3.252  \\ \hline
RFB       & 1.0 & 52768.7  & 2408.202 & 23.718  & 4.128  \\ \hline
PEAG      & 1.0 & 215600.1 & 14.130   & 90.937  & 20.876 \\ \hline
ARG       & 1.0 & 365924.6 & 24.359   & 158.859 & 37.623 \\ \hline
Fast RFB: $\alpha=5$  & 1.0 & 32172.8  & 8.066    & 14.735  & 2.606  \\ \hline
Fast RFB: $\alpha=10$ & 1.0 & 21439.8  & 5.007    & 9.683   & 1.555  \\ \hline
\hline
\end{tabular}
\caption{Success rate of the methods in satisfying the stopping criterion 
$\norm{V \left( x_{k} , \lambda_{k} \right)} \leq 10^{-1}$, along with the corresponding runtimes and iteration counts.}
\label{table1e1}
\end{table}
\begin{table}[ht!]
\centering
\renewcommand{\arraystretch}{1.2} 
\begin{tabular}{|c||c||c|c|c|c|c|}
\hline
Method & Success & Avg.  $\#$ iter. & Std.  dev.   $\#$ iter.  & Avg.  time& Std.  dev.  time \\
\hline\hline
EG        & 1.0 & 434690.7 & 1034.704 & 261.149 & 13.192  \\ \hline
OGDA      & 0.0 & NaN      & NaN      & NaN     & NaN    \\ \hline
FRB       & 0.0 & NaN      & NaN      & NaN     & NaN    \\ \hline
RFB       & 0.0 & NaN      & NaN      & NaN     & NaN    \\ \hline
PEAG      & 0.0 & NaN      & NaN      & NaN     & NaN    \\ \hline
ARG       & 0.0 & NaN      & NaN      & NaN     & NaN    \\ \hline
Fast RFB: $\alpha=5$  & 1.0 & 76644.4  & 7.382    & 29.266  & 4.250  \\ \hline
Fast RFB: $\alpha=10$ & 1.0 & 34052.0  & 6.976    & 13.393  & 1.418  \\ \hline
\hline
\end{tabular}
\caption{Success rate of the methods in satisfying the stopping criterion $\norm{V \left( x_{k} , \lambda_{k} \right)} \leq 10^{-2}$, along with the corresponding runtimes and iteration counts.}
\label{table1e2}
\end{table}
\begin{table}[ht!]
\centering
\renewcommand{\arraystretch}{1.2} 
\begin{tabular}{|c||c||c|c|c|c|c|}
\hline
Method & Success & Avg.  $\#$ iter. & Std.  dev.   $\#$ iter.  & Avg.  time& Std.  dev.  time \\
\hline\hline
EG        & 1.0 & 881605.3 & 1719.348 & 579.387 & 10.389  \\ \hline
OGDA      & 0.0 & NaN      & NaN      & NaN     & NaN    \\ \hline
FRB       & 0.0 & NaN      & NaN      & NaN     & NaN    \\ \hline
RFB       & 0.0 & NaN      & NaN      & NaN     & NaN    \\ \hline
PEAG      & 0.0 & NaN      & NaN      & NaN     & NaN    \\ \hline
ARG       & 0.0 & NaN      & NaN      & NaN     & NaN    \\ \hline
Fast RFB: $\alpha=5$  & 1.0 & 179003.7  & 6.993    & 67.637  & 1.640  \\ \hline
Fast RFB: $\alpha=10$ & 1.0 & 51009.8  & 3.490    & 22.242  & 0.296  \\ \hline
\hline
\end{tabular}
\caption{Success rate of the methods in satisfying the stopping criterion 
$\norm{V \left( x_{k} , \lambda_{k} \right)} \leq 10^{-3}$, along with the corresponding runtimes and iteration counts.}
\label{table1e3}
\end{table}

Furthermore,  we compare the performance of all algorithms with termination criterion $\left\lVert V \left( x_{k} , \lambda_{k} \right) \right\rVert$ $\leq\varepsilon$ under varying precision thresholds $\varepsilon \in \{10^{-1},  10^{-2},  10^{-3}\}$ for $n=200$.  Each algorithm is terminated after $10^6$ iterations, even if the stopping criterion is not satisfied.  We run the experiment with 10 different initializations and record the average number of iterations, the standard deviation of the number of iterations, the average runtime,  and the standard deviation of the runtime (in seconds).  The results are summarized in Tables  \ref{table1e1}–\ref{table1e3}.

For low-accuracy settings,  such as $\varepsilon = 10^{-1}$,  EG requires the fewest iterations,  whereas our method reaches the solution in less computational time.  As the accuracy requirement increases, our method not only attains the stopping criterion faster than the others but also consistently requires less computational time.  Moreover, our method appears to be more stable, in the sense that its performance does not vary significantly across different initializations,  as indicated by the standard deviation of both the number of iterations and the runtime.

\vskip 2mm

\noindent \textbf{Data availability} Not applicable.

\section*{Declarations}

\textbf{Competing interests} The authors have no competing interests to declare.

\vskip 2mm

\noindent \textbf{Acknowledgements}  The authors would like to thank the two anonymous reviewers for their comments and suggestions, which have helped to improve the quality of the presentation.


\appendix
\section{Auxiliary results}
In the first part of the appendix, we provide some basic auxiliary results for the analysis
carried out in the paper.

The following result was introduced as Lemma A.5 in \cite{Bot2023RobertNguyen}.
\begin{lem}\label{lem:lim-u-k}
	Let $a \geq 1$ and $\left( q_{k} \right)_{k \geq 0}$ be a bounded sequence in $\cH$ such that
	\begin{equation*}
	\lim_{k \to + \infty} \left( q_{k+1} + \frac{k}{a} \left( q_{k+1} - q_{k} \right) \right) = p \in \cH .
	\end{equation*}
	Then $\lim_{k \to + \infty} q_{k} = p$.
\end{lem}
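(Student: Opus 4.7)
The plan is to convert the additive update rule into a multiplicative one that reveals the structure of a convex combination. Setting $u_{k} := q_{k+1} + \frac{k}{a}(q_{k+1} - q_{k})$ and solving for $q_{k+1}$, one obtains the recursion
\begin{equation*}
q_{k+1} = \frac{k}{a+k}\, q_{k} + \frac{a}{a+k}\, u_{k} \qquad \forall k \geq 0,
\end{equation*}
which expresses $q_{k+1}$ as a convex combination of $q_{k}$ and $u_{k}$. A preliminary translation $q_{k} \leftarrow q_{k} - p$, $u_{k} \leftarrow u_{k} - p$ preserves this recursion and reduces the problem to the case $p = 0$: assuming $u_{k} \to 0$ and $(q_{k})$ bounded, I need to show $q_{k} \to 0$.

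Next I will iterate the recursion from an arbitrary starting index $K$, yielding
\begin{equation*}
q_{k+1} = \left(\prod_{j=K}^{k} \frac{j}{a+j}\right) q_{K} + \sum_{i=K}^{k} \left(\prod_{j=i+1}^{k} \frac{j}{a+j}\right) \frac{a}{a+i}\, u_{i},
\end{equation*}
with the usual convention that an empty product equals $1$. An easy induction shows that the coefficient in front of $q_{K}$ together with the coefficients multiplying $u_{K},\dots,u_{k}$ form a set of nonnegative weights summing to $1$.

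Given $\varepsilon > 0$, I will choose $K$ so that $\|u_{k}\| \leq \varepsilon$ for all $k \geq K$. Then the sum in the displayed identity is bounded in norm by $\varepsilon$ (convex-combination property), while the ``memory'' term $\|q_{K}\| \prod_{j=K}^{k} \frac{j}{a+j}$ tends to zero as $k \to +\infty$. Indeed, taking logarithms and using the inequality $\log(j/(a+j)) = \log(1 - a/(a+j)) \leq -a/(a+j)$ reduces the decay to the divergence of the harmonic-type series $\sum_{j \geq K} 1/(a+j) = +\infty$, which holds because $a$ is a fixed positive constant. Boundedness of $(q_{k})$ secures that $\|q_{K}\|$ is finite. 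Combining the two estimates gives $\limsup_{k \to +\infty} \|q_{k}\| \leq \varepsilon$, and since $\varepsilon > 0$ was arbitrary, $q_{k} \to 0$, i.e., in the original variables $q_{k} \to p$.

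There is no genuine obstacle here — the argument is a standard discrete averaging / Stolz--Ces\`aro style estimate. The only point worth checking carefully is the decay of the product $\prod_{j=K}^{k} j/(a+j)$, but the hypothesis $a \geq 1$ is more than sufficient to guarantee the required harmonic divergence, and the boundedness assumption on $(q_{k})$ enters only to control the single term $\|q_{K}\|$.
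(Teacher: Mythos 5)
Your proof is correct. Note that the paper itself does not prove this lemma; it simply cites it as Lemma~A.5 of the Fast OGDA paper \cite{Bot2023RobertNguyen}, so there is no in-paper argument to compare against. Your route --- rewriting the hypothesis as the convex-combination recursion $q_{k+1} = \tfrac{k}{a+k}q_{k} + \tfrac{a}{a+k}u_{k}$, unrolling it with the product weights $\prod_{j} \tfrac{j}{a+j}$, and using that these weights are nonnegative and sum to one while the memory weight decays because $\sum_{j}\tfrac{a}{a+j}$ diverges --- is the standard discrete integrating-factor argument and is exactly in the spirit of the cited source. All the steps check out: the weights summing to one follows by the one-line induction you indicate, the tail estimate $\bigl\lVert \sum_{i=K}^{k} W_{i}^{k} u_{i} \bigr\rVert \leq \varepsilon$ is immediate from the convexity of the weights, and the logarithmic estimate for the product is valid for any fixed $a>0$ (so $a \geq 1$ is indeed not needed). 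One minor remark: as you half-observe yourself, the boundedness hypothesis on $(q_{k})_{k \geq 0}$ is never genuinely used in your argument, since $\lVert q_{K} \rVert$ is finite for a single index $K$ regardless; your proof therefore establishes a slightly stronger statement than the lemma as quoted, which is harmless here.
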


The following result concerns quasi-F\'ejer monotone sequences and is a particular instance of Lemma 5.31 in \cite{BauschkeCombettes2}.
\begin{lem}\label{lem:quasi-Fej}
	Let $ (a_{k})_{k \geq 0}$, $ (b_{k})_{k \geq 0}$, and $ (d_{k})_{k \geq 0} $ be sequences of real numbers. Assume that $ (a_{k})_{k \geq 0} $ is bounded from below, and $ (b_{k})_{k \geq 0} $ and $ (d_{k})_{k \geq 0} $ are nonnegative sequences such that $ \sum_{k \geq 0} d_{k} < + \infty$. If
	\begin{equation*}
	a_{k+1} \leq \left( 1 + d_{k} \right) a_{k} - b_{k}
	\quad \forall k \geq 0,
	\end{equation*}
	then the following statements are true:
\begin{itemize}
\item[(i)]  the sequence $ (b_{k})_{k \geq 0} $ is summable, i.e., $ \sum_{k \geq 0} b_{k} < +\infty$;

\item[(ii)] the sequence $ (a_{k})_{k \geq 0} $ is convergent.
\end{itemize}
\end{lem}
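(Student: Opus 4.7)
The strategy is the classical rescaling trick: absorb the multiplicative perturbation $(1+d_k)$ into a telescoping product so the recursion becomes monotone. Set $P_0 := 1$ and $P_{k+1} := \prod_{i=0}^{k}(1+d_i)$ for $k \geq 0$. Since $d_k \geq 0$ and $\sum_{k \geq 0} d_k < +\infty$, the elementary inequality $\log(1+t) \leq t$ gives $\log P_{k+1} \leq \sum_{i=0}^{k} d_i$, so $(P_k)_{k \geq 0}$ is nondecreasing and converges to some limit $P_\infty \in [1, +\infty)$. In particular $1 \leq P_k \leq P_\infty$ for all $k \geq 0$.

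Define the rescaled sequences $\tilde a_k := a_k / P_k$ and $\tilde b_k := b_k / P_{k+1}$. Dividing the assumed inequality by $P_{k+1} = (1+d_k) P_k > 0$ yields
\begin{equation*}
\tilde a_{k+1} \;\leq\; \tilde a_k - \tilde b_k \qquad \forall k \geq 0,
\end{equation*}
so $(\tilde a_k)_{k \geq 0}$ is nonincreasing. The next step is to show $(\tilde a_k)_{k \geq 0}$ is bounded below. Let $m \in \mathbb{R}$ be a lower bound for $(a_k)$. Since $1 \leq P_k \leq P_\infty$, one has $\tilde a_k \geq m / P_\infty$ if $m \geq 0$ and $\tilde a_k \geq m$ if $m \leq 0$; in either case $(\tilde a_k)$ is bounded below. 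Hence $\tilde a_\infty := \lim_{k \to +\infty} \tilde a_k \in \mathbb{R}$ exists.

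For (i), telescoping the monotone recursion gives $\sum_{k=0}^{K} \tilde b_k \leq \tilde a_0 - \tilde a_{K+1}$ for every $K \geq 0$, so $\sum_{k \geq 0} \tilde b_k < +\infty$. Using $b_k = P_{k+1} \tilde b_k \leq P_\infty \tilde b_k$, summability of $(b_k)_{k \geq 0}$ follows. For (ii), from $a_k = P_k \tilde a_k$ and $P_k \to P_\infty \in (0, +\infty)$, $\tilde a_k \to \tilde a_\infty$, we conclude $a_k \to P_\infty \tilde a_\infty \in \mathbb{R}$.

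The only subtlety worth flagging is the lower bound for $(\tilde a_k)$: dividing by $P_k$ is innocuous when $a_k \geq 0$, but if $a_k$ may be negative one must exploit that $P_k \geq 1$ to prevent the lower bound from degenerating. Beyond that, the argument is a direct rescaling/telescoping, and no further use of the structure of $(a_k)_{k \geq 0}$ is needed.
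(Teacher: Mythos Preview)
Your proof is correct. The rescaling by the partial products $P_k = \prod_{i<k}(1+d_i)$ is the standard way to handle this quasi-F\'ejer inequality, and every step you wrote checks out, including the care you took with the lower bound of $(\tilde a_k)$ when $a_k$ may be negative.

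Regarding comparison with the paper: the paper does not give its own proof of this lemma. It simply states the result and cites it as a particular instance of \cite[Lemma~5.31]{BauschkeCombettes2}. Your argument is essentially the classical proof one finds in that reference, so there is no methodological difference to discuss; you have supplied the details the paper omits.
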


To show the convergence of the sequence of generated iterates we will use the following so-called Opial Lemma  \cite{BauschkeCombettes2}.
\begin{lem}\label{lem:opial}
Let $ S \subseteq { \cH} $ be a nonempty set and $(z_{k})_{k \geq 0} { \subseteq \cH}$ a sequence that satisfies the following assumptions:
\begin{itemize}
\item[(i)] for every $ z_{\ast} \in S $, $ \lim_{k \to + \infty} \norm{z_{k} - z_{\ast}} $ exists;

\item[(ii)]  every weak sequential cluster point of $ (z_{k})_{k \geq 0} $ belongs to $S$.
\end{itemize}
Then $ (z_{k})_{k \geq 0} $ converges weakly to an element in $S$.
\end{lem}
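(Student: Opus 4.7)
The plan is to establish weak convergence of $(z_k)_{k \geq 0}$ by verifying that the sequence admits at most one weak sequential cluster point, combined with the existence of at least one such point via weak compactness of bounded sets in a Hilbert space.

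First I would note that assumption (i) immediately yields boundedness of $(z_k)_{k \geq 0}$: fixing any $z_\ast \in S$ (which exists since $S \neq \emptyset$), the sequence $\|z_k - z_\ast\|$ is convergent, hence bounded, and the triangle inequality transfers this boundedness to $(z_k)_{k \geq 0}$. Since bounded sets in the Hilbert space $\cH$ are weakly sequentially compact (Banach--Alaoglu together with reflexivity), there exists at least one weak sequential cluster point of $(z_k)_{k \geq 0}$, and by assumption (ii), every such cluster point lies in $S$.

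The crux of the argument is uniqueness of the weak cluster point. Suppose for contradiction that $z, z' \in \cH$ are two weak cluster points of $(z_k)_{k \geq 0}$, attained along subsequences $(z_{k_j})_{j \geq 0}$ and $(z_{k_l})_{l \geq 0}$ respectively. By (ii), both $z$ and $z'$ belong to $S$, so by (i) the limits $\alpha := \lim_{k \to +\infty} \|z_k - z\|^2$ and $\beta := \lim_{k \to +\infty} \|z_k - z'\|^2$ both exist in $\R$. The key identity to exploit is
\begin{equation*}
\|z_k - z\|^2 - \|z_k - z'\|^2 = 2 \langle z_k, z' - z \rangle + \|z\|^2 - \|z'\|^2,
\end{equation*}
which shows that $\lim_{k \to +\infty} \langle z_k, z' - z \rangle$ exists. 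Evaluating this limit along $(z_{k_j})_{j \geq 0}$ gives $\langle z, z' - z \rangle$, while evaluating it along $(z_{k_l})_{l \geq 0}$ gives $\langle z', z' - z \rangle$. Equating the two yields $\langle z' - z, z' - z\rangle = 0$, so $z = z'$.

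Having shown that all weak sequential cluster points of $(z_k)_{k \geq 0}$ coincide with a single element $\bar z \in S$, I would conclude by the standard argument that the whole bounded sequence converges weakly to $\bar z$: if not, there would exist $\varphi \in \cH$, $\varepsilon > 0$, and a subsequence along which $|\langle z_k - \bar z, \varphi\rangle| \geq \varepsilon$, and extracting a further weakly convergent subsequence would produce a cluster point distinct from $\bar z$, contradicting uniqueness. The main (and essentially only) delicate step is the two-cluster-point identity above; everything else is routine Hilbert-space compactness. Since $\bar z \in S$, the conclusion of the lemma follows.
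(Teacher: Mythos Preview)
Your proof is correct and is the standard argument for Opial's Lemma. The paper does not actually supply a proof of this statement; it merely records it as an auxiliary result and cites \cite{BauschkeCombettes2}, so there is nothing to compare against beyond noting that your argument is precisely the classical one found in that reference.
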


The convergence analysis also uses the following result.
\begin{lem}
\label{lem:quad}
Let $a, b, c \in \R$ such that $a \neq 0$ and $b^{2} - ac \leq 0$.
The following statements are true:
\begin{itemize}
\item[(i)]	if $a > 0$, then
\begin{equation*}
a \left\lVert x \right\rVert ^{2} + 2b \left\langle x, y \right\rangle + c \left\lVert y \right\rVert ^{2} \geq 0 \quad \forall x, y \in { \cH} ;
\end{equation*}

\item[(ii)]	if $a < 0$, then
\begin{equation*}
a \left\lVert x \right\rVert ^{2} + 2b \left\langle x, y \right\rangle + c \left\lVert y \right\rVert ^{2} \leq 0 \quad \forall x, y \in { \cH} .
\end{equation*}
\end{itemize}
\end{lem}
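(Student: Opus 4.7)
The plan is to reduce both items to a single completing-the-square identity and then to read off the signs. Since $a \neq 0$, for every $x, y \in \cH$ the pointwise algebraic identity
\[
a\|x\|^2 + 2b\langle x, y\rangle + c\|y\|^2 = a\left\|x + \frac{b}{a}\,y\right\|^2 + \frac{ac - b^2}{a}\|y\|^2
\]
holds; it follows at once by expanding $\|x + (b/a)y\|^2$ via $\|u+v\|^2 = \|u\|^2 + 2\langle u,v\rangle + \|v\|^2$ and collecting terms. This identity is valid in any real Hilbert space, without any sign assumption on $a$, and it turns the problem into a check of signs of the two summands on the right-hand side.

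For item (i), with $a > 0$ and $b^2 - ac \leq 0$, the first summand is a nonnegative scalar times a squared norm, hence $\geq 0$; the second summand has numerator $ac - b^2 \geq 0$ and positive denominator $a$, hence is also $\geq 0$. Adding the two yields the desired inequality. For item (ii), with $a < 0$ and $b^2 - ac \leq 0$, the first summand is $\leq 0$ since the squared norm is multiplied by the negative scalar $a$, while the second summand has nonnegative numerator $ac - b^2$ and negative denominator $a$, so it is $\leq 0$. The sum is therefore $\leq 0$, which proves the claim.

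There is essentially no technical obstacle here; the only point demanding a moment of care is that dividing the nonnegative quantity $ac - b^2$ by $a$ reverses the sign of the second summand according to the sign of $a$, and it is precisely this dichotomy that produces the two complementary statements (i) and (ii). An alternative route would be to use Cauchy--Schwarz to bound $2b\langle x, y\rangle$ by $\pm 2|b|\,\|x\|\,\|y\|$ and then invoke the discriminant criterion $4(b^2 - ac) \leq 0$ for the resulting real quadratic in $\|x\|$, but this would require separate treatment of the sign of $b\langle x,y\rangle$ and of the degenerate case $y = 0$, whereas the completing-the-square approach above handles all situations uniformly in one line.
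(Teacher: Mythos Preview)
Your proof is correct. The completing-the-square identity
\[
a\|x\|^2 + 2b\langle x, y\rangle + c\|y\|^2 = a\left\|x + \tfrac{b}{a}\,y\right\|^2 + \dfrac{ac - b^2}{a}\|y\|^2
\]
is valid in any real Hilbert space whenever $a \neq 0$, and the sign analysis you give for each of the two summands under the hypothesis $b^2 - ac \leq 0$ is accurate in both cases (i) and (ii).

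As for comparison: the paper states this lemma in its appendix of auxiliary results but does not supply a proof, so there is nothing to compare against. Your argument is the standard one and is entirely adequate here.
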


\section{Missing proofs}

\begin{proof}[Proof of Lemma \ref{lem:dE}]
Recall that by \eqref{split:defi:u-k-1-lambda}, we have for all $k \geq 1$
\begin{equation*}
u_{\lambda,s,k}
= 2 \lambda \left( z_{k} - z_{\ast} \right) + 2k \left( z_{k} - z_{k-1} \right) + s \gamma k v_{k}.
\end{equation*}
Similarly,
\begin{equation}\label{split:defi:u-k-lambda}
u_{\lambda,s,k+1}
= 2 \lambda \left( z_{k+1} - z_{\ast} \right) + 2 \left( k + 1 \right) \left( z_{k+1} - z_{k} \right) + s \gamma \left( k + 1 \right) v_{k+1} .
\end{equation}
After subtraction and by using \eqref{split:d-u}, we find that
\begin{equation}\label{split:defi:u-k-lambda:dif}
\begin{split}
& u_{\lambda,s,k+1} - u_{\lambda,s,k} \\
= & \ 2 \left( \lambda + 1 - \alpha \right) \left( z_{k+1} - z_{k} \right) + 2 \left( k + \alpha \right) \left( z_{k+1} - z_{k} \right) - 2k \left( z_{k} - z_{k-1} \right) \\
& + s \gamma v_{k+1} + s \gamma k \left( v_{k+1} - v_{k} \right) \\
= & \ 2 \left( \lambda + 1 - \alpha \right) \left( z_{k+1} - z_{k} \right) -  \left( 2c  - s \right) \gamma v_{k+1} - \gamma \Big((2-s)k + 2\left( \alpha - c \right)\Big)\left( v_{k+1} - v_{k} \right) .
\end{split}
\end{equation}
For all $k \geq 1$ we have
\begin{equation}\label{split:dif:u-lambda:pre}
\dfrac{1}{2} \left( \left\lVert u_{\lambda,s,k+1} \right\rVert ^{2} - \left\lVert u_{\lambda,s,k} \right\rVert ^{2} \right)
= \left\langle u_{\lambda,s,k+1}, u_{\lambda,s,k+1} - u_{\lambda,s,k} \right\rangle - \dfrac{1}{2} \left\lVert u_{\lambda,s,k+1} - u_{\lambda,s,k} \right\rVert ^{2},
\end{equation}
and for all $ k \geq 0 $
\begin{equation}\label{split:dif:norm}
\begin{split}
& 2 \lambda \left( \alpha - 1 - \lambda \right) \left( \left\lVert z_{k+1} - z_{\ast} \right\rVert ^{2} - \left\lVert z_{k} - z_{\ast} \right\rVert ^{2} \right) \\
= & \ 4 \lambda \left( \alpha - 1 - \lambda \right) \left\langle z_{k+1} - z_{\ast}, z_{k+1} - z_{k} \right\rangle - 2 \lambda \left( \alpha - 1 - \lambda \right) \left\lVert z_{k+1} - z_{k} \right\rVert ^{2}.
\end{split}
\end{equation}
We use \eqref{split:defi:u-k-lambda} and \eqref{split:defi:u-k-lambda:dif} to derive for all $k \geq 1$
\begin{equation}
\begin{aligned}\label{split:dif:u-lambda:inn}
& \left\langle u_{\lambda,s,k+1}, u_{\lambda,s,k+1} - u_{\lambda,s,k} \right\rangle\\
= & \ 4 \lambda \left( \lambda + 1 - \alpha \right) \left\langle z_{k+1} - z_{\ast}, z_{k+1} - z_{k} \right\rangle - 2 \lambda \gamma \left( 2c - s \right)  \left\langle z_{k+1} - z_{\ast}, v_{k+1} \right\rangle \\
& - 2\lambda \gamma \Big((2-s)k + 2\left( \alpha - c \right)\Big) \left\langle z_{k+1} - z_{\ast}, v_{k+1} - v_{k} \right\rangle
+ 4 \left( \lambda + 1 - \alpha \right) \left( k + 1 \right) \left\lVert z_{k+1} - z_{k} \right\rVert ^{2} \\
& + 2 \Bigl( s \left( \lambda + 1 - \alpha \right) +  s - 2c \Bigr) \gamma \left( k + 1 \right) \left\langle z_{k+1} - z_{k}, v_{k+1} \right\rangle \\
& - 2\gamma \Big((2-s)k + 2\left( \alpha - c \right)\Big) \left( k + 1 \right) \left\langle z_{k+1} - z_{k}, v_{k+1} - v_{k} \right\rangle \\
& -  s\left( 2c - s \right) \gamma^{2} \left( k + 1 \right) \left\lVert v_{k+1} \right\rVert ^{2} - s \gamma^{2} \Big((2-s)k + 2\left( \alpha - c \right)\Big) \left( k + 1 \right) \left\langle v_{k+1}, v_{k+1} - v_{k} \right\rangle,
\end{aligned}
\end{equation}
and
\begin{equation}\label{split:dif:u-lambda:norm}
\begin{split}
& \minus \dfrac{1}{2} \left\lVert u_{\lambda,s,k+1} - u_{\lambda,s,k} \right\rVert ^{2} \\
= & \minus 2 \left( \lambda + 1 - \alpha \right) ^{2} \left\lVert z_{k+1} - z_{k} \right\rVert ^{2} + 2 \left( 2c -s \right) \left( \lambda + 1 - \alpha \right) \gamma \left\langle z_{k+1} - z_{k}, v_{k+1} \right\rangle \\
& - \dfrac{1}{2} \left( 2c - s \right)^2 \gamma^{2} \left\lVert v_{k+1} \right\rVert ^{2}
- \dfrac{1}{2} \gamma^{2} \Big((2-s)k + 2\left( \alpha - c \right)\Big)^2 \left\lVert v_{k+1} - v_{k} \right\rVert ^{2} \\
& + 2\left( \lambda + 1 - \alpha \right) \gamma \Big((2-s)k + 2\left( \alpha - c \right)\Big)\left\langle z_{k+1} - z_{k}, v_{k+1} - v_{k} \right\rangle \\
& - \gamma^{2} \Big((2-s)k + 2\left( \alpha - c \right)\Big)( 2c - s ) \left\langle v_{k+1}, v_{k+1} - v_{k} \right\rangle .
\end{split}
\end{equation}
By plugging \eqref{split:dif:u-lambda:inn} and \eqref{split:dif:u-lambda:norm} into \eqref{split:dif:u-lambda:pre}, we get for all $k \geq 1$
\begin{equation}\label{split:dif:u-lambda}
\begin{split}
& \dfrac{1}{2} \left( \left\lVert u_{\lambda,s,k+1} \right\rVert ^{2} - \left\lVert u_{\lambda,s,k} \right\rVert ^{2} \right)\\
=  & \ 4 \lambda \left( \lambda + 1 - \alpha \right) \left\langle z_{k+1} - z_{\ast}, z_{k+1} - z_{k} \right\rangle \\
& - 2\left( 2c - s \right) \lambda \gamma \left\langle z_{k+1} - z_{\ast}, v_{k+1} \right\rangle  - 2\Big((2-s)k + 2\left( \alpha - c \right)\Big) \lambda \gamma  \left\langle z_{k+1} - z_{\ast}, v_{k+1} - v_{k} \right\rangle\\
& + 2 \left( \lambda + 1 - \alpha \right) \left( 2k + \alpha + 1 - \lambda \right) \left\lVert z_{k+1} - z_{k} \right\rVert ^{2}- \dfrac{1}{2}\gamma^2 \Big((2-s)k + 2\left( \alpha - c \right)\Big)^2\left\lVert v_{k+1} - v_{k} \right\rVert ^{2} \\
& + 2 \gamma \left(\Bigl(  s  \left( \lambda + 1 - \alpha \right) +s - 2c \Bigr) (k+1) + \left( \lambda + 1 - \alpha \right)(2 c - s)\right)\left\langle z_{k+1} - z_{k}, v_{k+1} \right\rangle \\
& - 2\gamma\Big((2-s)k + 2\left( \alpha - c \right)\Big) \left( k + \alpha - \lambda \right) \left\langle z_{k+1} - z_{k}, v_{k+1} - v_{k} \right\rangle
\\
& - \dfrac{1}{2 }\gamma^2  \left( 2c - s \right)  \left( 2sk + s + 2c\right) \left\lVert v_{k+1} \right\rVert ^{2} \\
& - \gamma^2 \Big((2-s)k + 2\left( \alpha - c \right)\Big) \bigl( s k + 2 c \bigr) \left\langle v_{k+1}, v_{k+1} - v_{k} \right\rangle.
\end{split}
\end{equation}
Furthermore, for all $k \geq 1$ we have
\begin{equation}\label{split:dif:vi}
\begin{split}
& 2\lambda \gamma\Big((2-s)(k+1) + 2\left( \alpha - c \right)\Big) \left\langle z_{k+1} - z_{\ast}, v_{k+1} \right\rangle - 2\lambda \gamma\Big((2-s)k + 2\left( \alpha - c \right)\Big) \left\langle z_{k} - z_{\ast}, v_{k} \right\rangle \\
= & \ 2\lambda \gamma\Big((2-s)k + 2\left( \alpha - c \right)\Big) \big( \left\langle z_{k+1} - z_{\ast}, v_{k+1} \right\rangle - \left\langle z_{k} - z_{\ast}, v_{k} \right\rangle \big) \\
& +2\lambda \gamma(2-s)\left\langle z_{k+1} - z_{\ast}, v_{k+1} \right\rangle\\
= & \ 2\lambda \gamma(2-s)\left\langle z_{k+1} - z_{\ast}, v_{k+1} \right\rangle
+ 2\lambda \gamma\Big((2-s)k + 2\left( \alpha - c \right)\Big) \left\langle z_{k+1} - z_{\ast}, v_{k+1} - v_{k} \right\rangle \\
& - 2\lambda \gamma\Big((2-s)k + 2\left( \alpha - c \right)\Big) \left\langle z_{k+1} - z_{k}, v_{k+1} - v_{k} \right\rangle \\
& + 2\lambda \gamma\Big((2-s)k + 2\left( \alpha - c \right)\Big) \left\langle z_{k+1} - z_{k}, v_{k+1} \right\rangle,
\end{split}
\end{equation}
and
\begin{equation}\label{split:dif:eq}
\begin{split}
& \frac{1}{2}\gamma^2\Big((2-s)(k+1) + 2\left( \alpha - c \right)\Big) \Bigl( s \left( k+1 \right) + 2 c \Bigr) \left\lVert v_{k+1} \right\rVert ^{2}\\
& - \frac{1}{2}\gamma^2\Big((2-s)k + 2\left( \alpha - c \right)\Big) \Bigl( s  k  + 2 c \Bigr) \left\lVert v_{k} \right\rVert ^{2} \\
= & \ \gamma^2\Big((2-s)k + 2\left( \alpha - c \right)\Big) ( s  k  + 2 c )\Bigl( \left\langle v_{k+1}, v_{k+1} - v_{k} \right\rangle - \frac{1}{2} \left\lVert v_{k+1} - v_{k} \right\rVert^{2}\Bigr)\\
& +\frac{1}{2}\gamma^2\big((2-s)(2ks + 2c +s) + 2(\alpha - c )s \big)\left\lVert v_{k+1}\right\rVert^{2}.
\end{split}
\end{equation}
Summing \eqref{split:dif:norm}, \eqref{split:dif:u-lambda}, \eqref{split:dif:vi} and \eqref{split:dif:eq}, yields \eqref{split:dE} for every $k \geq 1$.
\end{proof}

\begin{proof}[Proof of Lemma \ref{lem:reg}]
(i) By the definition of $\cG_{\lambda,s,k} $ in \eqref{split:defi:F}, we have for every $k \geq 2$
\begin{equation}\label{ene:dF}
\begin{split}
& \ \cG_{\lambda,s,k+1}  - \cG_{\lambda,s,k} \\
= & \ \cE_{\lambda, s, k+1} - \cE_{\lambda, s, k} - 2\gamma \left[ \big( \left( 2 - s \right)\left( k + 1 \right) + 2 \left( \alpha - c \right)\big) \left( k + 1 \right) \left\langle z_{k+1} - z_{k}, F \left( z_{k+1} \right) - F \left( w_{k} \right) \right\rangle \right.\\
&\left. \qquad \qquad \qquad \qquad \qquad - \big( \left( 2 - s \right)k + 2 \left( \alpha - c \right)\big) k \left\langle z_{k} - z_{k-1}, F \left( z_{k} \right) - F \left( w_{k-1} \right) \right\rangle \right] \\
&\quad + \gamma^4 L^2 c \left[ \big( \left( 2 - s \right)\left( k + 1 \right) + 2 \left( \alpha - c \right)\big) \sqrt{ \left( 2 - s \right)\left( k + 1 \right) + 2 \left( \alpha - c \right)} \left\lVert v_{k+1} - v_{k} \right\rVert ^{2}\right.\\
&\left. \qquad \qquad \qquad
- \big( \left( 2 - s \right)k + 2 \left( \alpha - c \right)\big) \sqrt{ \left( 2 - s \right)k + 2 \left( \alpha - c \right)} \left\lVert v_{k} - v_{k-1} \right\rVert ^{2} \right] \\
&\quad + \gamma^3L \left[\big( \left( 2 - s \right)\left( k + 1 \right) + 2 \left( \alpha - c \right)\big)(k + \alpha + 1 -c) \left\lVert v_{k+1} - v_{k} \right\rVert ^{2}\right.\\
&\left. \qquad \qquad \quad - \big( \left( 2 - s \right)k + 2 \left( \alpha - c \right)\big)(k + \alpha -c)  \left\lVert v_{k} - v_{k-1} \right\rVert ^{2} \right] .
\end{split}
\end{equation}
From \eqref{split:dE}, using the notations in \eqref{ene:const}, we have that for all $k \geq 2$
\begin{equation}\label{ene:dE}
\begin{split}
& \ \cE_{\lambda,s,k+1} - \cE_{\lambda,s,k} \\
= & \ \minus 4 \left( c - 1 \right) \lambda \gamma \left\langle z_{k+1} - z_{\ast}, v_{k+1} \right\rangle
+ 2\Bigl( \omega_{3} k +  \left( \lambda + 1 - \alpha \right) \left( \alpha + 1 \right) \Bigr) \left\lVert z_{k+1} - z_{k} \right\rVert ^{2} \\
& \ + 2 \gamma \left( \omega_{1} k + \omega_{2} \right) \left\langle z_{k+1} - z_{k}, v_{k+1} \right\rangle
- \gamma^{2} \left( k + \alpha \right)\big(\omega_{0}k + 2 \omega_{6}\big)\left\lVert v_{k+1} - v_{k} \right\rVert ^{2}\\
& \ - 2\gamma \left( k + \alpha \right) \big(\omega_{0}k + 2 \omega_{6} \big)\left\langle z_{k+1} - z_{k}, v_{k+1} - v_{k} \right\rangle \\
& \ + \gamma^{2} \big(\omega_4 k + \left( 1 - c \right)\left( 2c +s \right)+ s \omega_{6} \big) \left\lVert v_{k+1} \right\rVert ^{2}\\
\leq & \ \minus 4 \left( c - 1 \right) \lambda \gamma \left\langle z_{k+1} - z_{\ast}, v_{k+1} \right\rangle
+ 2 \omega_{3} k \left\lVert z_{k+1} - z_{k} \right\rVert ^{2}+ \gamma^{2} \big(\omega_4 k + s \omega_{6} \big) \left\lVert v_{k+1} \right\rVert ^{2} \\
& \ + 2 \gamma \left( \omega_{1} k + \omega_{2} \right) \left\langle z_{k+1} - z_{k}, v_{k+1} \right\rangle
- \gamma^{2} \left( k + \alpha \right)\big(\omega_{0}k + 2 \omega_{6} \big)\left\lVert v_{k+1} - v_{k} \right\rVert ^{2}\\
& \ - 2\gamma \left( k + \alpha \right) \big(\omega_{0}k + 2 \omega_{6} \big)\left\langle z_{k+1} - z_{k}, v_{k+1} - v_{k} \right\rangle,
\end{split}
\end{equation}
where the inequality follows from $0 \leq \lambda \leq \alpha - 1$ and $1 < c$.
Plugging \eqref{ene:dE} into \eqref{ene:dF} and using again \eqref{ene:const}, yields for all $k \geq 2$
\begin{equation}\label{ene:pre}
\begin{aligned}
& \ \cG_{\lambda,s,k+1}  - \cG_{\lambda,s,k} \\
\leq & \ \minus 4 \left( c - 1 \right) \lambda \gamma \left\langle z_{k+1} - z_{\ast}, v_{k+1} \right\rangle  - 2\gamma \left( k + \alpha \right) \big(\omega_{0}k + 2 \omega_{6} \big)\left\langle z_{k+1} - z_{k}, v_{k+1} - v_{k} \right\rangle\\
& \ - 2\gamma \left[ \big(\omega_{0}\left( k + 1 \right) + 2 \omega_{6}\big) \left( k + 1 \right) \left\langle z_{k+1} - z_{k}, F \left( z_{k+1} \right) - F \left( w_{k} \right) \right\rangle \right.\\
&\left. \qquad \quad - \big(\omega_{0} k + 2 \omega_{6}\big) k \left\langle z_{k} - z_{k-1}, F \left( z_{k} \right) - F \left( w_{k-1} \right) \right\rangle \right] \\
& \ + \gamma^4 L^2 c \left[ \big(\omega_{0} \left( k + 1 \right) + 2 \omega_{6}\big) \sqrt{\omega_{0} \left( k + 1 \right) + 2 \omega_{6}} \left\lVert v_{k+1} - v_{k} \right\rVert ^{2}\right.\\
&\left. \qquad \qquad \quad - \big(\omega_{0} k + 2 \omega_{6}\big) \sqrt{\omega_{0}k + 2 \omega_{6}} \left\lVert v_{k} - v_{k-1} \right\rVert ^{2} \right] \\
& \ + \gamma^3L \left[\big(\omega_{0} \left( k + 1 \right) + 2 \omega_{6} \big)(k + \alpha + 1 -c) \left\lVert v_{k+1} - v_{k} \right\rVert ^{2}\right.\\
&\left. \qquad \qquad - \big(\omega_{0} k + 2 \omega_{6}\big)(k + \alpha -c)  \left\lVert v_{k} - v_{k-1} \right\rVert ^{2} \right]\\
& \ + \left( 2 \left( \omega_{1} k + \omega_{2} \right) \gamma \left\langle z_{k+1} - z_{k}, v_{k+1} \right\rangle
+ 2 \omega_{3} k \left\lVert z_{k+1} - z_{k} \right\rVert ^{2}+ \gamma^{2} \big(\omega_4 k + s \omega_{6} \big) \left\lVert v_{k+1} \right\rVert ^{2}\right) \\
& \ - \gamma^{2} \left( k + \alpha \right)\big(\omega_{0} k + 2 \omega_{6} \big)\left\lVert v_{k+1} - v_{k} \right\rVert ^{2}.
\end{aligned}
\end{equation}
Our next aim is to derive upper estimates for the first two terms on the right-hand side of \eqref{ene:pre}, which will eventually simplify the following three terms.

On the one hand, from the Cauchy-Schwarz inequality and \eqref{split:Lip}, we have for all $k \geq 1$
\begin{equation}\label{ene:inn}
\begin{split}
& \ \minus 4 \left( c - 1 \right) \lambda \gamma \left\langle z_{k+1} - z_{\ast}, v_{k+1} \right\rangle = \minus 4 \left( c - 1 \right) \lambda \gamma \left\langle z_{k+1} - z_{\ast}, \xi_{k+1} + F \left( w_{k} \right) \right\rangle \\
= & \ \minus 4 \left( c - 1 \right) \lambda \gamma \left\langle z_{k+1} - z_{\ast}, \xi_{k+1} + F \left( z_{k+1} \right) \right\rangle + 4 \left( c - 1 \right) \lambda \gamma \left\langle z_{k+1} - z_{\ast}, F \left( z_{k+1} \right) - F \left( w_{k} \right) \right\rangle \\
\leq & \ \minus 4 \left( c - 1 \right) \lambda \gamma \left\langle z_{k+1} - z_{\ast}, \xi_{k+1} + F \left( z_{k+1} \right) \right\rangle + 4 \left( c - 1 \right) \lambda \gamma \left\lVert z_{k+1} - z_{\ast} \right\rVert \left\lVert F \left( z_{k+1} \right) - F \left( w_{k} \right) \right\rVert \\
\leq & \ \minus 4 \left( c - 1 \right) \lambda \gamma \left\langle z_{k+1} - z_{\ast}, \xi_{k+1} + F \left( z_{k+1} \right) \right\rangle + 4 \left( c - 1 \right) \lambda \gamma \left\lVert z_{k+1} - z_{\ast} \right\rVert \left\lVert v_{k+1} - v_{k} \right\rVert \\
\leq & \ \minus 4 \left( c - 1 \right) \lambda \gamma \left\langle z_{k+1} - z_{\ast}, \xi_{k+1} + F \left( z_{k+1} \right) \right\rangle\\
& \ + \frac{4 \left( c - 1 \right)^2}{ \left( k+1 \right) \sqrt{k + 1}} \lambda ^{2} \left\lVert z_{k+1} - z_{\ast} \right\rVert ^{2} +  \gamma^{2} \left( k+1 \right) \sqrt{k + 1} \left\lVert v_{k+1} - v_{k} \right\rVert ^{2} .
\end{split}
\end{equation}
On the other hand, the monotonicity of $M + F$, that $\xi_{k} \in M \left( z_{k} \right)$ and $\xi_{k+1} \in M \left( z_{k+1} \right)$, and \eqref{split:d-u}, yield for every $k \geq 2$
\begin{equation}\label{ene:mono}
\begin{split}
& \ \minus 2\gamma  \left( k + \alpha \right) \big(\omega_{0}k + 2\omega_{6}\big)\left\langle z_{k+1} - z_{k}, v_{k+1} - v_{k} \right\rangle \\
\leq & \ 2\gamma  \left( k + \alpha \right) \big(\omega_{0}k + 2 \omega_{6}\big) \left\langle z_{k+1} - z_{k}, \left(\xi_{k+1} + F \left( z_{k+1} \right) - v_{k+1} \right) - \left( \xi_{k} + F \left( z_{k} \right) - v_{k} \right) \right\rangle \\
= & \ 2\gamma  \left( k + \alpha \right) \big(\omega_{0}k + 2 \omega_{6}\big) \left\langle z_{k+1} - z_{k}, \left( F \left( z_{k+1} \right) - F \left( w_{k} \right) \right) - \left( F \left( z_{k} \right) - F \left( w_{k-1} \right) \right) \right\rangle \\
= & \ 2\gamma  \left( k + \alpha \right) \big(\omega_{0}k + 2 \omega_{6}\big) \left\langle z_{k+1} - z_{k}, F \left( z_{k+1} \right) - F \left( w_{k} \right) \right\rangle \\
& - 2\gamma  \left( k + \alpha \right) \big(\omega_{0}k + 2 \omega_{6}\big)  \left\langle z_{k+1} - z_{k}, F \left( z_{k} \right) - F \left( w_{k-1} \right) \right\rangle \\
= & \ 2\gamma  \left( k + \alpha \right) \big(\omega_{0}k + 2 \omega_{6}\big) \left\langle z_{k+1} - z_{k}, F \left( z_{k+1} \right) - F \left( w_{k} \right) \right\rangle\\
& \ - 2\gamma  \big(\omega_{0}k + 2 \omega_{6}\big) k \left\langle z_{k} - z_{k-1}, F \left( z_{k} \right) - F \left( w_{k-1} \right) \right\rangle \\
& \ +2\gamma^2  c \big(\omega_{0}k + 2 \omega_{6}\big) \left\langle v_{k+1}, F \left( z_{k} \right) - F \left( w_{k-1} \right) \right\rangle \\
& \ + 2\gamma^2  \left( k + \alpha - c \right) \big(\omega_{0}k + 2 \omega_{6} \big) \left\langle v_{k+1} - v_{k}, F \left( z_{k} \right) - F \left( w_{k-1} \right) \right\rangle \\
= & \ 2\gamma \left[ \big(\omega_{0}\left( k + 1 \right) + 2 \omega_{6}\big) \left( k + 1 \right) \left\langle z_{k+1} - z_{k}, F \left( z_{k+1} \right) - F \left( w_{k} \right) \right\rangle \right.\\
& \left. \qquad -\big(\omega_{0}k + 2 \omega_{6}\big) k \left\langle z_{k} - z_{k-1}, F \left( z_{k} \right) - F \left( w_{k-1} \right) \right\rangle  \right]\\
& \ + 2\gamma  \left(\omega_{5} \left(k + 1\right) + \omega_{7} \right)  \left\langle z_{k+1} - z_{k}, F \left( z_{k+1} \right) - F \left( w_{k} \right) \right\rangle \\
& \ +2\gamma^2  c \big(\omega_{0}k + 2 \omega_{6}\big) \left\langle v_{k+1}, F \left( z_{k} \right) - F \left( w_{k-1} \right) \right\rangle \\
& \ + 2\gamma^2  \left( k + \alpha - c \right) \big(\omega_{0}k + 2 \omega_{6}\big) \left\langle v_{k+1} - v_{k}, F \left( z_{k} \right) - F \left( w_{k-1} \right) \right\rangle,
\end{split}
\end{equation}
where we use that $\omega_{7}=(2 \omega_{6}-\omega_{0})(\alpha - 1)$.

By Young's inequality and \eqref{split:Lip}, using that $\omega_{5}, \omega_{7}>0$, we obtain for all $k \geq 2$
\begin{equation}\label{ene:Young:1}
\begin{split}
& \ 2\gamma  \Big(\omega_{5} \left( k + 1 \right) + \omega_{7}\Big) \left\langle z_{k+1} - z_{k}, F \left( z_{k+1} \right) - F \left( w_{k} \right) \right\rangle \\
\leq & \ \sqrt{\omega_{5}\left( k + 1 \right) + \omega_{7}} \Bigl( \left\lVert z_{k+1} - z_{k} \right\rVert ^{2}+ \gamma^{2}\left(\omega_{5}\left( k + 1 \right) + \omega_{7}\right) \left\lVert F \left( z_{k+1} \right) - F \left( w_{k} \right) \right\rVert ^{2} \Bigr) \\
\leq & \ \sqrt{\omega_{5} \left( k + 1 \right) + \omega_{7}} \Bigl( \left\lVert z_{k+1} - z_{k} \right\rVert ^{2} + \gamma^{2}\left(\omega_{5} \left( k + 1 \right) + \omega_{7} \right) \left\lVert v_{k+1}  - v_{k}  \right\rVert ^{2} \Bigr).
\end{split}
\end{equation}
In addition, by using \eqref{split:Lip}, for all $k \geq 2$ we derive
\begin{equation*}
\begin{split}
& \ 2 \gamma^2 c\big(\omega_{0} k+ 2 \omega_{6}\big) \left\langle v_{k+1}, F \left( z_{k} \right) - F \left( w_{k-1} \right) \right\rangle \\
\leq & \ \gamma^2 c \left(  \big(\omega_{0}k + 2 \omega_{6}\big)\sqrt{\omega_{0}k + 2 \omega_{6}} \left\lVert F \left( z_{k} \right) - F \left( w_{k-1} \right) \right\rVert ^{2} +\sqrt{\omega_{0}k + 2 \omega_{6}} \left\lVert v_{k+1} \right\rVert ^{2}\right) \\
\leq & \ \gamma^2 c \left(  \gamma^2 L^2\big(\omega_{0}k + 2 \omega_{6}\big)\sqrt{\omega_{0}k + 2 \omega_{6}} \left\lVert v_{k}  - v_{k-1} \right\rVert ^{2} +\sqrt{\omega_{0}k + 2 \omega_{6}} \left\lVert v_{k+1} \right\rVert ^{2}\right) \\
= & \  - \gamma^4 L^2 c \left[ \big(\omega_{0}\left( k + 1 \right) + 2 \omega_{6}\big) \sqrt{\omega_{0}\left( k + 1 \right) + 2 \omega_{6}}\left\lVert v_{k+1} - v_{k} \right\rVert ^{2}\right.\\
&\left. \qquad \qquad \quad
- \big(\omega_{0}k + 2 \omega_{6}\big) \sqrt{\omega_{0}k + 2 \omega_{6}} \left\lVert v_{k} - v_{k-1} \right\rVert ^{2} \right]\\
& \ +\gamma^4 L^2 c \big(\omega_{0}\left( k + 1 \right) + 2 \omega_{6}\big) \sqrt{\omega_{0}\left( k + 1 \right) + 2 \omega_{6}} \left\lVert v_{k+1} - v_{k} \right\rVert ^{2} +\gamma^2 c\sqrt{\omega_{0}k + 2 \omega_{6}} \left\lVert v_{k+1} \right\rVert ^{2},
\end{split}
\end{equation*}
and
\begin{equation}\label{ene:C-S}
\begin{split}
& \ 2\gamma^2 \big(\omega_{0}k + 2 \omega_{6}\big)\left(k+\alpha - c\right) \left\langle v_{k+1} - v_{k}, F \left( z_{k} \right) - F \left( w_{k-1} \right) \right\rangle \\
\leq & \ 2\gamma^3 L \big(\omega_{0}k + 2 \omega_{6}\big)\left(k+\alpha - c\right) \left\lVert v_{k+1} - v_{k} \right\rVert \left\lVert v_{k} - v_{k-1} \right\rVert \\
\leq & \ \gamma^3 L \big(\omega_{0}k + 2 \omega_{6}\big)\left(k+\alpha - c\right) \left( \left\lVert v_{k+1} - v_{k} \right\rVert ^{2} + \left\lVert v_{k} - v_{k-1} \right\rVert ^{2} \right) \\
\leq & \ \minus \gamma^3 L \left[\big(\omega_{0} \left(k + 1 \right) + 2 \omega_{6} \big)\left(k+ 1 + \alpha - c\right)  \left\lVert v_{k+1} - v_{k} \right\rVert ^{2} \right.\\
&\qquad \qquad  \left.- \big(\omega_{0}k + 2 \omega_{6}\big)\left(k + \alpha - c\right)\left\lVert v_{k} - v_{k-1} \right\rVert ^{2}  \right] \\
& \ + 2\gamma^3 L \big(\omega_{0}\left(k + 1 \right) + 2 \omega_{6}\big)\left(k+ 1 + \alpha - c\right) \left\lVert v_{k+1} - v_{k} \right\rVert ^{2}.
\end{split}
\end{equation}
By plugging \eqref{ene:Young:1} and \eqref{ene:C-S} into \eqref{ene:mono}, and then combining the resulting estimate with \eqref{ene:inn}, we obtain for all $k \geq 2$
\begin{equation}\label{ene:sum}
\begin{split}
& \ \minus 4 \left( c - 1 \right) \lambda \gamma \left\langle z_{k+1} - z_{\ast}, v_{k+1} \right\rangle
- 2\gamma \left( k + \alpha\right)\big(\omega_{0}k +2 \omega_{6}\big)  \left\langle z_{k+1} - z_{k}, v_{k+1} - v_{k} \right\rangle \\
\leq & \  2\gamma \left[ \big(\omega_{0}\left( k + 1 \right) + 2 \omega_{6}\big) \left( k + 1 \right) \left\langle z_{k+1} - z_{k}, F \left( z_{k+1} \right) - F \left( w_{k} \right) \right\rangle \right.\\
&\left. \qquad - \big(\omega_{0}k + 2 \omega_{6}\big) k \left\langle z_{k} - z_{k-1}, F \left( z_{k} \right) - F \left( w_{k-1} \right) \right\rangle \right] \\
& \ - \gamma^4 L^2 c \left[ \big(\omega_{0}\left( k + 1 \right) + 2 \omega_{6}\big) \sqrt{\omega_{0}\left( k + 1 \right) + 2 \omega_{6}} \left\lVert v_{k+1} - v_{k} \right\rVert ^{2}\right.\\
&\left. \qquad \qquad \quad
- \big(\omega_{0}k + 2 \omega_{6}\big) \sqrt{\omega_{0}k + 2 \omega_{6}} \left\lVert v_{k} - v_{k-1} \right\rVert ^{2} \right] \\
& \ - \gamma^3L \left[\big(\omega_{0} \left( k + 1 \right) + 2 \omega_{6} \big)(k + \alpha + 1 -c) \left\lVert v_{k+1} - v_{k} \right\rVert ^{2}\right.\\
&\left. \qquad \qquad
- \big(\omega_{0}k + 2 \omega_{6}\big)(k + \alpha -c)  \left\lVert v_{k} - v_{k-1} \right\rVert ^{2} \right] \\
& \ - 4 \left( c - 1\right) \lambda \gamma \left\langle z_{k+1} - z_{\ast}, \xi_{k+1} + F \left( z_{k+1} \right) \right\rangle + \frac{4\left( c - 1 \right)^2}{ \left( k+1 \right)\sqrt{ k+1}} \lambda ^{2} \left\lVert z_{k+1} - z_{\ast} \right\rVert ^{2}\\
& \ +  \Big( \left( k + \alpha\right)\big(\omega_{0}k + 2 \omega_{6}\big) - \mu_{k} \Big) \gamma^{2} \left\lVert v_{k+1} - v_{k} \right\rVert ^{2}  + \sqrt{\omega_{5}(k + 1) + \omega_{7}} \left\lVert z_{k+1} - z_{k} \right\rVert ^{2} \\
& \ + \gamma^{2}c \sqrt{\omega_{0}k + 2 \omega_{6}} \left\lVert v_{k+1} \right\rVert ^{2},
\end{split}
\end{equation}
where
\begin{equation*}
\begin{split}
\mu_{k}  = & \ \left( k + \alpha \right)\big(\omega_{0} k + 2 \omega_{6} \big) - 2\gamma L\big(\omega_{0} \left( k + 1\right) + 2 \omega_{6} \big)( k + 1 + \alpha -c)\\
& \ - (k + 1)\sqrt{k + 1} -(\omega_{5}(k+1) + \omega_{7})\sqrt{\omega_{5}(k+1) + \omega_{7}} \\
& \ -\gamma^2 L^2 c\big(\omega_{0}(k + 1) + 2 \omega_{6}\big)\sqrt{\omega_{0}(k + 1) + 2 \omega_{6}} \\
= & \ \omega_{0}(1 - 2\gamma L)k^2 + \big(2 \omega_{6} + \omega_{0}\alpha \big)k +  2\omega_{6}\alpha \\
& \  - 2 \gamma L\Big(\big( 2\left(\omega_{0} + 2 \omega_{6}\right) - s \omega_{6}\big)k + \big(\omega_{0} + 2 \omega_{6}\big)(\alpha +1 -c)\Big) - (k + 1)\sqrt{k + 1}\\
& \  -(\omega_{5}(k+1) + \omega_{7})\sqrt{\omega_{5}(k+1) + \omega_{7}} -\gamma^2 L^2 c\big(\omega_{0}(k + 1) + 2 \omega_{6}\big)\sqrt{\omega_{0}(k + 1) + 2 \omega_{6}}.
\end{split}
\end{equation*}
Finally, by summing \eqref{ene:pre} and \eqref{ene:sum}, we get the estimate we want.

(ii) We observe that for all $k \geq 1$
\begin{equation}
\begin{split}
& \ 2 \omega_{0} \lambda \gamma k \left\langle z_{k} - z_{\ast}, v_{k} \right\rangle
+ \dfrac{1}{2}  \omega_{0}s  \gamma^{2} k^{2} \left\lVert v_{k} \right\rVert ^{2} \nonumber\\
= & \  \frac{1}{s}\omega_{0}  \left( 2s \lambda \gamma k \left\langle z_{k} - z_{\ast}, v_{k} \right\rangle + \dfrac{ 1}{2 } s^2 \gamma^{2} k^{2} \left\lVert v_{k} \right\rVert ^{2} \right) \nonumber \\
= & \ \frac{1}{s} \omega_{0} \left( \dfrac{1}{2} \left\lVert 2 \lambda \left( z_{k} - z_{\ast} \right) + s\gamma k v_{k} \right\rVert ^{2} - 2 \lambda^{2} \left\lVert z_{k} - z_{\ast} \right\rVert ^{2} \right)
\end{split}
\end{equation}
and
\begin{equation*}
\begin{split}
& \ 4\lambda \gamma \omega_{6} \left\langle z_{k} - z_{\ast}, v_{k} \right\rangle
+ \frac{s\alpha\gamma^2 \omega_{6}^2}{\alpha - 2} \left\lVert v_{k} \right\rVert ^{2}  \\
= & \ 2 \lambda \gamma \omega_{6} \left(2 \left\langle z_{k} - z_{\ast}, v_{k} \right\rangle
+ \frac{s\alpha\gamma \omega_{6}}{2\left(\alpha - 2\right)\lambda} \left\lVert v_{k} \right\rVert ^{2} \right)  \\
= & \ 2\lambda \gamma \omega_{6} \left( \left\lVert \sqrt{\frac{2\left(\alpha - 2\right)\lambda}{s\alpha\gamma \omega_{6}}} \left( z_{k} - z_{\ast} \right) + \sqrt{\frac{s\alpha\gamma \omega_{6}}{2\left(\alpha - 2\right)\lambda}} v_{k}\right\rVert ^{2}   - \frac{2\left(\alpha - 2\right)\lambda}{s\alpha\gamma \omega_{6}} \left\lVert z_{k} - z_{\ast} \right\rVert ^{2} \right).
\end{split}
\end{equation*}
By using the identity
\begin{equation*}
\left\lVert x \right\rVert ^{2} + \left\lVert y \right\rVert ^{2}
= \frac{1}{2} \left( \left\lVert x + y \right\rVert ^{2} + \left\lVert x - y \right\rVert ^{2} \right)
\quad \forall x, y \in \cH,
\end{equation*}
we obtain for all $k \geq 1$
\begin{align*}
\cE_{\lambda,s, k} = & \ \dfrac{1}{2} \left\lVert u_{\lambda,s,k} \right\rVert ^{2} + 2 \lambda \left( \alpha - 1 - \lambda \right) \left\lVert z_{k} - z_{\ast} \right\rVert ^{2} + 2 \lambda \gamma \left(\omega_{0} k+ 2 \omega_{6} \right) \left\langle z_{k} - z_{\ast}, v_{k} \right\rangle\\
& \ + \frac{1}{2}\gamma^2\left(\omega_{0} k + 2 \omega_{6}\right) \left( s k + 2 c \right) \left\lVert v_{k} \right\rVert^{2}\\
= & \ \dfrac{1}{2} \left\lVert 2 \lambda \left( z_{k} - z_{\ast} \right) + 2k \left( z_{k} - z_{k-1} \right) + s \gamma k v_{k} \right\rVert^{2}\\
& \ + 2  \lambda \left( \alpha - 1 - \dfrac{\alpha(\omega_{0}+s+2)-4}{s\alpha}\lambda \right)  \left\lVert z_{k} - z_{\ast} \right\rVert^{2}
+ \frac{1}{2s} \omega_{0}  \left\lVert 2 \lambda \left( z_{k} - z_{\ast} \right) + s\gamma k v_{k} \right\rVert^{2}  \\
& \ + 2\lambda \gamma \omega_{6} \left\lVert \sqrt{\frac{2\left(\alpha - 2\right)\lambda}{s\alpha\gamma\omega_{6}}} \left( z_{k} - z_{\ast} \right) + \sqrt{\frac{s\alpha\gamma \omega_{6}}{2\left(\alpha - 2\right)\lambda}} v_{k}\right\rVert^{2}  \\
& \ + \frac{1}{2}\gamma^2 \left(2\big(\omega_{0} c+ s \omega_{6}\big) k + 4\omega_{6}c - \frac{2s\alpha \omega_{6}^2}{\alpha -2} \right)\left\lVert v_{k} \right\rVert ^{2}  \\
= & \ \dfrac{s - \omega_0}{2s} \left\lVert 2 \lambda \left( z_{k} - z_{\ast} \right) + 2k \left( z_{k} - z_{k-1} \right) + s \gamma k v_{k} \right\rVert ^{2} \\
& \ + 2  \lambda \left( \alpha - 1 - \dfrac{\alpha(\omega_{0}+s+2)-4}{s\alpha}\lambda \right)  \left\lVert z_{k} - z_{\ast} \right\rVert ^{2}
+ \frac{1}{s}\omega_{0} k^2 \left\lVert  z_{k} - z_{k-1} \right\rVert ^{2}  \\
& \ +\frac{1}{4s} \omega_{0} \left\lVert 4 \lambda \left( z_{k} - z_{\ast} \right) + 2k \left( z_{k} - z_{k-1} \right) + 2s \gamma k v_{k} \right\rVert ^{2}\\
& \ + 2 \lambda \gamma \omega_{6}  \left\lVert \sqrt{\frac{2\left(\alpha - 2\right)\lambda}{s\alpha\gamma \omega_{6}}} \left( z_{k} - z_{\ast} \right) + \sqrt{\frac{s\alpha\gamma \omega_{6}}{2\left(\alpha - 2\right)\lambda}} v_{k}\right\rVert ^{2}  \\
& \ + \gamma^2\left(\big(\omega_{0} c+ s \omega_{6} \big)  k +  2 c\omega_{6} -\frac{s\alpha  \omega_{6}^2}{\alpha -2} \right)\left\lVert v_{k} \right\rVert ^{2}.
\end{align*}
Consequently, for all $k \geq 2$
\begin{align*}
\cG_{\lambda,s,k}
= & \ \cE_{\lambda,s,k}- 2\gamma \big(\omega_{0}k + 2 \omega_{6}\big) k \left\langle z_{k} - z_{k-1}, F \left( z_{k} \right) - F \left( w_{k-1} \right) \right\rangle \\
& \ + \gamma^4 L^2 c  \big(\omega_{0}k + 2 \omega_{6}\big) \sqrt{\omega_{0}k + 2 \omega_{6}} \left\lVert v_{k} - v_{k-1} \right\rVert ^{2} + \gamma^3L \big(\omega_{0}k + 2 \omega_{6}\big)(k + \alpha -c)  \left\lVert v_{k} - v_{k-1} \right\rVert ^{2}   \nonumber \\
= & \ \dfrac{s - \omega_0}{ 2s} \left\lVert 2 \lambda \left( z_{k} - z_{\ast} \right) + 2k \left( z_{k} - z_{k-1} \right) + s \gamma k v_{k} \right\rVert ^{2} \\
& \ + 2  \lambda \left( \alpha - 1 - \dfrac{\alpha(\omega_{0}+s+2)-4}{s\alpha}\lambda \right)  \left\lVert z_{k} - z_{\ast} \right\rVert ^{2}
+ \frac{1}{s} \omega_{0} k^2 \left\lVert  z_{k} - z_{k-1} \right\rVert ^{2}  \\
& \ +\frac{1}{4s} \omega_{0} \left\lVert 4 \lambda \left( z_{k} - z_{\ast} \right) + 2k \left( z_{k} - z_{k-1} \right) + 2s \gamma k v_{k} \right\rVert ^{2}\\
& \ + 2\lambda \gamma \omega_{6}  \left\lVert \sqrt{\frac{2\left(\alpha - 2\right)\lambda}{s\alpha\gamma\omega_{6}}} \left( z_{k} - z_{\ast} \right) + \sqrt{\frac{s\alpha\gamma \omega_{6}}{2\left(\alpha - 2\right)\lambda}} v_{k}\right\rVert ^{2}\\
& \ + \gamma^2\left(\big(\omega_{0} c+ s \omega_{6}\big)  k +  2c \omega_{6} -\frac{s\alpha \omega_{6}^2}{\alpha -2} \right)\left\lVert v_{k} \right\rVert ^{2}\\
& \ - 2\gamma \big(\omega_{0}k + 2 \omega_{6}\big) k \left\langle z_{k} - z_{k-1}, F \left( z_{k} \right) - F \left( w_{k-1} \right) \right\rangle \\
& \ + \gamma^4 L^2 c  \big(\omega_{0}k + 2 \omega_{6}\big) \sqrt{ \omega_{0}k + 2 \omega_{6}} \left\lVert v_{k} - v_{k-1} \right\rVert ^{2} + \gamma^3L \big(\omega_{0}k + 2 \omega_{6}\big)(k + \alpha -c)  \left\lVert v_{k} - v_{k-1} \right\rVert ^{2}.
\end{align*}
We use relation \eqref{split:Lip} and $0< \gamma < \frac{1}{2L}$ to verify that for all $k \geq 2$
\begin{align*}
& \ \dfrac{1}{2} \omega_{0} k^2 \left\lVert z_{k} - z_{k-1} \right\rVert ^{2} - 2 \omega_{0} \gamma k^2 \left\langle z_{k} - z_{k-1}, F \left( z_{k} \right) - F \left( w_{k-1} \right) \right\rangle+ \omega_{0} \gamma^{3} L k^2 \left\lVert v_{k} - v_{k-1} \right\rVert ^{2} \nonumber\\
\geq & \ \omega_{0} k^2\left(\dfrac{1}{2} \left\lVert z_{k} - z_{k-1} \right\rVert ^{2} - 2 \gamma \left\langle z_{k} - z_{k-1}, F \left( z_{k} \right) - F \left( w_{k-1} \right) \right\rangle + 2\gamma^2 \left\lVert F \left( z_{k} \right) - F \left( w_{k-1} \right) \right\rVert ^{2}\right) \nonumber\\
\geq & \ 0
\end{align*}
and
\begin{align*}
& \ \dfrac{1}{4s}\omega_{0}^2k^2 \left\lVert z_{k} - z_{k-1} \right\rVert ^{2} - 4 \omega_{6}\gamma k \left\langle z_{k} - z_{k-1}, F \left( z_{k} \right) - F \left( w_{k-1} \right) \right\rangle + \dfrac{16\omega_{6}^2  s}{\omega_{0}^2} \gamma^4 L^2 \left\lVert v_{k} - v_{k-1} \right\rVert ^{2} \\
\geq & \ \dfrac{1}{4s}\omega_{0}^2k^2 \left\lVert z_{k} - z_{k-1} \right\rVert ^{2} - 4 \omega_{6} \gamma k \left\langle z_{k} - z_{k-1}, F \left( z_{k} \right) - F \left( w_{k-1} \right) \right\rangle + \dfrac{16 \omega_{6}^2  s}{\omega_{0}^2} \gamma^2  \left\lVert F(z_{k}) - F(w_{k-1}) \right\rVert ^{2}\\
\geq & \ 0.
\end{align*}
Since $\omega_{0}=2-s$,
\begin{equation*}
\frac{s-\omega_0}{2s}=\frac{s-1}{s}>0, \quad \frac{\alpha(\omega_{0}+s+2)-4}{s\alpha}=\frac{4(\alpha-1)}{s\alpha},
\end{equation*}
and
\begin{equation*}
\frac{1}{s} \omega_0 = \frac{1}{2} \omega_0 + \frac{1}{2s}\omega_0^2.
\end{equation*}
Therefore there exists a positive integer $k_1 \geq 2$ such that for all $k \geq k_1$ it holds
\begin{align*}
\cG_{\lambda,s,k}
\geq & \ \frac{1}{4s} \omega_{0} \left\lVert 4 \lambda \left( z_{k} - z_{\ast} \right) + 2k \left( z_{k} - z_{k-1} \right) + 2s \gamma k v_{k} \right\rVert ^{2} \nonumber \\
& \ + \frac{1}{4s} \omega_{0}^2k^{2} \left\lVert z_{k} - z_{k-1} \right\rVert ^{2}+ 2  \lambda \left( \alpha - 1 -\dfrac{4(\alpha-1)}{s\alpha}\lambda \right)\left\lVert z_{k} - z_{\ast} \right\rVert ^{2},
\end{align*}
which is the desired inequality.
\qedhere
\end{proof}

\begin{proof}[Proof of Lemma \ref{lem:trunc}]
(i) For $k \geq 1$ and the quadratic expression in $R_{k}$ we calculate
\begin{align*}
\dfrac{\Delta_{k}'}{4 \gamma^{2}} = & \ \left( \omega_{1} k + \omega_{2} \right) ^{2} - 2\delta^2\left( \omega_{3} k + \sqrt{\omega_{5}(k + 1) + \omega_{7}} \right) \Bigl( \omega_{4} k    + c\sqrt{\omega_{0}k + 2 \omega_{6}} + s \omega_{6}\Bigr) \nonumber \\
= & \ \left( \omega_{1}^{2} - 2\delta^2 \omega_{3} \omega_{4} \right) k^{2} - 2\delta^2 \Bigl( \omega_{3}\bigl( c\sqrt{\omega_{0}k + 2 \omega_{6}} + s \omega_{6}\bigr)+ \omega_4 \sqrt{\omega_{5}(k+1) + \omega_{7}} \Bigr)k \\
& \ + 2 \omega_1\omega_2 k + \omega_2^2-2\delta^2\Bigl( c\sqrt{\omega_{0}k + 2 \omega_{6}} + s \omega_{6}\Bigr)\sqrt{\omega_{5}(k+1) + \omega_{7}}.
\end{align*}
Since $\left( \omega_{1}^{2} - 2\delta^2 \omega_{3} \omega_{4} \right) k^{2}$ is the dominant term in the above polynomial, it suffices to guarantee that
\begin{equation}
\label{nonneg:gamma}
\omega_{1}^{2} - 2\delta^2 \omega_{3} \omega_{4} < 0,
\end{equation}
to ensure the existence of some integer $k_{\lambda} \geq 1$ such that $\Delta_{k}' \leq 0$ and $\omega_{4} k    + c\sqrt{\omega_{0}k + 2 \omega_{6}} + s \omega_{6} < 0$ for all $k \geq  k_{\lambda}$. By Lemma~\ref{lem:quad}~(ii),  from here it would follow that $R_{k} \leq 0$ for all $k \geq  k_{\lambda}$.

It remains to show that there exists a choice of $0 \leq \lambda \leq \alpha-1$ for which \eqref{nonneg:gamma} is true. We set $\xi := \lambda + 1 - \alpha \leq 0$ and get
\begin{align*}
\omega_{1}
&= (2-s)\lambda +s  \left( \lambda + 1 - \alpha \right) + s - 2c = 2\xi + \left(\alpha-1 \right)\left( 2 -s \right) +s - 2c, \\
\omega_{3} \omega_{4} 	& = 4s \left( 1 - c \right)  \left( \lambda + 1 - \alpha \right) = - 4s\left( c - 1 \right)  \xi.
\end{align*}
This means that we have in fact to guarantee that there exists a choice for $\xi \leq 0$ such that
\begin{align}\label{trunc:omega-a}
& \ \omega_{1}^{2} - 2\delta^2 \omega_{3} \omega_{4} \nonumber \\
 = & \ \big( 2  \xi + \left(\alpha - 1 \right) \left( 2 - s \right) + s - 2c  \big)^2  + 8 s \left( c - 1 \right)  \xi\delta^2  \nonumber \\
= & \ 4\xi^2 + 4\big( \left(\alpha - 1 \right) \left( 2 - s \right)+ s - 2c +2s\left( c - 1 \right)\delta^2 \big)\xi + \big( \left(\alpha - 1 \right)\left( 2 - s \right) + s - 2c\big)^2 < 0.
\end{align}
A direct computation shows that according to \eqref{condi:delta}
\begin{align*}
\Delta_{\xi}
 = & \ 16\big( \left(2 - s \right)\left( \alpha - 1\right)+ s - 2c +2s\left( c - 1 \right)\delta^2 \big)^2 - 16\big( \left(\alpha - 1 \right)\left( 2 - s \right) + s - 2c\big)^2\\
= & \ 64s \left( c - 1 \right)  \delta^2  \big( \left(2 - s \right)\left( \alpha - 1\right)+ s - 2c +s\left( c - 1 \right)\delta^2 \big)   > 0.
\end{align*}
Hence, in order to get \eqref{trunc:omega-a}, we have to choose $\xi$ between the two roots of the quadratic function arising in this formula, in other words
\begin{align*}
\xi_{1} \left( \alpha,c,s \right) & := \dfrac{1}{8 } \left( - 4  \big( \left(2 - s \right)\left( \alpha - 1\right)+ s - 2c +2s\left( c - 1 \right)\delta^2 \big)  - \sqrt{\Delta_{\xi}} \right) \nonumber \\
& < \xi = \lambda + 1 - \alpha\nonumber \\
&< \xi_{2} \left( \alpha,c,s \right) := \dfrac{1}{8 } \left( - 4  \big( \left(2 - s \right)\left( \alpha - 1\right)+ s - 2c +2s\left( c - 1 \right)\delta^2 \big)  + \sqrt{\Delta_{\xi}} \right) .
\end{align*}
Obviously $\xi_{1} \left( \alpha,c,s \right) < 0$ and from Vieta's formula $\xi_{1} \left( \alpha,c,s\right) \cdot \xi_{2} \left( \alpha,c,s \right) = \frac{\big( \left(\alpha - 1 \right)\left( 2 - s \right) + s - 2c\big)^2}{4 }>0$, it follows that $\xi_{2} \left( \alpha,c,s \right) < 0$.

Therefore, going back to $\lambda$, in order to be sure that $\omega_{1}^{2} - 2\delta^2\omega_{3} \omega_{4} < 0$ this must be chosen such that
\begin{equation*}
\alpha - 1 + \xi_{1} \left( \alpha,c,s \right) < \lambda < \alpha - 1 + \xi_{2} \left( \alpha,c,s \right) .
\end{equation*}
Next, we will show that
\begin{equation}
\label{trunc:check}
0 < \alpha - 1 -  \frac{1}{2}  \big( \left(2 - s \right)\left( \alpha - 1\right)+ s - 2c +2s\left( c - 1 \right)\delta^2 \big)  < \frac{\alpha s}{4}.
\end{equation}
Indeed, the inequality on the left-hand side follows immediately, since
\begin{equation*}
0 < \alpha - 1 -  \frac{1}{2}  \big( \left(2 - s \right)\left( \alpha - 1\right)+ s - 2c +2s\left( c - 1 \right)\delta^2 \big)\,\, \Longleftrightarrow \,\,\delta^2< \frac{ s \left( \alpha - 2 \right)+ 2c}{2s \left( c - 1 \right)},
\end{equation*}
which is true according to \eqref{condi:s} and \eqref{condi:delta}. On the other hand, for the inequality on the right-hand side of \eqref{trunc:check} we have
\begin{equation*}
\alpha - 1 -  \frac{1}{2}  \big( \left(2 - s \right)\left( \alpha - 1\right)+ s - 2c +2s\left( c - 1 \right)\delta^2 \big)  < \frac{\alpha s}{4}\,\,\Longleftrightarrow\,\,\delta^2> \frac{s\left(\alpha - 2\right) + 2\left(2c - s \right)}{4s\left(c - 1\right)},
\end{equation*}
which is true according to \eqref{condi:delta}.
From \eqref{trunc:check} we immediately deduce that
\begin{equation*}
0 < \alpha - 1 + \xi_{2} \left( \alpha,c,s \right)
\quad \textrm{ and } \quad
\alpha - 1 + \xi_{1} \left( \alpha,c,s \right) < \frac{\alpha s}{4},
\end{equation*}
which allows us to choose
\begin{equation*}
\underline{\lambda}\left( \alpha, c,s\right):=\max\left\{0,\alpha - 1 + \xi_{1} \left( \alpha,c,s \right)\right\}<\overline{\lambda}\left( \alpha,c,s \right):=\min\left\{\frac{\alpha s}{4},\alpha - 1 + \xi_{2} \left( \alpha,c,s \right)\right\}.
\end{equation*}
In conclusion, for $\lambda$ chosen to satisfy $\underline{\lambda}\left( \alpha, c,s\right)<\lambda <\overline{\lambda}\left( \alpha,c,s \right)$, we have
\begin{equation*}
\omega_{1}^2-2\delta^2 \omega_2\omega_4< 0,
\end{equation*}
and therefore there exists some integer ${ k_{\lambda}} \geq 1$ such that $R_k \leq 0$ for all $k\geq k_{\lambda}$.

(ii) For every $k \geq 2$ we have
\begin{equation*}
\begin{split}
\mu_{k} = & \ \omega_{0} \left(1 - 2\gamma L\right)k^2 + \left(2 \omega_{6} + \omega_{0}\alpha \right)k + 2 \omega_{6} \alpha  \\
& \ - 2 \gamma L\left(\left( 2(\omega_{0} + 2 \omega_{6})- s \omega_{6}\right)k + \left(\omega_{0} + 2 \omega_{6}\right)(\alpha +1 -c)\right)  - (k + 1)\sqrt{k + 1}\\
& \  -(\omega_{5}(k+1) + \omega_{7})\sqrt{\omega_{5}(k+1) + \omega_{7}} -\gamma^2 L^2 c\big(\omega_{0}(k + 1) + 2 \omega_{6}\big)\sqrt{\omega_{0}(k + 1) + 2 \omega_{6}},
\end{split}
\end{equation*}
and the conclusion follows since $\omega_{0} >0 $ and $\gamma < \frac{1}{2L}$.
\qedhere
\end{proof}

\end{document}